\definecolor{red}{rgb}{1,0,0}
\definecolor{green}{rgb}{0,1,0}
\definecolor{blue}{rgb}{0,0,1}
\definecolor{refkey}{gray}{.625}
\definecolor{labelkey}{gray}{.625}
\let\oldmarginpar\marginpar
\renewcommand\marginpar[1]{\-\oldmarginpar[\raggedleft\footnotesize #1]%
{\raggedright\footnotesize #1}}
\def\dar[#1]{\ar@<2pt>[#1]\ar@<-2pt>[#1]}
\theoremstyle{plain}
\newtheorem{prop}{Proposition}[section]
\newtheorem{lem}[prop]{Lemma}
\newtheorem{cor}[prop]{Corollary}
\newtheorem{thm}[prop]{Theorem}
\newtheorem*{prop*}{Proposition}
\newtheorem*{lem*}{Lemma}
\newtheorem*{sublem*}{Sublemma}
\newtheorem*{cor*}{Corollaire}
\newtheorem*{thm*}{Theorem}
\newtheorem*{hypo*}{Hypothesis}
\newtheorem*{question*}{Question}
\newtheorem*{conjecture*}{Conjecture}
\newtheorem*{scholum*}{Scholum}
\newtheorem{defn}[prop]{Definition}
\newtheorem*{defn*}{Definition}
\newtheoremstyle{slanted}
  {3pt}
  {3pt}
  {\slshape}
  {}
  {\bfseries}
  {.}
  {.5em}
  {}
\theoremstyle{slanted}
\newtheorem{ex}[prop]{Example}
\newtheorem*{ex*}{Example}
\newtheorem*{exs*}{Examples}
\newtheorem{rmk}[prop]{Remark}
\newtheorem*{rmk*}{Remark}
\newtheorem*{rmks*}{Remarks}
\newtheorem*{notation*}{Notation}
\theoremstyle{definition}
\newtheorem*{con*}{Construction}
\newtheorem*{note*}{Note}
\theoremstyle{remark}
\newtheorem*{warning*}{Warning}
\newtheorem*{shortnote*}{Note}
\newtheorem*{claim*}{Claim}
\newtheorem*{axiom*}{Axiom}
\newcommand{\BT}[1]{\begin{thm}\label{T:#1}}
\newcommand{\ET}{\end{thm}}
\newcommand{\BL}[1]{\begin{lem}\label{L:#1}}
\newcommand{\EL}{\end{lem}}
\newcommand{\BC}[1]{\begin{cor}\label{C:#1}}
\newcommand{\EC}{\end{cor}}
\newcommand{\BP}[1]{\begin{prop}\label{P:#1}}
\newcommand{\EP}{\end{prop}}
\newcommand{\BD}[1]{\begin{defn}\label{D:#1}}
\newcommand{\ED}{\end{defn}}
\newcommand{\BE}[1]{\begin{ex}\label{E:#1}}
\newcommand{\EE}{\end{ex}}
\newcommand{\BR}[1]{\begin{rmk}\label{R:#1}}
\newcommand{\ER}{\end{rmk}}
\newcommand{\BM}[1]{\begin{equation}\label{M:#1}}
\newcommand{\EM}{\end{equation}}
\newcommand{\reals}{\mathbb{R}}
\newcommand{\thalf}{\tfrac{1}{2}}
\newcommand{\rond}{\circ}
\newcommand{\cinf}[1]{C^{\infty}(#1)}
\newcommand{\sections}[1]{ {\Gamma} (#1)}
\newcommand{\XX}{\mathfrak{X}} 
\newcommand{\OO}{\Omega} 
\newcommand{\isomorphism}{\cong}
\newcommand{\diese}{^{\sharp}}
\newcommand{\graded}{^{\scriptscriptstyle\bullet}}
\newcommand{\mfg}{\mathfrak{g}} 
\newcommand{\DD}{\mathcal{D}}
\newcommand{\ba}[2]{[#1,#2]}
\newcommand{\bas}[2]{[#1,#2]_*}
\newcommand{\ld}[1]{\mathcal{L}_{#1}}
\newcommand{\ii}[1]{\iota_{#1}}
\newcommand{\ip}[2]{\langle #1 , #2 \rangle}
\newcommand{\cb}[2]{\llbracket #1 , #2 \rrbracket}
\newcommand{\lb}[2]{[ #1 , #2 ]}
\newcommand{\db}[2]{#1 \circ #2}
\newcommand{\anchor}{\rho}
\newcommand{\anchors}{\anchor_*}
\newcommand{\dee}{d}
\newcommand{\dees}{\dee_*}
\newcommand{\lon }{\,\rightarrow\,}
\newcommand{\As}{A^*}
\newcommand{\dA}{d_A }
\newcommand{\LieDer}{\mathcal{L} }
\newcommand{\pairing}[2]{\left\langle #1  ~|~  #2 \right\rangle}
\newcommand{\inserts}{\iota}
\newcommand{\bookpairing}[1]{            {\Bigl\lfloor#1\Bigr\rfloor}}
\newcommand{\set}[1]{\left\{#1\right\}}
\newcommand{\defbe}{:=}
\newcommand{\Lambdabracket}[1]{\left [ #1\right ]_{\Lambda}}
\newcommand{\Lambdasharp}{\Lambda^\sharp}
\newcommand{\Ker}{\mathrm{Ker}}
\newcommand{\thetaalgebra}{\mathfrak{\theta}}
\newcommand{\thetaalgebrastar}{\mathfrak{\theta}^*}
\newcommand{\galgebra}{\mathfrak{g}}
\newcommand{\galgebrastar}{\mathfrak{g}^*}
\newcommand{\coadjoint}[1]{\mathrm{ad}^*_{#1}}
\newcommand{\phiUprotate }{\phi^{\scriptscriptstyle T}}
\newcommand{\Rmatrix}{\mathbf{r}}
\newcommand{\crossedmoduletriple}[3]{(#1\stackrel{#2}{\rightharpoonup}#3)}
\newcommand{\Ps}{P^*}
\newcommand{\Qs}{Q^*}
\newcommand{\moduleaction}{ \triangleright}
\newcommand{\symmetricproduct}{{\scriptstyle \odot}\,}
\newcommand{\tobefilledin}{\,\stackrel{\centerdot}{}\,}
\newcommand{\algebracrossedmodulemap}{\phi}
\begin{document}

\title{On Lie Bialgebroid Crossed Modules
\footnote{Research is partially supported by NSFC grants  11471179, 11301317, 11971282.}}

\author{\textsc{Honglei Lang} \\
{\small Department of Applied Mathematics}\\{\small China Agricultural University, Beijing 100083, China}\\
{\small
{\href{hllang@cau.edu.cn}{\texttt{hllang@cau.edu.cn
}}}}
\and\textsc{Yu Qiao} \\
{\small School of Mathematics and Information Science}\\
{\small Shaanxi Normal University}\\{\small Xi'an, China}\\
{\small
{\href{yqiao@snnu.edu.cn}{\texttt{yqiao@snnu.edu.cn
}}}}
\and
\textsc{Yanbin Yin\footnote{Corresponding author.}} \\
{\small School of Mathematics and Statistics}\\
{\small Henan University}\\{\small Kaifeng, China}\\
{\small
{\href{yyb@henu.edu.cn}{\texttt{yyb@henu.edu.cn
}}}}
 }

\date{}

\maketitle
\begin{abstract}
We study Lie bialgebroid crossed modules which are
 pairs  of Lie algebroid crossed modules in duality
 that canonically give rise to Lie bialgebroids.
 A one-one correspondence between such Lie bialgebroid crossed modules and co-quadratic Manin triples  $(K,P,Q)$ is established, where $K$ is a co-quadratic Lie algebroid and $(P,Q)$ is a pair of transverse Dirac structures in $K$.
\end{abstract}

 \tableofcontents

\section{Introduction}

The theory of Lie bialgebras and Poisson
Lie groups, due for the most part to V. G. Drinfel{\cprime}d and M. A. Semenov-Tyan-Shanski{\u\i}, dates back to the early 80's \cite{MR688240,MR725413}. A Lie bialgebra is a pair of Lie algebras $(\galgebra,\galgebrastar)$ in duality satisfying some compatible conditions. An interesting fact is that, for a Lie bialgebra $(\galgebra,\galgebrastar)$, there {is} a quadratic Lie algebra $ D=\galgebra\bowtie\galgebrastar$, namely the double Lie algebra, and both $\galgebra$ and $\galgebrastar$ serve as maximally coisotropic subalgebras in $D$.  In fact, $(D,\galgebra,\galgebrastar)$ is an example of Manin triples \cite{MR934283}. An important result is that Lie bialgebras are in one-one correspondence with Manin triples.

Lie bialgebroids, first introduced by  Mackenzie and Xu in \cite{MR1262213}, are a natural generalization of Lie bialgebras.
A significant difference is that, the double of a Lie bialgebroid $(A,\As)$ is not a quadratic Lie algebroid. Instead, there  {is} a Courant algebroid structure underlying $E=A\oplus \As$, and both $A$ and $\As$ are maximally isotropic and integrable (so called Dirac structures) in $E$ \cite{MR1472888}. Again, $(E,A,\As)$ forms a Manin triple, and the one-one correspondence between Lie biaglebroids and Manin triples still holds.

 {
The concept of Lie group crossed modules was proposed by Whitehead \cite{Whitehead1, Whitehead2} in 1940s. Since then, it has been widely used
in areas such as homotopy theory \cite{BHS}, group representations \cite{Brown1}, algebraic $K$-theory \cite{Loday}, and homological algebras \cite{Hue, Lue}. The category of Lie group crossed modules is equivalent to the category of group-groupoids \cite{BS}, and to the category of $cat^1$-groups \cite{Loday1}. Lie group crossed modules can be also viewed as 2-dimensional groups \cite{Brown}.  Lie algebra crossed modules also appeared in Gerstenhaber's work \cite{Gers}.
}

In this note, we will introduce a notion called ``Lie bialgebroid crossed modules'' and show its  relation with many other objects.   Let us first  review some basic notions that are needed in this paper.

A Lie algebra crossed module is a pair of Lie algebras $(\thetaalgebra,\galgebra)$, which are related by a Lie algebra morphism $\phi:\thetaalgebra\longrightarrow \galgebra$, and satisfies some compatibility conditions. A well-known result is that the group of equivalence classes of crossed modules $\phi:\thetaalgebra\longrightarrow \galgebra$ describes the third cohomology group of the Lie algebra $\galgebra/\phi(\thetaalgebra)$ \cite{MR0207793}.  Moreover, $(\thetaalgebra,\galgebra)$ can be regarded as a Lie algebra pair $(\galgebra \ltimes\thetaalgebra,\galgebra)$. The Atiyah class of a such Lie algebra pair   tells the nontrivially of this construction of Lie algebras \cite{MR3439229}.

Lie algebroid crossed modules, first introduced in \cite{0501103}, are a natural generalization of Lie algebra crossed modules, and actually, are the infinitesimal counterparts of crossed modules of Lie groupoids.  Existence of certain principal bundles are characterized by an obstruction
class of a certain crossed module of Lie algebroids  \cite{MR2415176}.

In an earlier work  {\cite{MR3035114}}, Chen and Xu, the notion of weak Lie 2-bialgebras and Lie bialgebra crossed modules.  It turns out that Lie bialgebra crossed modules are particular instances of weak Lie 2-bialgebras and have many interesting properties. For example, a Lie bialgebra crossed module can be lifted to a Poisson 2-group, and conversely, the infinitesimal of a Poisson 2-group is a Lie bialgebra crossed module \cite{MR3080481}.

 Therefore, as explained above, it is natural to quest for the proper generalized notion of Lie bialgebroid crossed modules. The answer we found is Definition \ref{Defn:bi-crossedmoduleofLiealgebra}. We shall see that this notion  involves  Lie bialgebroids, Courant algebroids and Lie algebroid crossed modules. Moreover, it can be equivalently described by a matched pair of Lie algebroids. It is quite novel to see both a matched pair of Lie algebroids and a Courant algebroid appearing in such a structure.

Moreover, we wish to find the Manin triple language of Lie bialgebroid crossed modules.
In this paper, we will propose our solutions to this question. We shall consider a pair of Lie algebroid crossed modules in duality, $\crossedmoduletriple{\thetaalgebra}{\phi}{\galgebra} $ and $\crossedmoduletriple{\galgebrastar}{\phiUprotate}{\thetaalgebrastar} $, where $\phiUprotate=-\phi^*$. They form a Lie bialgebroid crossed module if $(A_{\galgebra\triangleright\theta},A_{\theta^*\triangleright\galgebra^*}
 )$ is a Lie bialgebroid (see Definition \ref{Defn:bi-crossedmoduleofLiealgebra}).
We also define the notion of co-quadratic Lie algebroids, as well as their Dirac structures (see Definitions \ref{Defn:coquadratic} and  \ref{Defn:DiracStrs}). Accordingly, a co-quadratic Manin triple $(K,P,Q)$ is a co-quadratic Lie algebroid $K$ together with a pair of transverse Dirac structures $P$ and $Q$ (see Definition \ref{Defn:coquadraticManintriple}).

Our main results are the following: Theorem \ref{Thm:mathchedpairiff} ~gives an equivalent description of Lie bialgebroid crossed modules in the language of matched pairs. Theorem \ref{Thm:crossedmoduledouble} establishes a one-one correspondence between crossed modules of Lie bialgebroids and co-quadratic Manin triples.

This paper is organized as follows. Section 2 contains a succinct account of standard facts about Lie algebroid crossed modules, Lie algebroid matched pairs and Lie bialgebroids. Section 3 states the definition of Lie bialgebroid crossed modules and the main theorems on Lie biaglebroid crossed modules. Our results are then particularized to a few concrete examples in Section 4. Finally, Section 5 establishes a list of important identities and lemmas, and subsequently gives the proofs of our main theorems.

Notation: the pairing of a vector $x\in V$ and a co-vector $\xi\in V^*$ is written as any of the following forms: $\xi(x)$,  $x(\xi)$, $\pairing{x}{\xi}$  or $\pairing{\xi}{x}$.

\textbf{Acknowledgments}\\
We would like to express our gratitude to the following institutions for their hospitality while we
were working on this project: Tsinghua University (Yin) and Penn State University
(Lang). We would also like to thank Professor Zhangju Liu and Professor Mathieu Stienon for fruitful discussions and useful suggestions.


\section{Preliminaries}


\subsection{Lie algebroid crossed modules}
We first review the notion of Lie algebroids, for which the reader may refer to \cite{MR2157566}.
\begin{defn}
A Lie algebroid $(A,\lb{\cdot}{\cdot},\anchor)$ over $M$ consists of a vector bundle $A\to M$, a bundle map $\anchor:A\to TM$ called anchor and a Lie algebra bracket $\lb{\cdot}{\cdot}$ on the space of sections $\sections{A}$ such that $\anchor$ induces a Lie algebra homomorphism from $\sections{A}$ to $\XX(M)$ and the Leibniz identity \begin{equation*} \lb{X}{fY}=\big(\anchor(X)f\big)Y+f\lb{X}{Y} \end{equation*}
is satisfied for all $f\in\cinf{M}$ and $X,Y\in\sections{A}$.
\end{defn}

Every Lie algebra can be regarded as a Lie algebroid whose underlying base manifold is the one-point space. The notion of Lie algebroids  is  certainly a combination of the particular case of Lie algebras and the tangent bundle $TM$ of a manifold $M$.

It is well-known that a Lie algebroid $(A,\ba{\cdot}{\cdot},\anchor)$ gives rise to a Gerstenhaber algebra $(\sections{\wedge\graded A},\wedge,\lb{\cdot}{\cdot})$,
and a degree~1 derivation $\dee$ of the graded commutative algebra $(\sections{\wedge\graded {\As}},\wedge)$ such that $\dee^2=0$ \cite{MR1675117}. Here the (Lie algebroid) differential $\dee$ is given by
\begin{multline*}
(\dee\alpha)(X_1,\cdots,X_n,X_{n+1})=\sum_{i=1}^{n+1} (-1)^{i+1} \anchor(X_i)
 \alpha(X_1,\cdots,\widehat{X_i},\cdots,X_n,X_{n+1}) \\
+ \sum_{i<j} (-1)^{i+j} \alpha(\ba{X_i}{X_j},X_1,\cdots,\widehat{X_i},\cdots,\widehat{X_j},\cdots,X_{n+1}).
\end{multline*}
To each $X\in\sections{A}$, there is an associated degree~$(-1)$ derivation $\ii{X}$ of the graded commutative algebra $(\sections{\wedge\graded {\As}},\wedge)$, given by
\begin{equation*} (\ii{X}\alpha)(X_1,\cdots,X_n)
=\alpha(X,X_1,\cdots,X_n) .\end{equation*}
The Lie derivative $\ld{X}$ in the direction of a section $X\in\sections{A}$ is a degree~0 derivation of the graded commutative algebra $(\sections{\wedge\graded {\As}},\wedge)$ defined by the relation $\ld{X}=\ii{X}\dee+\dee\ii{X}$.

To avoid possible confusions, we sometimes write $\ld{X}^{A}$ instead of $\ld{X}$ to
indicate that the Lie derivative is coming from the Lie algebroid structure of $A$.

We remark that  $\Ker\anchor\subset A$, though it might be singular, is a bundle of Lie algebras, and known as the isotropic bundle of $A$. The Lie bracket defined pointwisely is induced from that of $\sections{A}$.

 \begin{ex}\label{Ex:actionLiealgebroids}
 An action of a Lie algebra $\galgebra$ on a manifold $M$ is a Lie algebra homomorphism $\rho:\galgebra\rightarrow \mathfrak{X}(M)$. Such an action gives rise to a Lie algebroid structure on the trivial bundle $M\times \galgebra\longrightarrow M$ whose anchor is induced from $\rho$, and the bracket on $\sections{M\times \galgebra}$ is given by
 $$
 \lb{fu }{gv}=fg\lb{u}{v}_{\galgebra}+f\bigl(\rho(u)g\bigr)v-g
 \bigl(\rho(v)f
 \bigr)u,
 $$
 where $u,v\in\galgebra$, $f,g\in \cinf{M}$.

 This Lie algebroid is called the action Lie algebroid and will be denoted by $M^{\galgebra}$ in the sequel. More details can be found in \cite{MR2157566}.
\end{ex}

We shall need the notion of Lie algebroid crossed modules \cite{0501103}.\footnote{In \cite{0501103}, it is supposed that $\galgebra$ is a transitive Lie algebroid. In this paper, it is unnecessary to maintain such a prerequisite.}
\begin{defn}
A Lie algebroid crossed module over a manifold $M$ is a quadruple $(\thetaalgebra,\phi,\galgebra,\triangleright)$, where $\thetaalgebra $ is a Lie algebra bundle over $M$, $\galgebra\longrightarrow M$ is a Lie algebroid, $\phi:\thetaalgebra\longrightarrow \galgebra$ is a {Lie algebroid morphism}
and $\triangleright:\galgebra\longrightarrow \mathfrak{D}(\thetaalgebra)$ is a representation
of $\galgebra$ on $\thetaalgebra$, such that
\begin{itemize}
\item[\rm(1)] $\phi(u)\triangleright v=[u,v]$, for all $u,v\in \sections{\thetaalgebra}$;
\item[\rm(2)] $\phi(x\triangleright u)=[x,\phi(u)]$, for all $x\in \sections{\galgebra}, u\in \sections{\thetaalgebra}.$
\end{itemize}
\end{defn}

\begin{rmk}
From the definition, it is easy to see that $\rho_{\galgebra}\circ \phi=0$, where $\rho_{\galgebra}$ is the anchor of of $\galgebra$. In other words, $\phi$ takes values in the isotropy Lie algebra bundle of $\galgebra$.
\end{rmk}


In this paper, we also use the shorthand notation $\crossedmoduletriple{\thetaalgebra}{\algebracrossedmodulemap}{\galgebra}$ to
denote such a Lie algebroid crossed module.

\begin{ex}[\cite{0501103}]\label{Ex:crossedmodulefromsymplectic}
{Let $(M,\omega)$ be a symplectic manifold.
Then the vector bundle $\galgebra=TM\oplus (M\times
\mathbb{R})$  becomes a  Lie algebroid over $M$ whose anchor is the
projection to the first component, and Lie bracket is
$$[X+f,Y+g]=[X,Y]+X(g)-Y(f)-\omega(X,Y),$$
where $X,Y\in \XX(M)$, $f,g\in \cinf{M}$.

Let $\theta=M\times \mathbb{R}$, $\phi$ be the natural inclusion of $\thetaalgebra$ into $\galgebra$. Define an action of $\galgebra$ on $\theta$ by $(X+f)\triangleright g=X(g)$. Then $(\theta,\galgebra,\phi,\triangleright)$ is a Lie algebroid crossed module.}
\end{ex}

\begin{lem}\label{Lem:ensentialcrossedmoduleLiealgebra}
Given a Lie algebroid $\galgebra$, a  $\galgebra$-module
$\thetaalgebra$, and  a bundle map $\algebracrossedmodulemap:~\thetaalgebra\lon
\galgebra$ satisfying the following two conditions:
\begin{eqnarray*} &\algebracrossedmodulemap (  x \moduleaction u)=\ba{ x }{\algebracrossedmodulemap
(u)},\\  &{\algebracrossedmodulemap (u)} \moduleaction v=-{\algebracrossedmodulemap (v)}
\moduleaction  u ,
\end{eqnarray*}for all $ ~ u,v\in
\sections{\thetaalgebra},  x \in \sections{\galgebra}$, there exists a unique Lie
algebra bundle structure on $\thetaalgebra$ such that
$\crossedmoduletriple{\thetaalgebra}{\algebracrossedmodulemap}{\galgebra} $ is a Lie
algebroid crossed module.
\end{lem}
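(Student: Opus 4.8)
The plan is to observe that axiom (1) in the definition of a Lie algebroid crossed module \emph{forces} the bracket on $\thetaalgebra$, so the lemma reduces to verifying that this forced bracket is a genuine Lie algebra bundle structure and that the remaining requirements follow automatically. Accordingly I would \emph{define}
\[
[u,v]_{\thetaalgebra} := \phi(u)\triangleright v, \qquad u,v\in\sections{\thetaalgebra}.
\]
Uniqueness is then immediate: any crossed-module bracket must satisfy axiom (1), hence must agree with $\phi(u)\triangleright v$ on sections, and a fibrewise bracket is determined by its values on sections.

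The first genuinely delicate point is to show that this formula is $\cinf{M}$-bilinear, so that $\thetaalgebra$ becomes a \emph{bundle} of Lie algebras (and not merely a Lie algebroid). Linearity in $u$ is clear from $\phi(fu)=f\phi(u)$ and the $\cinf{M}$-linearity of $\triangleright$ in the $\galgebra$-slot. For the $v$-slot, the derivation $\phi(u)\triangleright$ has symbol $\rho_{\galgebra}(\phi(u))$, whence $\phi(u)\triangleright(fv)=\big(\rho_{\galgebra}(\phi(u))f\big)v+f\,[u,v]_{\thetaalgebra}$; tensoriality thus requires $\rho_{\galgebra}\circ\phi=0$. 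I would \emph{derive} this from the antisymmetry hypothesis itself: expanding both sides of $\phi(u)\triangleright(fv)=-\phi(fv)\triangleright u$ and cancelling the common term $f\,[u,v]_{\thetaalgebra}$ leaves $\big(\rho_{\galgebra}(\phi(u))f\big)v=0$ for all $f,v$, forcing $\rho_{\galgebra}(\phi(u))=0$. With $\rho_{\galgebra}\circ\phi=0$ in hand the bracket is tensorial, and antisymmetry is exactly the second hypothesis.

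The crux is the Jacobi identity. For $u,v,w\in\sections{\thetaalgebra}$ I would compute
\[
[[u,v]_{\thetaalgebra},w]_{\thetaalgebra}=\phi(\phi(u)\triangleright v)\triangleright w=[\phi(u),\phi(v)]_{\galgebra}\triangleright w,
\]
where the first hypothesis $\phi(x\triangleright u)=[x,\phi(u)]$, applied with $x=\phi(u)$, converts $\phi(\phi(u)\triangleright v)$ into the $\galgebra$-bracket. Because $\triangleright$ is a \emph{flat} representation, $[\phi(u),\phi(v)]_{\galgebra}\triangleright w=\phi(u)\triangleright(\phi(v)\triangleright w)-\phi(v)\triangleright(\phi(u)\triangleright w)=[u,[v,w]_{\thetaalgebra}]_{\thetaalgebra}-[v,[u,w]_{\thetaalgebra}]_{\thetaalgebra}$, which is precisely the Leibniz form of Jacobi. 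This single computation is where both hypotheses and the flatness of the representation are used simultaneously, and it is the only substantial calculation.

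It remains to confirm the crossed-module axioms for $\crossedmoduletriple{\thetaalgebra}{\phi}{\galgebra}$ with this structure: axiom (1) holds by the very definition of the bracket, axiom (2) is the first hypothesis verbatim, and the same identity $\phi([u,v]_{\thetaalgebra})=\phi(\phi(u)\triangleright v)=[\phi(u),\phi(v)]_{\galgebra}$ shows that $\phi$ preserves brackets; together with $\rho_{\galgebra}\circ\phi=0$ (matching the zero anchor of the Lie algebra bundle $\thetaalgebra$) this makes $\phi$ a Lie algebroid morphism, while $\triangleright$ is the given representation. I expect no real obstacle beyond bookkeeping; the one step demanding genuine care is the tensoriality argument, which silently hinges on first extracting $\rho_{\galgebra}\circ\phi=0$ from the antisymmetry condition, together with the clean organization of the Jacobi computation.
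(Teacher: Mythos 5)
Your proposal is correct and takes exactly the paper's approach: the paper's entire proof consists of defining $\ba{u}{v}=\phi(u)\moduleaction v$ on $\sections{\thetaalgebra}$ and asserting that ``the rest of the claim can be verified directly.'' Your write-up supplies precisely those omitted verifications --- in particular the extraction of $\rho_{\galgebra}\circ\phi=0$ from the antisymmetry hypothesis (which is what makes the bracket tensorial, hence fibrewise) and the Jacobi identity via flatness of the representation together with $\phi(\phi(u)\moduleaction v)=\ba{\phi(u)}{\phi(v)}$ --- and all of these steps are sound.
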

\begin{proof}
Define the Lie bracket on $\theta$ by
$\ba{u}{v}=\algebracrossedmodulemap (u)\moduleaction v$, $\forall u,v\in\sections{\thetaalgebra}$. The rest of the claim can be verified directly.
 \end{proof}

The following fact is needed, which can be verified directly.
\begin{prop}\label{Prop:lie2algebra}
Given a Lie algebroid crossed module
$\crossedmoduletriple{\thetaalgebra}{\algebracrossedmodulemap}{\galgebra} $, there
exists a Lie algebroid $A_{\galgebra\triangleright \theta}$, which is the
direct sum $\galgebra\oplus \thetaalgebra$ equipped  with
the  {anchor $\rho=\rho_\galgebra$ and the Lie
bracket such that the  $\thetaalgebra$ and
$\galgebra$ are Lie subalgebroids}, and
 $$
 \ba{x}{ u }= x \moduleaction u,\quad\forall   x \in \sections{\galgebra}, u\in \sections{\thetaalgebra}.
 $$
\end{prop}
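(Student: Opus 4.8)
The plan is to realise $A_{\galgebra\triangleright\theta}$ as the semidirect product of the Lie algebroid $\galgebra$ by the Lie algebra bundle $\thetaalgebra$. Concretely, I would equip the vector bundle $\galgebra\oplus\thetaalgebra$ with the anchor $\rho(x+u)=\rho_\galgebra(x)$ and the bracket
$$[x+u,\,y+v] = [x,y]_{\galgebra} + \big(x\triangleright v - y\triangleright u + [u,v]_{\thetaalgebra}\big),$$
for $x,y\in\sections{\galgebra}$ and $u,v\in\sections{\thetaalgebra}$, where $[\cdot,\cdot]_{\thetaalgebra}$ denotes the fibrewise Lie algebra bracket of the bundle $\thetaalgebra$. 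This is the unique bilinear bracket that restricts to the given brackets on the two summands and satisfies $[x,u]=x\triangleright u$, so it automatically makes $\galgebra$ and $\thetaalgebra$ Lie subalgebroids; skew-symmetry and $\reals$-bilinearity are immediate from the corresponding properties of the three ingredients.

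Next I would check the Leibniz rule $[X,fY]=(\rho(X)f)Y+f[X,Y]$. Writing $X=x+u$, $Y=y+v$ and expanding, the derivative-of-$f$ terms come from the Leibniz rule of $\galgebra$ and from the symbol identity $x\triangleright(fv)=(\rho_\galgebra(x)f)v+f(x\triangleright v)$ for the covariant differential operator $x\triangleright$, while the remaining terms use $\cinf{M}$-bilinearity of $[\cdot,\cdot]_{\thetaalgebra}$ together with $(fy)\triangleright u=f(y\triangleright u)$; collecting yields exactly $(\rho_\galgebra(x)f)(y+v)+f[X,Y]$. That $\rho$ intertwines brackets follows at once, since the $\thetaalgebra$-component is anchored trivially and $\rho_\galgebra$ is already a bracket homomorphism.

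The substantive point is the Jacobi identity, which I would verify on homogeneous triples, using $\reals$-bilinearity to reduce to four cases according to how many arguments lie in $\sections{\galgebra}$. The all-$\galgebra$ case is the Jacobi identity of $\galgebra$, and the all-$\thetaalgebra$ case is the fibrewise Jacobi identity of the Lie algebra bundle. The mixed cases are where the representation hypotheses enter: for two arguments $x,y$ in $\galgebra$ and one $w$ in $\thetaalgebra$ the cyclic sum collapses to $[x,y]_\galgebra\triangleright w - x\triangleright(y\triangleright w) + y\triangleright(x\triangleright w)$, which vanishes precisely because $\triangleright$ is flat, i.e. a Lie algebroid morphism into $\mathfrak{D}(\thetaalgebra)$; for one argument $x$ in $\galgebra$ and two $v,w$ in $\thetaalgebra$ it collapses to $[x\triangleright v,w]_{\thetaalgebra} + [v,x\triangleright w]_{\thetaalgebra} - x\triangleright[v,w]_{\thetaalgebra}$, which vanishes because $x\triangleright$ acts by derivations of the fibrewise bracket. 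I expect the careful tracking of signs in these mixed cyclic sums to be the only delicate part of the argument.

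Finally, the remaining assertions are read off the construction: $\galgebra$ and $\thetaalgebra$ are closed under the bracket and hence Lie subalgebroids, and $[x,u]=x\triangleright u$ holds by definition; uniqueness follows because the subalgebroid conditions, the prescription $[x,u]=x\triangleright u$, bilinearity and the Leibniz rule together determine the bracket on all of $\sections{\galgebra\oplus\thetaalgebra}$. Note in particular that the crossed-module compatibility conditions (1) and (2) are not needed here—only that $\thetaalgebra$ is a bundle of Lie algebras and $\triangleright$ is a genuine representation by derivations—which is why the verification is purely direct.
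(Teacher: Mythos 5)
Your construction---the semidirect-product anchor and bracket on $\galgebra\oplus\thetaalgebra$, with the four-case check of the Jacobi identity---is the standard direct verification, and it is exactly what the paper intends: the paper offers no argument for this proposition beyond the phrase ``which can be verified directly,'' so there is no competing proof to compare against. The Leibniz rule, the anchor condition, the uniqueness remark, and three of the four Jacobi cases are handled correctly.

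The one point you should not let stand as written is the closing remark that the crossed-module conditions (1) and (2) are not needed, ``only that $\triangleright$ is a genuine representation by derivations.'' That phrase smuggles in the contested point. In the paper, $\triangleright$ is a representation of $\galgebra$ on $\thetaalgebra$ valued in $\mathfrak{D}(\thetaalgebra)$; if this is read as the Lie algebroid of covariant differential operators on the underlying vector bundle (the usual meaning of ``representation on a vector bundle,'' and the reading forced on the paper's Lemma \ref{Lem:ensentialcrossedmoduleLiealgebra}, where $\thetaalgebra$ has no bracket yet), then nothing in the hypotheses says that $x\triangleright$ differentiates the fibrewise bracket, and your Jacobi case with one argument in $\sections{\galgebra}$ and two in $\sections{\thetaalgebra}$ is unjustified as stated. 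It is precisely there that the crossed-module axioms enter: using axiom (1), flatness of the representation, axiom (2), and axiom (1) again,
$$x\triangleright[v,w]=x\triangleright\bigl(\phi(v)\triangleright w\bigr)
=\phi(x\triangleright v)\triangleright w+\phi(v)\triangleright(x\triangleright w)
=[x\triangleright v,w]+[v,x\triangleright w].$$
So under that reading your proof has a gap (fixable by the displayed three-line derivation), and the claim of independence from (1) and (2) is false. If instead $\mathfrak{D}(\thetaalgebra)$ is taken in the sense of the paper's cited source for the definition (Androulidakis), namely bracket-preserving derivations of the Lie algebra bundle, then your proof is complete and your remark is a correct observation that the semidirect product exists for any such action, crossed module or not. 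Either way, you must say which reading of $\mathfrak{D}(\thetaalgebra)$ you are using, and under the weaker one you must insert the derivation above rather than assert the derivation property as a hypothesis.
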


\subsection{Lie algebroid matched pairs}

If $P$ and $Q$ are two Lie subalgebroids of a Lie algebroid $L$
such that $L=P\oplus Q$  as vector bundles, then $L/P\isomorphism Q$
is naturally a $P$-module while $L/Q\isomorphism P$ is naturally a $Q$-module.
In this situation, the Lie algebroids $P$ and $Q$ are said to form a matched pair.

 \begin{defn}[\cite{MR1430434,MR1460632,MR1716681}]\label{Defn:mathchedpair}
A pair $(P,Q)$ of two   Lie algebroids over the same base manifold $M$,
 with anchors $\rho_P$ and $\rho_Q$ respectively, is called  a matched pair
if there exists  {a representation}   of $P$ on $Q$ and {a representation} of $Q$ on $P$, both denoted by $\triangleright$,
such that the identities
\begin{gather*}
\ba{\rho_P (X)}{\rho_Q  (Y)} = -\rho_P \big(Y\triangleright X\big)+\rho_Q  \big(X\triangleright Y\big) , \\
X\triangleright\ba{Y_1}{Y_2} = \ba{X\triangleright Y_1}{Y_2} +
\ba{Y_1}{X\triangleright Y_2} + ({Y_2}\triangleright X)\triangleright Y_1 -
({Y_1}\triangleright X)\triangleright Y_2, \\
Y\triangleright\ba{X_1}{X_2} = \ba{Y\triangleright X_1}{X_2} +
\ba{X_1}{Y\triangleright X_2} + (X_2\triangleright Y)\triangleright X_1 -
(X_1\triangleright Y)\triangleright X_2,
\end{gather*}
hold for all $X_1,X_2,X\in\sections{P}$ and $Y_1,Y_2,Y\in\sections{Q}$.
\end{defn}
The following lemma is standard.
\begin{lem}[\cite{MR1460632,MR1716681}]\label{lem:matchedpair}
Given a matched pair $(P,Q)$ of Lie algebroids, there is a Lie
algebroid structure $P\bowtie Q$ on the direct sum vector bundle
$P\oplus Q$, with anchor \[ X\oplus Y\mapsto\rho_P(X)+\rho_Q(Y) \] and bracket
\begin{equation*}
\ba{X_1\oplus Y_1}{X_2\oplus Y_2}
= \big(\ba{X_1}{X_2} + Y_1\triangleright X_2 - Y_2\triangleright X_1 \big)
\oplus \big( \ba{Y_1}{Y_2} + X_1\triangleright Y_2 - X_2\triangleright Y_1 \big) .
\end{equation*}
Conversely, if $P\oplus Q$ has a Lie algebroid structure for
which $P\oplus 0$ and $0\oplus Q$ are Lie subalgebroids, then the
representations defined by
\[ \ba{X\oplus 0}{0\oplus Y} = -Y \triangleright X\oplus X\triangleright Y \]
endow the couple $(P,Q)$ with a matched pair structure.
\end{lem}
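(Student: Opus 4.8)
The plan is to establish the two directions separately, in each case pushing every verification down to pure sections of $P$ and $Q$. For the forward direction I would start from the proposed anchor $\rho(X\oplus Y)=\rho_P(X)+\rho_Q(Y)$ and the stated bracket, and check the Lie algebroid axioms in increasing order of difficulty. Skew-symmetry is immediate from the formula. The Leibniz rule follows by expanding $[X_1\oplus Y_1,f(X_2\oplus Y_2)]$ and invoking the individual Leibniz rules of $P$ and $Q$ together with the connection property of the two representations, namely $Y\triangleright(fX)=(\rho_Q(Y)f)X+f(Y\triangleright X)$ and its analogue, while the $\cinf{M}$-linearity of $\triangleright$ in the acting slot absorbs the remaining terms; the derivative contributions reassemble into $(\rho(X_1\oplus Y_1)f)(X_2\oplus Y_2)$. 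That the anchor is bracket-preserving, $\rho([s_1,s_2])=[\rho(s_1),\rho(s_2)]$, reduces after using that $\rho_P,\rho_Q$ are morphisms to exactly the first matched pair identity applied to the mixed pairs $(X_1,Y_2)$ and $(X_2,Y_1)$.

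The main step is the Jacobi identity, and the key simplification is that, once skew-symmetry, the Leibniz rule, and the anchor-morphism property are in force, the Jacobiator $J(s_1,s_2,s_3)=[[s_1,s_2],s_3]+\text{c.p.}$ is $\cinf{M}$-trilinear (the obstruction to tensoriality is precisely the failure of $\rho$ to be a morphism, which we have just ruled out). It therefore suffices to check $J=0$ on a local frame, which I may take to consist of pure sections $X\oplus 0$ and $0\oplus Y$. Triples with all three entries in $P$ (resp.\ $Q$) reproduce the Jacobi identity of $P$ (resp.\ $Q$). For a mixed triple of type $(X_1\oplus 0,X_2\oplus 0,0\oplus Y)$, expanding with the bracket of $P\bowtie Q$ and separating components, the $Q$-component collapses to the flatness relation $[X_1,X_2]\triangleright Y=X_1\triangleright(X_2\triangleright Y)-X_2\triangleright(X_1\triangleright Y)$, which holds because $\triangleright$ is a representation, while the $P$-component reorganizes precisely into the third matched pair identity; the symmetric triple $(0\oplus Y_1,0\oplus Y_2,X\oplus 0)$ yields the flatness of the other representation and the second matched pair identity. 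Hence $J$ vanishes on all pure triples, and by tensoriality identically.

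For the converse, assuming $P\oplus 0$ and $0\oplus Q$ are subalgebroids, I would define $\triangleright$ by declaring the components of the mixed bracket, $[X\oplus 0,0\oplus Y]=(-Y\triangleright X)\oplus(X\triangleright Y)$. The connection axioms for these two maps, that is $\cinf{M}$-linearity in the acting argument and the Leibniz rule in the module argument, fall out of the ambient Leibniz rule applied to $[fX\oplus 0,0\oplus Y]$ and $[X\oplus 0,0\oplus fY]$, once one notes that the ambient anchor restricts to $\rho_P$ on $P$ and $\rho_Q$ on $Q$. The first matched pair identity is the mixed component of the anchor-preservation of the ambient bracket, and the flatness of each representation together with the remaining two matched pair identities is read off the $P$- and $Q$-components of the ambient Jacobi identity evaluated on the same pure mixed triples, which is literally the forward computation run in reverse. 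I expect the principal difficulty to be organizational rather than conceptual: tracking signs and keeping straight which component of each mixed bracket is fed into the next bracket in the Jacobiator, and carefully justifying the $\cinf{M}$-trilinearity of the Jacobiator that legitimizes the reduction to pure sections.
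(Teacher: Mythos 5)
Your proposal is correct, but there is nothing in the paper to compare it against: the paper does not prove this lemma at all, stating it as standard and citing Mokri and Mackenzie--Mokri, from whose work it is quoted. What you give is essentially the standard direct verification from those references, organized efficiently. The one genuinely valuable structural idea beyond brute force is your observation that once skew-symmetry, the Leibniz rule, and the anchor-morphism property (which on mixed pairs is exactly the first matched-pair identity) are in place, the Jacobiator is $\cinf{M}$-trilinear, so the Jacobi identity need only be checked on pure sections; there the four types of triples reduce, respectively, to the Jacobi identities of $P$ and of $Q$, to the flatness of the two representations, and to the second and third matched-pair identities --- and this is indeed how the computation comes out (for the triple $(X_1\oplus 0, X_2\oplus 0, 0\oplus Y)$ the $Q$-component of the Jacobiator is $[X_1,X_2]\triangleright Y-X_1\triangleright(X_2\triangleright Y)+X_2\triangleright(X_1\triangleright Y)$ and the $P$-component is precisely the defect of the third identity). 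Your converse is also sound; the only point worth making explicit is that bilinearity of the ambient bracket forces it to coincide with the formula of the first part once its restrictions to $P\oplus 0$, to $0\oplus Q$, and to mixed pairs are known, and that the ambient anchor restricts to $\rho_P$ and $\rho_Q$ because $P\oplus 0$ and $0\oplus Q$ are assumed to be Lie subalgebroids; this is what legitimizes reading the forward computation in reverse to extract the connection axioms, the flatness of both representations, and the three matched-pair identities.
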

{
\begin{ex}[\cite{MR1460632}]
A right $\galgebra$-action $\phi:\galgebra\longrightarrow \mathfrak{X}(M)$ on a manifold $M$ gives rise to an action Lie algebroid $M^\galgebra$ (see Example \ref{Ex:actionLiealgebroids}) and a matched pair of Lie algebroids $(TM,M^{\galgebra})$. The two representations are defined by
$$ X\triangleright (fx)=X(f)x,\ \ \ \ \ \ \ \ \ \ (fx)\triangleright X=f[\phi(x),X],$$
for any $X\in \mathfrak{X}(M), f\in C^\infty(M)$ and $x\in \galgebra$.
\end{ex}
}

{The following example is well-known.}
\begin{ex}[\cite{MR2685337}]\label{ex:liebialgebra}
If the direct sum $\mfg\oplus\mfg^*$ of a vector space $\mfg$ and its dual $\mfg^*$
is endowed with a Lie algebra structure such that the direct summands $\mfg$ and $\mfg^*$ are Lie subalgebras and
\[ [X,\alpha]=\coadjoint{X}\alpha - \coadjoint{\alpha}X
,\qquad \forall X\in\mfg, \alpha\in\mfg^* ,\]
the pair $(\mfg,\mfg^*)$ is said to be a Lie bialgebra.
Lie bialgebras are instances of matched pairs of Lie algebroids. The Lie algebra $\mfg\bowtie\mfg^*$ is also called the double Lie algebra arising from the given Lie bialgebra.
\end{ex}

\subsection{Lie bialgebroids, Courant algebroids and Manin triples}

In this section, we state basic definitions and facts about Lie bialgebroids and Courant algebroids.

Let $A\to M$ be a vector bundle. Assume that $A$ and its dual ${\As}$ are both Lie algebroids with anchor maps
$\anchor:A\to TM$ and $\anchors:{\As}\to TM$, brackets on sections
$\sections{A}\otimes_{\reals}\sections{A}\to\sections{A}:u\otimes v\mapsto\ba{u}{v}$ and
$\sections{{\As}}\otimes_{\reals}\sections{{\As}}\to\sections{{\As}}:\theta\otimes \eta\mapsto\bas{\theta}{\eta}$, and differentials
$\dee:\sections{\wedge^{\bullet}{\As}}\to\sections{\wedge^{\bullet+1}{\As}}$
and $\dees:\sections{\wedge^{\bullet}A}\to\sections{\wedge^{\bullet+1}A}$, respectively.

\begin{defn}[\cites{MR1362125,MR1746902,MR1262213}]
A pair of Lie algebroids $(A,\As)$ as above is a Lie bialgebroid    if $\dees$ is a
derivation of the Gerstenhaber algebra $(\sections{\wedge\graded
A},\wedge,\ba{\cdot}{\cdot})$, i.e.,
$$\dees\ba{u}{v}=\ba{\dees u}{v} +(-1)^{k-1}\ba{u}{\dees v},\quad\forall u\in\sections{\wedge^k A}, \forall v\in\sections{\wedge^l
A} .$$

Or, equivalently, if $\dee$ is a
derivation of the Gerstenhaber algebra $(\sections{\wedge\graded
{\As}},\wedge,\bas{\cdot}{\cdot})$, i.e.,
     $$\dee\bas{\theta}{\eta}=\bas{\dee\theta}{\eta}+(-1)^{k-1}\bas{\theta}{\dee\eta},\quad\forall \theta\in\sections{\wedge^k {\As}}, \forall \eta\in\sections{\wedge^l
{\As}} .$$
\end{defn}

Since the bracket
$\bas{\cdot}{\cdot}$ (resp. $\ba{\cdot}{\cdot}$) can be recovered
from the derivation $\dees$ (resp. $\dee$), one is led to the
following alternative definition: a Lie bialgebroid is a pair
$(A,\dees)$ consisting of a Lie algebroid
$(A,\ba{\cdot}{\cdot},\anchor)$ and a degree~1 derivation $\dees$ of
the Gerstenhaber algebra $(\sections{\wedge\graded
A},\wedge,\ba{\cdot}{\cdot})$ such that $\dees^2=0$.

In the particular case that $M$ is a single point, a Lie bialgbroid $(A,\As)$ degenerates to a Lie bialgebra (see Example \ref{ex:liebialgebra}).

 \begin{ex}\label{ex:classicalrmatrix}
Let us briefly recall the notion of exact Lie bialgebroids
\cite{MR1371234,MR1362125}. Let $A$ be a Lie algebroid with bracket $\ba{\cdot}{\cdot}$ on $\sections{A}$ and anchor map $\anchor:A\to TM$.
Given $\Lambda\in\sections{\wedge^2 A}$ satisfying
$\ba{\ba{\Lambda}{\Lambda}}{X}=0$ for all $X\in\sections{A}$, the bracket
\begin{equation*}
\Lambdabracket{\xi,\theta}=\LieDer_{\Lambdasharp(\xi)}\theta-\LieDer_{\Lambdasharp(\theta)}\xi
-\dA (\Lambda(\xi,\theta))=\LieDer_{\Lambdasharp(\xi)}\theta
-\inserts_{\Lambdasharp(\theta)}\dA \xi
\end{equation*}
on $\sections{\As}$ and the anchor map
$\anchors=\anchor\rond\Lambda\diese$,
make ${\As}$ a Lie algebroid.
The pair of Lie algebroid structures on $A$ and ${\As}$ fits into a Lie bialgebroid $(A,\As)$, which is known as an exact Lie bialgebroid. Such an element $\Lambda$ is also known as an $r$-matrix on $A$.
\end{ex}

A closely related notion to that of Lie bialgebroids is that of Courant algebroids, which is introduced in \cite{MR1472888} as a way to merge the concept of Lie bialgebras and the bracket on $\XX(M)\oplus\OO^1(M)$ --- here $M$ is a smooth manifold --- first discovered by Courant \cite{MR998124}. Roytenberg gave an equivalent definition phrased in terms of the Dorfman bracket \cite{MR2699145}.

\begin{defn}\label{Defn:CA}
A Courant algebroid consists of a vector bundle $\pi:E\to M$, a non-degenerate pseudo-metric $\ip{\cdot}{\cdot}$ on the fibers of $\pi$,
a bundle map $\rho:~E\to TM$ called anchor, and an $\reals$-bilinear
operation $\db{}{}$ on $\sections{E}$ called Dorfman bracket, which,
for all $f\in\cinf{M}$ and $x,y,z\in\sections{E}$, satisfy the
relations
\begin{eqnarray}\label{CA:1}
 \db{x}{(\db{y}{z})}&=&\db{(\db{x}{y})}{z}+\db{y}{(\db{x}{z})}; \\\label{CA:2}
 \rho(\db{x}{y})&=&\lb{\rho(x)}{\rho(y)};  \\\label{CA:3}
  \db{x}{fy}&=&\big(\rho(x)f\big)y+f(\db{x}{y});  \\\label{CA:4}
 \db{x}{y}+\db{y}{x}&=&\DD\ip{x}{y};  \\\label{CA:5}
 \db{\DD f}{x}&=&0;  \\\label{CA:6}
  \rho(x)\ip{y}{z}&=&\ip{\db{x}{y}}{z}+\ip{y}{\db{x}{z}}
,  \end{eqnarray}
where $\DD:\cinf{M}\to\sections{E}$ is the $\reals$-linear map defined by
$\ip{\DD f}{x}=\rho(x)f$.
\end{defn}

The symmetric part of the Dorfman bracket is given by \eqref{CA:4}.
The Courant bracket is defined as the skew-symmetric part
$\cb{x}{y}=\thalf(\db{x}{y}-\db{y}{x})$ of the Dorfman bracket. Thus we have the relation $\db{x}{y}=\cb{x}{y}+\thalf\DD\ip{x}{y}$.

The definition of a Courant algebroid can be rephrased using the Courant bracket instead of the Dorfman bracket \cite{MR1472888}.  More, regular Courant algebroids are extensively studied in
\cite{MR3022918}.

\begin{defn}
A Dirac structure of a Courant algebroid $E$ is a smooth subbundle $D\to M$, which is maximally isotropic with respect to the pseudo-metric and whose space of sections is closed under the Dorfman bracket.
\end{defn}

Thus a Dirac structure inherits a canonical Lie algebroid structure.

In analogue to Example \ref{ex:liebialgebra},
the notion of Courant algebroids  plays the role of the double of Lie bialgebroids. In fact, the relation between Courant algebroids and Lie bialgebroids is given  {as follows}.

\begin{thm}[\cite{MR1472888}]
\label{Thm:double} There is a one-one correspondence between Lie
bialgebroids and pairs of transverse Dirac structures of a Courant
algebroid.
\end{thm}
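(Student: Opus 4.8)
The plan is to realize the correspondence through the explicit double construction, in the Manin triple spirit of Example~\ref{ex:liebialgebra}: from a Lie bialgebroid I build a canonical Courant algebroid on the Whitney sum $E=A\oplus\As$ in which $A$ and $\As$ sit as transverse Dirac structures, and conversely I recover a Lie bialgebroid from an arbitrary pair of transverse Dirac structures; the final step is to check that the two assignments are mutually inverse.

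First, given a Lie bialgebroid $(A,\As)$, I would equip $E=A\oplus\As$ with the pseudo-metric $\ip{X+\xi}{Y+\eta}=\half\big(\ip{\xi}{Y}+\ip{\eta}{X}\big)$, the anchor $\anchor(X+\xi)=\anchor(X)+\anchors(\xi)$, and the Dorfman bracket
\begin{equation*}
\db{(X+\xi)}{(Y+\eta)}=\big(\ba{X}{Y}+\ld{\xi}Y-\ii{\eta}\dees X\big)+\big(\bas{\xi}{\eta}+\ld{X}\eta-\ii{Y}\dee\xi\big),
\end{equation*}
where $\DD$ is assembled from the two differentials $\dee$ and $\dees$. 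Most of the Courant axioms \eqref{CA:2}--\eqref{CA:6} of Definition~\ref{Defn:CA} are then straightforward bookkeeping: compatibility of $\anchor$ with the two anchors gives \eqref{CA:2}, the Leibniz rules of $\dee$, $\dees$ and the two Lie derivatives give \eqref{CA:3} and \eqref{CA:6}, and \eqref{CA:4} together with \eqref{CA:5} follows from Cartan calculus. The decisive axiom is the left Leibniz/Jacobi identity \eqref{CA:1}, and this is exactly where the hypothesis enters: expanding both sides and separating the $A$- and $\As$-components, the mixed terms reassemble into the derivation condition $\dees\ba{X}{Y}=\ba{\dees X}{Y}+\ba{X}{\dees Y}$ and its $\dee$-counterpart, so \eqref{CA:1} holds precisely when $(A,\As)$ is a Lie bialgebroid. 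Once the Courant structure is in place, $A=A\oplus 0$ and $\As=0\oplus\As$ are visibly maximally isotropic (the metric pairs $A$ against $\As$), transverse since $E=A\oplus\As$, and closed under $\db{}{}$ because the bracket restricts to $\ba{\cdot}{\cdot}$ and $\bas{\cdot}{\cdot}$; hence they form a pair of transverse Dirac structures recovering the original brackets.

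Conversely, given transverse Dirac structures $P,Q$ with $E=P\oplus Q$, maximal isotropy forces the pseudo-metric to vanish on each summand, so it induces a nondegenerate pairing between $P$ and $Q$ and an identification $Q\isomorphism\Ps$. On each Dirac structure the symmetric part \eqref{CA:4} vanishes, so the Dorfman bracket is skew there and, via \eqref{CA:1} and \eqref{CA:3}, endows $P$ and $Q\isomorphism\Ps$ with Lie algebroid structures. Reading \eqref{CA:1} for mixed arguments $x\in\sections{P}$ and $y\in\sections{Q}$ and projecting onto the two summands then yields precisely the derivation condition characterizing a Lie bialgebroid $(P,\Ps)$, and tracing through the two constructions shows they invert each other.

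The main obstacle is the two-way analysis of \eqref{CA:1}: verifying that the single Leibniz identity of the Dorfman bracket encodes exactly the bialgebroid compatibility condition, in both directions. This computation is lengthy because the cross terms intertwine Lie derivatives, interior products and both differentials, so I would streamline it with the Cartan identity $\ld{\xi}=\ii{\xi}\dees+\dees\ii{\xi}$ and the relation $\db{x}{y}=\cb{x}{y}+\thalf\DD\ip{x}{y}$ to pass between the Dorfman and Courant presentations; the mutual-inverse claim then follows from the observation that the metric and the bracket are simultaneously determined by, and determine, the pair of transverse Lagrangian subbundles together with their induced Lie algebroid structures.
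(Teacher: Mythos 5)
Your proposal is correct and takes essentially the same route as the paper: Theorem \ref{Thm:double} is quoted there from Liu--Weinstein--Xu, and the paper's accompanying discussion is exactly your forward construction --- the double $E=A\oplus\As$ with the pseudo-metric, anchor $\anchor+\anchors$, and Dorfman bracket \eqref{Eqt:Dorfmanbracket} --- while your converse (transverse Dirac structures pair nondegenerately, giving $Q\isomorphism\Ps$ and a bialgebroid) is the standard argument of the cited source. One correction to your bookkeeping: the bialgebroid hypothesis is needed not only for \eqref{CA:1} but also for the mixed case of \eqref{CA:2} and for \eqref{CA:5}; for instance $\db{\DD f}{X}=[\dees f,X]+\ld{\dee f}X$, whose vanishing is a consequence of the compatibility condition and fails for a general pair of dual Lie algebroid structures, so it is not a matter of Cartan calculus alone (only \eqref{CA:3}, \eqref{CA:4} and \eqref{CA:6} are automatic).
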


More precisely,   if the pair $(A,{\As})$ is a Lie bialgebroid,
then the vector bundle $A\oplus {\As}\to M$ together with the pseudo-metric
\begin{equation*}  \pairing{X_1+\xi_1}{X_2+\xi_2}=\xi_1(X_2)+\xi_2(X_1),\end{equation*}
the anchor map $\rho=\anchor+\anchors$ (whose dual is given by $\DD f=\dee f+\dees f$ for $f\in\cinf{M}$, and the Dorfman bracket
\begin{equation}\label{Eqt:Dorfmanbracket} \db{(X_1+\xi_1)}{(X_2+\xi_2)}=\big(\ba{X_1}{X_2}+\ld{\xi_1}X_2-\ii{\xi_2} \dees X_1 \big)+\big(\bas{\xi_1}{\xi_2}+\ld{X_1}\xi_2-\ii{X_2} \dee\xi_1 \big) \end{equation}
forms a Courant algebroid in which $A$ and ${\As}$ are transverse Dirac structures, where $X_1,X_2$ denote arbitrary sections of $A$ and $\xi_1,\xi_2$ are arbitrary sections of ${\As}$.
This Courant algebroid is called the double of the Lie bialgebroid $(A,{\As})$.


Following Liu-Xu-Weinstain \cite{MR1472888}, we have
\begin{defn}The data
$(E,A,B)$ is called a Manin triple, where $E$ is a Courant algebroid,  $A$ and $B$ are mutually transverse Dirac structures  {of $E$}.
\end{defn}

\begin{ex}Assume that we are given a Lie algebroid matched pair $(P,Q)$ as in Definition \ref{Defn:mathchedpair}.
The action of $P$ on $Q$ induces an action of $P$ on $Q^*$:
\begin{equation*}
\pairing{X\triangleright \xi}{Y}=\rho_{P}(X)\pairing{\xi}{Y}-\pairing{\xi}{X\triangleright Y},\ \ \ \ \forall X\in \Gamma(P), Y\in \Gamma(Q), \xi\in \Gamma(Q^*).
\end{equation*}
Similarly, there is an induced action of $Q$ on $P^*$.
Then one is able to construct a pair of semi-direct product Lie algebroids: $$A=P\ltimes Q^*,\qquad {\As}=Q\ltimes P^*. $$
Here $P^*\subset \As$ (and similarly for that of $Q^*\subset A$) is a trivial Lie subalgebroid. It turns out that they form a Lie bialgebroid.

In fact, The Lie algebroid $P\bowtie Q$ and its dual bundle $P^*\oplus Q^*$, as a trivial Lie algebroid, form a Lie bialgebroid. We thus have a Courant algebroid $E=P\bowtie Q\oplus P^*\oplus Q^*$, in which  $A$ and ${\As}$ are transverse Dirac structures. Hence we have the conclusion.
\end{ex}

\begin{ex}\label{action Lie bialgebroid}
Let $(\galgebra,\galgebra^*)$ be a Lie bialgebra, $\galgebra\bowtie \galgebra^*$ the double Lie algebra (see Example \ref{ex:liebialgebra}).
Suppose that there exist two actions $\varphi: \galgebra\longrightarrow \mathfrak{X}(M)$ and $\psi:\galgebra^*\longrightarrow \mathfrak{X}(M)$ such that $$[\varphi(x),\psi(\xi)]=\psi(ad_x^* \xi)-\varphi(ad_\xi^* x),\qquad\forall x\in \galgebra,\ \xi\in \galgebra^*.$$
We then
 get an action $\varphi\oplus\psi: \galgebra\bowtie \galgebra^*\longrightarrow \mathfrak{X}(M)$.
Assume further that $Ker(\varphi\oplus\psi)$ is coisotropic, i.e.,
$$\pairing{x_1}{\xi_2}+\pairing{x_2}{\xi_1}=0, \qquad\forall (x_i,\xi_i)\in \galgebra\oplus\galgebra^*, s.t.,\varphi(x_i)+\psi(\xi_i)=0.$$
We claim there is a Lie bialgebroid structure on $(M^\galgebra,M^{ \galgebra^*})$, where $M^\galgebra$ and $M^{\galgebra^*}$ are,  respectively, the action Lie algebroids induced by the actions $\varphi$ and $\psi$ (see Example \ref{Ex:actionLiealgebroids}).

To see this, we need a result in \cite{MR2507112},  which states that if there is an action $\phi$ of a quadratic Lie algebra $\galgebra$  on a manifold $M$ and  the kernel of $\phi$ is coisotropic, then there exists a Courant algebroid structure on $M\times \galgebra$ whose anchor is given by $\phi$.
In this example, we take $\galgebra\bowtie \galgebra^*$ as the quadratic Lie algebra, and get a Courant algebroid   $M\times(\galgebra\bowtie \galgebra^*)$. Moreover,  $M^\galgebra$ and $M^{\galgebra^*}$ are transverse Dirac structures, so they form a Lie bialgebroid.
\end{ex}

\section{Lie Bialgebroid Crossed Modules}
The  notion of Lie bialgebra crossed modules is   introduced in \cite{MR3035114}. The motivation is to study certain Poisson structures on semi-strict Lie 2-groups and it turns out that the infinitesimal data of such structures  exactly matches the notion of a Lie bialgebra crossed module.

A straightforward generalized notion is the following.

\begin{defn}\label{Defn:bi-crossedmoduleofLiealgebra}
 A Lie bialgebroid crossed module  is  a pair of Lie algebroid crossed modules in duality:
$\crossedmoduletriple{\thetaalgebra}{\phi}{\galgebra} $ and
$\crossedmoduletriple{\galgebrastar}{\phiUprotate
}{\thetaalgebrastar} $, where $\phiUprotate=-\phi^*$, such that
$(A_{\galgebra\triangleright\theta},A_{\theta^*\triangleright\galgebra^*}
 )$ is a Lie bialgebroid. Here $A_{\galgebra\triangleright\theta}$ and $A_{\theta^*\triangleright\galgebra^*}$ are  Lie algebroids  defined in Proposition \ref{Prop:lie2algebra}.

\end{defn}

Below are the main results of this paper, all of them will be proved in the last section.
The first theorem reveals the relation between matched pairs and Lie bialgebroid crossed modules.

\begin{thm}\label{Thm:mathchedpairiff}
Let $\crossedmoduletriple{\thetaalgebra}{\phi}{\galgebra} $ and
$\crossedmoduletriple{\galgebrastar}{\phiUprotate
}{\thetaalgebrastar}$ both be Lie algebroid crossed modules.  Then
they form a Lie bialgebroid crossed module if and only if the pair
$(\galgebra,\thetaalgebrastar)$ is a matched pair of Lie algebroids, where the  $\galgebra$-action on $\thetaalgebrastar$ is dual to the given $\galgebra$-action on $\thetaalgebra$,  {and} the  $\thetaalgebrastar$-action on $\galgebra$ is dual to the given $\thetaalgebrastar$-action on $\galgebrastar$. \end{thm}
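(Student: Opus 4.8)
The plan is to verify the defining compatibility of a Lie bialgebroid directly and to read the three matched-pair axioms off the result. First I would make both Lie algebroid structures fully explicit. By Proposition~\ref{Prop:lie2algebra}, $A:=A_{\galgebra\triangleright\thetaalgebra}=\galgebra\oplus\thetaalgebra$ has anchor $\rho_{\galgebra}\circ\mathrm{pr}_{\galgebra}$ and the bracket fixed by making $\galgebra$ and $\thetaalgebra$ subalgebroids together with $\ba{x}{u}=x\triangleright u$; crossed-module axiom~(1) forces $\ba{u}{v}=\phi(u)\triangleright v$, so the whole $A$-bracket is encoded by $\rho_\galgebra$, $\ba{\cdot}{\cdot}_\galgebra$, $\triangleright$ and $\phi$. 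Dually, $A^*:=A_{\thetaalgebrastar\triangleright\galgebrastar}=\thetaalgebrastar\oplus\galgebrastar$ has anchor vanishing on $\galgebrastar$, and $\bas{\xi}{\alpha}=\xi\triangleright\alpha$, $\bas{\alpha}{\beta}=\phiUprotate(\alpha)\triangleright\beta$. I would also record the two dual actions entering the candidate matched pair $(\galgebra,\thetaalgebrastar)$: $\ip{x\triangleright\xi}{u}=\rho_{\galgebra}(x)\ip{\xi}{u}-\ip{\xi}{x\triangleright u}$ and $\ip{\xi\triangleright x}{\alpha}=\rho_{\thetaalgebrastar}(\xi)\ip{x}{\alpha}-\ip{x}{\xi\triangleright\alpha}$, where the pairing of $A$ with $A^*$ is the one for which $\galgebra\perp\thetaalgebrastar$ and $\thetaalgebra\perp\galgebrastar$.

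By definition, $(A,A^*)$ is a Lie bialgebroid precisely when $\dees$ is a derivation of the Gerstenhaber bracket of $A$; on degree-one elements this is the identity $\dees\ba{e_1}{e_2}=\ba{\dees e_1}{e_2}+\ba{e_1}{\dees e_2}$ in $\sections{\wedge^2 A}$. The left-minus-right defect is $\cinf{M}$-bilinear (a tensoriality I would isolate as one of the Section~5 identities), so it suffices to test it on the three cases $e_1,e_2\in\sections{\galgebra}$; $e_1\in\sections{\galgebra},e_2\in\sections{\thetaalgebra}$; and $e_1,e_2\in\sections{\thetaalgebra}$. To evaluate it I compute $\dees x$ and $\dees u$ from the Chevalley--Eilenberg formula of $A^*$, namely $(\dees e)(\zeta_1,\zeta_2)=\rho_*(\zeta_1)\ip{e}{\zeta_2}-\rho_*(\zeta_2)\ip{e}{\zeta_1}-\ip{e}{\bas{\zeta_1}{\zeta_2}}$, pairing against $\xi\wedge\eta$, $\xi\wedge\beta$ and $\alpha\wedge\beta$ (with $\xi,\eta\in\thetaalgebrastar$, $\alpha,\beta\in\galgebrastar$) to extract the $\thetaalgebra\wedge\thetaalgebra$, $\galgebra\wedge\thetaalgebra$ and $\galgebra\wedge\galgebra$ components. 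A short computation shows, for instance, that $\dees x$ has no $\thetaalgebra\wedge\thetaalgebra$ part, that its $\galgebra\wedge\thetaalgebra$ part is exactly the dual action $\xi\mapsto\xi\triangleright x$, and that its $\galgebra\wedge\galgebra$ part is a $\phiUprotate$-term of the form $\ip{x}{\phiUprotate(\alpha)\triangleright\beta}$; the dual-action formulas and the dual crossed-module axioms are what make these identifications.

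Substituting into the three cases and collecting terms, I expect the contributions carrying a derivative of a pairing function to assemble into the matched-pair anchor identity $\ba{\rho_{\galgebra}(x)}{\rho_{\thetaalgebrastar}(\xi)}=-\rho_{\galgebra}(\xi\triangleright x)+\rho_{\thetaalgebrastar}(x\triangleright\xi)$, and the zeroth-order terms into the two remaining Jacobi-type identities of Definition~\ref{Defn:mathchedpair} for $(\galgebra,\thetaalgebrastar)$; reading the same computation backwards yields the converse. The main obstacle is the mixed case $e_1=x,e_2=u$: it produces the bulk of the terms and couples all four pieces of data at once, and the contributions built from $\ba{u}{v}=\phi(u)\triangleright v$ and its dual $\bas{\alpha}{\beta}=\phiUprotate(\alpha)\triangleright\beta$ must cancel exactly. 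This cancellation is genuine rather than formal: it uses crossed-module axioms~(1) and~(2) for both $\crossedmoduletriple{\thetaalgebra}{\phi}{\galgebra}$ and $\crossedmoduletriple{\galgebrastar}{\phiUprotate}{\thetaalgebrastar}$ together with $\phiUprotate=-\phi^*$, and tracking the pairing signs through the dual actions is where the care lies. As a structural check one may pass to the double: by Theorem~\ref{Thm:double} a Lie bialgebroid structure on $(A,A^*)$ is the same as a Courant algebroid on $A\oplus A^*$ with Dorfman bracket~\eqref{Eqt:Dorfmanbracket}, and inside it $\galgebra\oplus\thetaalgebrastar$ and $\thetaalgebra\oplus\galgebrastar$ are transverse Lagrangian subbundles whose being Dirac --- i.e.\ closed under the Dorfman bracket, so that $\galgebra\oplus\thetaalgebrastar$ becomes the Lie algebroid $\galgebra\bowtie\thetaalgebrastar$ --- is exactly the matched-pair condition.
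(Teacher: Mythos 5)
Your proposal is correct in outline, and it takes a genuinely different route from the paper. The paper never tests the derivation property of $\dees$ directly; both implications run through the double $E=A_{\galgebra\triangleright\theta}\oplus A_{\theta^*\triangleright\galgebra^*}$ with the Dorfman bracket \eqref{Eqt:Dorfmanbracket}. For ``only if'', it computes the restricted Dorfman brackets (Proposition \ref{Prop:D'sdoublebracket}), observes that $\galgebra\oplus\thetaalgebrastar$ is then a Dirac structure of the Courant algebroid $E$ containing $\galgebra$ and $\thetaalgebrastar$ as Lie subalgebroids, and invokes Lemma \ref{lem:matchedpair}; for ``if'', it verifies the Courant axioms \eqref{CA:1}--\eqref{CA:6} for $E$ by a long case analysis and then uses Theorem \ref{Thm:double} to conclude that $(A_{\galgebra\triangleright\theta},A_{\theta^*\triangleright\galgebra^*})$ is a Lie bialgebroid. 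You instead verify the identity $\dees\ba{e_1}{e_2}=\ba{\dees e_1}{e_2}+\ba{e_1}{\dees e_2}$ on sections, split along the decomposition $A=\galgebra\oplus\thetaalgebra$; your component formulas for $\dees x$ (no $\thetaalgebra\wedge\thetaalgebra$ part, the dual action in the mixed part, a $\phiUprotate$-term in $\galgebra\wedge\galgebra$) are correct, and the resulting case analysis is of roughly the same size as the paper's Courant-axiom verification, driven by the same ingredients (the dualized crossed-module axioms and $\phiUprotate=-\phi^*$). What your route buys: it is self-contained, using only the definition of a Lie bialgebroid rather than Courant algebroids and the Liu--Weinstein--Xu correspondence, and both implications fall out of a single computation read in both directions. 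What the paper's route buys: a nearly immediate ``only if'' direction, and intermediate output (Proposition \ref{Prop:D'sdoublebracket} and the Courant structure on $E$) that is reused later, both in the remark that the Dorfman bracket does not close on $\galgebra\oplus\galgebrastar$ and in the proofs surrounding the co-quadratic Manin triple theorem. Note that your closing ``structural check'' is, in substance, exactly the paper's ``only if'' argument.

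Two small repairs. First, do not justify the reduction to the three summand cases by $\cinf{M}$-bilinearity of the defect: expanding $D(e_1,fe_2)$ shows the defect is tensorial in $e_2$ precisely when the function-degree compatibility $\dees\big(\rho(e_1)f\big)=\ba{\dees e_1}{f}+\ba{e_1}{\dees f}$ holds, which is part of what is being proved, so tensoriality is not available a priori. You also do not need it: $\reals$-bilinearity alone reduces to pairs of sections of the summands, and once the degree-one identity holds for all pairs of sections (including $fe_2$), the same expansion yields the function-degree identity, whence the all-degrees derivation property by the graded Leibniz rule; this standard Kosmann-Schwarzbach reduction should be stated explicitly, since the paper's definition of Lie bialgebroid demands the derivation property in all degrees. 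Second, in the structural check, closure of $\galgebra\oplus\thetaalgebrastar$ under the Dorfman bracket is automatic from Proposition \ref{Prop:D'sdoublebracket} and is not where the matched-pair condition lives; what the matched pair encodes is that $E$ is genuinely a Courant algebroid, so that the closed Lagrangian subbundle $\galgebra\oplus\thetaalgebrastar$ inherits an honest Lie algebroid structure to which Lemma \ref{lem:matchedpair} can be applied.
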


We are in  {the} position to introduce a new notion.

\begin{defn}\label{Defn:coquadratic}
A  \textbf{co-quadratic} Lie algebroid is a Lie algebroid $K$ over $M$
together with a symmetric, bilinear (possibly degenerate) $2$-form
on the dual vector space:
$$
\bookpairing{\tobefilledin,\tobefilledin}:~\quad \Gamma(K^*)\times \Gamma(K^*)\lon
C^\infty(M),
$$
which is $K$-invariant, i.e., $\forall~ X\in \Gamma(K)$,
$\gamma,\gamma'\in\Gamma(K^*)$,
\begin{equation}\label{Eqt:bookpairinginvariant}
\rho(X)\bookpairing{\gamma,\gamma'}=
\bookpairing{\ld{X}^{K}\gamma,\gamma'}+\bookpairing{\gamma,\ld{X}^{K}\gamma'},
\end{equation}
where $\rho$ is the anchor of $K$.
\end{defn}

The following proposition can be proved easily.

\begin{prop}A co-quadratic Lie algebroid structure over the Lie algebroid $K$ is determined by an element $C\in K\symmetricproduct K$ (where $\symmetricproduct$ denotes the symmetric tensor product) which is $K$-invariant: $ \ld{X}^{K}C=0$, $\forall X\in\sections{K}$.
\end{prop}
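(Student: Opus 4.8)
The plan is to exhibit an explicit bijection between symmetric forms on $\Gamma(K^*)$ and symmetric sections $C\in\Gamma(K\symmetricproduct K)$, and then to match the two invariance conditions through the Leibniz property of the Lie algebroid Cartan calculus.

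First, since the form $\bookpairing{\tobefilledin,\tobefilledin}$ is $C^\infty(M)$-bilinear, it is tensorial, so by the usual tensoriality principle it is determined by a bundle map and hence by a section $C\in\Gamma(K\symmetricproduct K)$, via $\bookpairing{\gamma,\gamma'}=\pairing{C}{\gamma\otimes\gamma'}$; symmetry of the form corresponds to $C$ lying in the symmetric part $\Gamma(K\symmetricproduct K)\subset\Gamma(K\otimes K)$. Conversely every such $C$ defines a symmetric form. This gives the desired bijection, and it remains only to show that the $K$-invariance \eqref{Eqt:bookpairinginvariant} of the form is equivalent to $\ld{X}^K C=0$.

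Second, I would record the fundamental duality identity of the Cartan calculus: extending $\ld{X}^K$ to $\Gamma(K)$ by $\ld{X}^K Y=\ba{X}{Y}$, one has for all $X,Y\in\Gamma(K)$ and $\gamma\in\Gamma(K^*)$
\[
\rho(X)\pairing{\gamma}{Y}=\pairing{\ld{X}^K\gamma}{Y}+\pairing{\gamma}{\ba{X}{Y}},
\]
which follows from $\ld{X}^K=\ii{X}\dee+\dee\ii{X}$, the commutation $\ld{X}^K\ii{Y}-\ii{Y}\ld{X}^K=\ii{\ba{X}{Y}}$, and $\ld{X}^K f=\rho(X)f$ on functions. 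Extending $\ld{X}^K$ to $\Gamma(K\otimes K)$ and $\Gamma(K^*\otimes K^*)$ as a derivation (it then commutes with the swap, hence preserves symmetric tensors), the same argument applied to the full contraction yields
\[
\rho(X)\pairing{C}{\gamma\otimes\gamma'}=\pairing{\ld{X}^K C}{\gamma\otimes\gamma'}+\pairing{C}{\ld{X}^K(\gamma\otimes\gamma')}.
\]

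Finally, since $\ld{X}^K(\gamma\otimes\gamma')=(\ld{X}^K\gamma)\otimes\gamma'+\gamma\otimes(\ld{X}^K\gamma')$, the last term above equals $\bookpairing{\ld{X}^K\gamma,\gamma'}+\bookpairing{\gamma,\ld{X}^K\gamma'}$. Comparing with \eqref{Eqt:bookpairinginvariant}, the invariance of the form holds for all $\gamma,\gamma'\in\Gamma(K^*)$ if and only if $\pairing{\ld{X}^K C}{\gamma\otimes\gamma'}=0$ for all such $\gamma,\gamma'$; since the symmetric decomposables $\gamma\symmetricproduct\gamma'$ span $\Gamma(K^*\symmetricproduct K^*)$ and pair nondegenerately with $\Gamma(K\symmetricproduct K)$, this is equivalent to $\ld{X}^K C=0$ for every $X\in\Gamma(K)$. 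The only substantive step is the contraction identity of the second paragraph; the remaining ingredients are the tensoriality principle and routine bookkeeping with the symmetric tensor product, so I do not expect a real obstacle beyond fixing conventions (e.g. the normalization in $\bookpairing{\gamma,\gamma'}=\pairing{C}{\gamma\otimes\gamma'}$).
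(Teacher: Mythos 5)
Your proof is correct: the identification of the $C^\infty(M)$-bilinear (hence tensorial) symmetric form with a section $C\in\Gamma(K\symmetricproduct K)$, followed by the contraction identity $\rho(X)\pairing{C}{\gamma\otimes\gamma'}=\pairing{\ld{X}^K C}{\gamma\otimes\gamma'}+\pairing{C}{\ld{X}^K(\gamma\otimes\gamma')}$ coming from the derivation extension of $\ld{X}^K$, is precisely the standard argument. The paper offers no proof at all (it only remarks that the proposition ``can be proved easily''), so your write-up supplies exactly the routine verification that remark alludes to, with no gap.
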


\begin{defn}\label{Defn:DiracStrs}
Let $K$ be a co-quadratic Lie algebroid. A \textbf{Dirac structure} of $K$ is a  Lie subalgebroid $D\subset K$
such that the null space  $$D^0=\set{\gamma\in
 K^*_p~|~p\in M,\pairing{\gamma}{d}=0,\forall d\in K_p}$$ is isotropic with respect to
$\bookpairing{\tobefilledin,\tobefilledin}$.
\end{defn}
\begin{defn}\label{Defn:coquadraticManintriple}
We call $(K,P,Q)$ a co-quadratic Manin triple if $K$ {is} a co-quadratic Lie algebroid, $P$ and $Q$ are two Dirac structures of $K$  which are transverse to
each other, i.e., $K=P\oplus Q$.
\end{defn}
The following theorem is the second main result of this paper, which can be regarded as the crossed module version of Manin triples.

\begin{thm}
\label{Thm:crossedmoduledouble} There is a one-one correspondence between Lie
bialgebroid crossed modules and co-quadratic Manin triples.
\end{thm}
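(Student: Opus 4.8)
The plan is to route the correspondence through Theorem~\ref{Thm:mathchedpairiff}, which already identifies Lie bialgebroid crossed modules with matched pairs $(\galgebra,\thetaalgebrastar)$, and then to match this matched-pair datum (together with the map $\phi$) against co-quadratic Manin triples. Given a Lie bialgebroid crossed module, Theorem~\ref{Thm:mathchedpairiff} produces a matched pair $(\galgebra,\thetaalgebrastar)$, so by Lemma~\ref{lem:matchedpair} the bundle $K:=\galgebra\bowtie\thetaalgebrastar$ is a Lie algebroid in which $P:=\galgebra$ and $Q:=\thetaalgebrastar$ are transverse Lie subalgebroids; here $K^*=\galgebrastar\oplus\thetaalgebra$. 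As the co-quadratic element I would take $C\in\sections{K\symmetricproduct K}$ to be $\phi$ regarded as a section of $\galgebra\otimes\thetaalgebrastar\subset K\otimes K$ and symmetrized, so that the associated form is $\bookpairing{\xi+u,\xi'+u'}=\pairing{\xi}{\phi(u')}+\pairing{\xi'}{\phi(u)}$ for $\xi,\xi'\in\sections{\galgebrastar}$ and $u,u'\in\sections{\thetaalgebra}$. The two contractions of $C$ return $\phi$ and $\phi^*$; since $\phiUprotate=-\phi^*$ by definition, this recovers both crossed-module maps.

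The Dirac axioms are then immediate: because $C$ has pure bidegree in $\galgebra\otimes\thetaalgebrastar$, its $\galgebra\symmetricproduct\galgebra$ and $\thetaalgebrastar\symmetricproduct\thetaalgebrastar$ components vanish, so the null spaces $P^0=\thetaalgebra$ and $Q^0=\galgebrastar$ are isotropic and $P,Q$ are Dirac structures of the co-quadratic Lie algebroid $K$. Conversely, starting from a co-quadratic Manin triple $(K,P,Q)$, Lemma~\ref{lem:matchedpair} makes $(P,Q)$ a matched pair; setting $\galgebra=P$ and $\thetaalgebrastar=Q$ (hence $\thetaalgebra=Q^*$, $\galgebrastar=P^*$), the isotropy of $P^0$ and $Q^0$ forces $C\in\sections{\galgebra\otimes\thetaalgebrastar}$, which is exactly the datum of $\phi:\thetaalgebra\to\galgebra$ and its partner $\phiUprotate=-\phi^*$. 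The $\galgebra$-action on $\thetaalgebra$ and the $\thetaalgebrastar$-action on $\galgebrastar$ are taken to be the duals of the matched-pair actions, as prescribed in Theorem~\ref{Thm:mathchedpairiff}.

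The technical core of both directions is the equivalence between $K$-invariance of $C$ and the crossed-module compatibility conditions, which I would prove by expanding $\ld{X}^{K}C$ for $X\in\sections{K}$ with the matched-pair bracket of Lemma~\ref{lem:matchedpair} and separating the $\galgebra\symmetricproduct\galgebra$, $\galgebra\otimes\thetaalgebrastar$, and $\thetaalgebrastar\symmetricproduct\thetaalgebrastar$ components. Writing $C=\sum_i a_i\symmetricproduct b_i$ with $a_i\in\sections{\galgebra}$, $b_i\in\sections{\thetaalgebrastar}$ and using $\rho_{\galgebra}\circ\phi=0$ and $\rho_{\thetaalgebrastar}\circ\phiUprotate=0$, one finds for $X\in\sections{\galgebra}$ that the mixed component of $\ld{X}^{K}C$ reproduces the equivariance $\phi(X\triangleright u)=\ba{X}{\phi(u)}$, while the $\galgebra\symmetricproduct\galgebra$ component --- arising from the $\galgebra$-valued part of $\ba{X}{b_i}$ in $K$ --- yields the antisymmetry $\phiUprotate(\xi)\triangleright\eta=-\phiUprotate(\eta)\triangleright\xi$ of the dual crossed module. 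Symmetrically, invariance along $X\in\sections{\thetaalgebrastar}$ gives both the equivariance of $\phiUprotate$ and the antisymmetry $\phi(u)\triangleright v=-\phi(v)\triangleright u$. These are precisely the hypotheses of Lemma~\ref{Lem:ensentialcrossedmoduleLiealgebra}, which reconstructs the Lie algebra bundle structures on $\thetaalgebra$ and $\galgebrastar$ and turns both $\crossedmoduletriple{\thetaalgebra}{\phi}{\galgebra}$ and $\crossedmoduletriple{\galgebrastar}{\phiUprotate}{\thetaalgebrastar}$ into Lie algebroid crossed modules; Theorem~\ref{Thm:mathchedpairiff} then upgrades the pair to a Lie bialgebroid crossed module.

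It remains to check that $\phi\mapsto C$ and $C\mapsto\phi$ are mutually inverse, which is clear since both act as the identity on the underlying matched pair and $C$ is literally $\phi$ viewed as a symmetric tensor. I expect the main obstacle to be the bookkeeping in this invariance computation: the crucial and slightly surprising point is that invariance of $C$ in the $\thetaalgebrastar$ directions controls the antisymmetry defect of the \emph{first} crossed module (through the $\thetaalgebrastar\symmetricproduct\thetaalgebrastar$ component), and dually for the $\galgebra$ directions, so one must track carefully how the $P\leftrightarrow Q$ duality interchanges the two crossed modules and keep the sign in $\phiUprotate=-\phi^*$ consistent with the symmetry of $\bookpairing{\cdot,\cdot}$.
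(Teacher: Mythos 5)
Your proposal is correct and follows essentially the same route as the paper: both directions are routed through Theorem~\ref{Thm:mathchedpairiff}, the co-quadratic structure is identified with $\phi$ via the symmetric pairing $\bookpairing{\xi+u,\xi'+u'}=\pairing{\xi}{\phi(u')}+\pairing{\xi'}{\phi(u)}$, and the technical core — that $K$-invariance decomposes into the equivariance of $\phi$, $\phiUprotate$ and the two antisymmetry conditions, feeding into Lemma~\ref{Lem:ensentialcrossedmoduleLiealgebra} — is exactly the content of the paper's Propositions~\ref{Prop:invariancy}, \ref{Prop:PQgivesLieBialgebraCrossedModule} and the proofs of Propositions~\ref{Prop:manin3} and \ref{Prop:manin3 reverse}. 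The only cosmetic difference is that you expand $\ld{X}^{K}C$ on the tensor $C\in\sections{K\symmetricproduct K}$ while the paper expands the invariance of the dual form via the formulas for $\ld{x}^{K}$ and $\ld{\alpha}^{K}$ on $\Gamma(K^*)$, which are equivalent formulations of the same computation.
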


This theorem is reformulated by  the following two Propositions{:} \ref{Prop:manin3} and \ref{Prop:manin3 reverse}.

\begin{prop}\label{Prop:manin3} Given a co-quadratic Manin triple $(K,P,Q)$, there  {is} a Lie bialgebroid
crossed module structure on the dual crossed modules
$\crossedmoduletriple{P^0 }{\phi}{P}$ and $\crossedmoduletriple{Q^0
}{\phiUprotate}{Q}$, where $\phi:~P^0\lon P$ is determined by
\begin{equation*}
\pairing{\xi}{\phi(u)}\defbe
\bookpairing{\xi,u},\quad~\forall \xi\in \Gamma(Q^0),u\in \Gamma(P^0).
\end{equation*}
(We treat $P^0=Q^*$ and $Q^0=P^*$ in the standard way.)
\end{prop}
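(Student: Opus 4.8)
The plan is to reduce the whole statement to Theorem~\ref{Thm:mathchedpairiff}, so that the real content becomes the verification that the two dual triples are genuine Lie algebroid crossed modules. Since $K=P\oplus Q$ with $P$ and $Q$ Lie subalgebroids, Lemma~\ref{lem:matchedpair} gives a matched pair $(P,Q)$ with mutual actions $\triangleright$; dualizing the $P$-action on $Q$ and the $Q$-action on $P$ produces exactly the $P$-action on $P^0=Q^*$ and the $Q$-action on $Q^0=P^*$ demanded by Theorem~\ref{Thm:mathchedpairiff}. I encode the co-quadratic form by the $K$-invariant element $C\in\Gamma(K\symmetricproduct K)$ and decompose it along $K=P\oplus Q$ as $C=C_{PP}+C_{PQ}+C_{QQ}$. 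The isotropy of $P^0$ forces $C_{QQ}=0$ and the isotropy of $Q^0$ forces $C_{PP}=0$, so $C=C_{PQ}\in\Gamma(P\symmetricproduct Q)$. The induced bundle map $K^*\to K$ then carries $P^0$ into $P$ and $Q^0$ into $Q$; one checks its restrictions are $\phi$ (satisfying $\pairing{\xi}{\phi(u)}=\bookpairing{\xi,u}$) and $\phiUprotate=-\phi^*$, the opposite sign being dictated by the symmetry of $C$.

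To show $\crossedmoduletriple{P^0}{\phi}{P}$ is a crossed module I invoke Lemma~\ref{Lem:ensentialcrossedmoduleLiealgebra}, for which it suffices to establish (i) $\phi(x\triangleright u)=[x,\phi(u)]$ and (ii) $\phi(u)\triangleright v=-\phi(v)\triangleright u$. For (i) I expand the invariance \eqref{Eqt:bookpairinginvariant} with $X=x\in\Gamma(P)$, $\gamma=\xi\in\Gamma(Q^0)$, $\gamma'=u\in\Gamma(P^0)$. Using the matched-pair formula for $[x,\cdot]_K$ one finds $\ld{x}^K u=x\triangleright u\in\Gamma(Q^*)$, while the extra $Q^*$-component appearing in $\ld{x}^K\xi$ is isotropic against $u$ and is killed by the isotropy of $P^0$; the remaining $P^*$-part reproduces $[x,\phi(u)]$, so the identity collapses to (i).

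The delicate point is (ii), which requires first the anchor condition $\rho_P\circ\phi=0$. I obtain this from a ``symbol'' consequence of invariance: comparing $\ld{fX}^K C=0$ with $f\,\ld{X}^K C=0$ yields, for $C=\sum_a p_a\symmetricproduct q_a$, the relation $\sum_a(\rho_P(p_a)f)\,q_a=0$, i.e. $\sum_a\pairing{\rho_P^*\alpha}{p_a}\,q_a=0$ for every one-form $\alpha$; this gives $\bookpairing{\rho_P^*\alpha,u}=0$ and hence $\rho_P(\phi(u))=0$. Granting it, I pair $\phi(u)\triangleright v+\phi(v)\triangleright u$ with an arbitrary $Y\in\Gamma(Q)$: the anchor terms vanish by $\rho_P\circ\phi=0$, and the surviving terms cancel because of the $Q\symmetricproduct Q$-part of $\ld{Y}^K C=0$ (the relation $\sum_a(p_a\triangleright Y)\symmetricproduct q_a=0$), which is precisely the skew-symmetry (ii). The symmetric argument with $P\leftrightarrow Q$, carrying the extra sign of $\phiUprotate=-\phi^*$, shows that $\crossedmoduletriple{Q^0}{\phiUprotate}{Q}$ is likewise a crossed module, and by construction their defining actions are the duals of the matched-pair actions.

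Finally, since $(\galgebra,\thetaalgebrastar)=(P,Q)$ is a matched pair and the crossed-module actions are dual to the matched-pair actions, Theorem~\ref{Thm:mathchedpairiff} yields that $\crossedmoduletriple{P^0}{\phi}{P}$ and $\crossedmoduletriple{Q^0}{\phiUprotate}{Q}$ form a Lie bialgebroid crossed module, as required. I expect the main obstacle to be the bookkeeping in the two preceding paragraphs: extracting all of the crossed-module data (the morphism property, the anchor condition, and the skew-symmetry) from the single invariance \eqref{Eqt:bookpairinginvariant}, while systematically decomposing the $K$-Lie derivatives through the matched pair and keeping track of the signs and the possible degeneracy of $\bookpairing{\tobefilledin,\tobefilledin}$.
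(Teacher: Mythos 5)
Your proposal is correct, and its skeleton is the same as the paper's: reduce Proposition \ref{Prop:manin3} to Theorem \ref{Thm:mathchedpairiff} by showing that the invariance condition \eqref{Eqt:bookpairinginvariant}, combined with the two isotropy (Dirac) conditions, forces the dual triples to be Lie algebroid crossed modules with mutually dual actions. The execution differs, though. The paper never touches the tensor $C$: it decomposes the Lie derivatives on $K^*$ through the matched pair, $\ld{x}^{K}(\xi+u)=\ld{x}^{P}\xi+(x\vee\xi+x\moduleaction u)$ and $\ld{\alpha}^{K}(\xi+u)=(\alpha\moduleaction\xi+\alpha\vee u)+\ld{\alpha}^{Q}u$, and observes that invariance becomes exactly the four pairing identities of Proposition \ref{Prop:PQgivesLieBialgebraCrossedModule}, which via Proposition \ref{Prop:invariancy} and Lemma \ref{Lem:ensentialcrossedmoduleLiealgebra} are the crossed-module axioms; in that route the anchor condition $\rho_P\circ\phi=0$ needs no separate argument, since it follows from the skew-symmetry hypothesis of Lemma \ref{Lem:ensentialcrossedmoduleLiealgebra} via the Leibniz rule. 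You instead work upstairs with the invariant element $C\in\Gamma(K\symmetricproduct K)$, use the Dirac conditions to kill $C_{PP}$ and $C_{QQ}$, and read the axioms off components of $\ld{}C=0$, with a separate $C^\infty$-linearity (``symbol'') argument for the anchor condition; this is a valid dual formulation of the same computation, and it is more explicit than the paper about where isotropy enters, at the price of the extra anchor argument. One blemish: your claim that the restriction of the map $K^*\to K$ induced by $C$ to $Q^0$ equals $\phiUprotate=-\phi^*$, ``the opposite sign being dictated by the symmetry of $C$,'' is backwards --- symmetry of $C$ gives $+\phi^*$; the minus sign is purely the convention of Definition \ref{Defn:bi-crossedmoduleofLiealgebra}. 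This is harmless, because both hypotheses of Lemma \ref{Lem:ensentialcrossedmoduleLiealgebra} are invariant under $\phi\mapsto-\phi$, so $\crossedmoduletriple{Q^0}{-\phi^*}{Q}$ is a crossed module exactly when $\crossedmoduletriple{Q^0}{\phi^*}{Q}$ is, but the sentence as written is wrong and should be corrected.
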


For the converse of Proposition \ref{Prop:manin3}, we have
\begin{prop}\label{Prop:manin3 reverse} Given a Lie bialgebroid crossed module structure on the
dual crossed modules
$\crossedmoduletriple{\thetaalgebra}{\phi}{\galgebra} $ and
$\crossedmoduletriple{\galgebrastar}{\phiUprotate
}{\thetaalgebrastar}$, there  {is} a co-quadratic Lie algebroid
$$
K=\galgebra\bowtie \thetaalgebrastar
$$
with the $2$-form $\bookpairing{\tobefilledin,\tobefilledin} $ on
$K^*=\galgebrastar\oplus\thetaalgebra$ defined by
$$
\bookpairing{\xi_1+
u_1,\xi_2+u_2}=\pairing{\xi_1}{\phi(u_2)}+\pairing{\xi_2}{\phi(u_1)},\qquad\forall
\xi_1,\xi_2\in\Gamma(\galgebrastar),u_1,u_2\in\sections{\thetaalgebra}.
$$
Moreover, $\galgebra$ and $\thetaalgebrastar$ are transverse Dirac
structures in $K$.
\end{prop}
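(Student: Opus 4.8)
The plan is to construct the co-quadratic Lie algebroid $K$ from the matched pair supplied by Theorem~\ref{Thm:mathchedpairiff}, to establish co-quadraticity as the invariance identity \eqref{Eqt:bookpairinginvariant}, and finally to read off the two Dirac structures.

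First I would invoke Theorem~\ref{Thm:mathchedpairiff}: since $\crossedmoduletriple{\thetaalgebra}{\phi}{\galgebra}$ and $\crossedmoduletriple{\galgebrastar}{\phiUprotate}{\thetaalgebrastar}$ form a Lie bialgebroid crossed module, the pair $(\galgebra,\thetaalgebrastar)$ is a matched pair of Lie algebroids, the $\galgebra$-action on $\thetaalgebrastar$ being dual to the given $\galgebra$-action on $\thetaalgebra$ and the $\thetaalgebrastar$-action on $\galgebra$ dual to the given $\thetaalgebrastar$-action on $\galgebrastar$. Lemma~\ref{lem:matchedpair} then furnishes the Lie algebroid $K=\galgebra\bowtie\thetaalgebrastar$ on the vector bundle $\galgebra\oplus\thetaalgebrastar$, with anchor $\rho_\galgebra+\rho_{\thetaalgebrastar}$ and the matched-pair bracket. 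On the dual $K^*=\galgebrastar\oplus\thetaalgebra$, the assignment $\bookpairing{\xi_1+u_1,\xi_2+u_2}=\pairing{\xi_1}{\phi(u_2)}+\pairing{\xi_2}{\phi(u_1)}$ is manifestly symmetric, and since $\phi$ is a bundle map and the canonical pairing is $\cinf{M}$-bilinear, it is $\cinf{M}$-bilinear; hence it descends to a genuine (possibly degenerate) symmetric $2$-form on the fibres of $K^*$.

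The crux is to check that this $2$-form is $K$-invariant, i.e.\ that \eqref{Eqt:bookpairinginvariant} holds for all $X\in\sections{K}$ and $\gamma,\gamma'\in\sections{K^*}$, which is precisely the assertion that $K$ is co-quadratic. I would do this through the symmetric tensor $C\in\sections{K\symmetricproduct K}$ dual to $\bookpairing{\tobefilledin,\tobefilledin}$, whose invariance $\ld{X}^{K}C=0$ characterizes co-quadraticity by the preceding proposition: a short computation in local frames identifies $C$ with the crossed-module map $\phi$ itself, regarded as a section of $\galgebra\otimes\thetaalgebrastar\subset K\otimes K$ (symmetric because $\galgebra$ and $\thetaalgebrastar$ are complementary summands). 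Expanding $\ld{X}^{K}C=0$ by the Leibniz rule over $\otimes$, with $\ld{X}^{K}$ acting on sections of $K$ as the matched-pair bracket $\ba{X}{\cdot}$, reduces the invariance to the two crossed-module equivariance relations $\phi(x\triangleright u)=\ba{x}{\phi(u)}$ and $\phiUprotate(\beta\triangleright\xi)=\ba{\beta}{\phiUprotate(\xi)}$ (conditions~(2) of the two crossed modules), combined with the duality defining the four actions. Equivalently one may expand \eqref{Eqt:bookpairinginvariant} directly, computing $\ld{X}^{K}$ on $K^*=\galgebrastar\oplus\thetaalgebra$ from the matched-pair bracket; the two routes amount to the same bookkeeping. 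This verification --- disentangling the four dual actions and the matched-pair cross terms so that the crossed-module relations make the two sides coincide --- is the main obstacle; everything else is formal.

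Finally I would extract the Manin triple. Transversality $K=\galgebra\oplus\thetaalgebrastar$ is built in, and $\galgebra$ and $\thetaalgebrastar$ are Lie subalgebroids of $K$ by the matched-pair construction (Lemma~\ref{lem:matchedpair}). Their annihilators in $K^*=\galgebrastar\oplus\thetaalgebra$ are $\galgebra^{0}=\thetaalgebra$ and $(\thetaalgebrastar)^{0}=\galgebrastar$. Since $\bookpairing{\tobefilledin,\tobefilledin}$ pairs only the $\galgebrastar$-slot against the $\thetaalgebra$-slot, it vanishes identically on $\thetaalgebra\times\thetaalgebra$ and on $\galgebrastar\times\galgebrastar$; hence both annihilators are isotropic, so $\galgebra$ and $\thetaalgebrastar$ are Dirac structures of the co-quadratic Lie algebroid $K$. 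Therefore $(K,\galgebra,\thetaalgebrastar)$ is a co-quadratic Manin triple, as claimed.
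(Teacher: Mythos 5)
Your proof follows the same architecture as the paper's own: Theorem~\ref{Thm:mathchedpairiff} gives the matched pair $(\galgebra,\thetaalgebrastar)$ and hence the Lie algebroid $K=\galgebra\bowtie\thetaalgebrastar$ via Lemma~\ref{lem:matchedpair}, the $2$-form is clearly symmetric and tensorial, and your Dirac/transversality discussion at the end is correct (it is the paper's ``obvious'' step). The gap is in your crux step: it is \emph{not} true that $\ld{X}^{K}C=0$ reduces to the two equivariance relations $\phi(x\triangleright u)=\ba{x}{\phi(u)}$ and $\phiUprotate(\beta\triangleright\xi)=\ba{\beta}{\phiUprotate(\xi)}$ together with the duality of the actions. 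Write $\phi=\sum_i\alpha_i\otimes x_i\in\Gamma(\thetaalgebrastar\otimes\galgebra)$, so that $C=\sum_i(\alpha_i\otimes x_i+x_i\otimes\alpha_i)$. For $x\in\sections{\galgebra}$ the matched-pair bracket is $\ba{x}{\alpha_i}_{K}=x\triangleright\alpha_i-\alpha_i\triangleright x$, so besides the $\thetaalgebrastar\otimes\galgebra$-components (which are indeed equivalent to condition (2) of $\crossedmoduletriple{\thetaalgebra}{\phi}{\galgebra}$), the expansion of $\ld{x}^{K}C$ produces a component in $\Gamma(\galgebra\otimes\galgebra)$:
\begin{equation*}
-\sum_i\bigl((\alpha_i\triangleright x)\otimes x_i+x_i\otimes(\alpha_i\triangleright x)\bigr).
\end{equation*}
Pairing it with $\xi\otimes\eta$, $\xi,\eta\in\Gamma(\galgebrastar)$, and using the $C^{\infty}(M)$-linearity of the action in the acting section, the duality defining the $\thetaalgebrastar$-action on $\galgebra$, and $\rho_{\theta^*}\circ\phiUprotate=0$, the vanishing of this component is equivalent to
\begin{equation*}
\phiUprotate(\xi)\triangleright\eta+\phiUprotate(\eta)\triangleright\xi=0,
\end{equation*}
i.e.\ to the skew-symmetrized Peiffer identity $\phiUprotate(\xi)\triangleright\eta=\ba{\xi}{\eta}$ of the crossed module $\crossedmoduletriple{\galgebrastar}{\phiUprotate}{\thetaalgebrastar}$ --- condition (1), which neither condition (2) nor duality can deliver. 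Symmetrically, $\ld{\alpha}^{K}C$ for $\alpha\in\Gamma(\thetaalgebrastar)$ has a $\thetaalgebrastar\otimes\thetaalgebrastar$-component whose vanishing is equivalent to $\phi(u)\triangleright v+\phi(v)\triangleright u=0$, condition (1) of $\crossedmoduletriple{\thetaalgebra}{\phi}{\galgebra}$.

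This is precisely the point the paper's own (very short) proof is careful about: it cites Proposition~\ref{Prop:invariancy}, where invariance of the pairing under each factor is shown to be equivalent to the \emph{four} identities \eqref{Eqn:t1}, \eqref{Eqn:t3} and their duals --- \eqref{Eqn:t1} encodes condition (2), but the $\vee$-antisymmetry \eqref{Eqn:t3} encodes condition (1) --- and then matches these four identities with the invariance condition \eqref{Eqt:bookpairinginvariant} through the decomposition of $\ld{x}^{K}$ and $\ld{\alpha}^{K}$ on $K^{*}=\galgebrastar\oplus\thetaalgebra$ computed in the proof of Proposition~\ref{Prop:manin3}. Your tensorial route through $C$ is perfectly viable and amounts to the same bookkeeping, but to close it you must additionally invoke the Peiffer identities and the relations $\rho_{\galgebra}\circ\phi=0$, $\rho_{\theta^*}\circ\phiUprotate=0$; as written, your verification fails exactly on the diagonal components exhibited above.
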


 The following facts are direct consequences.

\begin{cor}Any Lie bialgebroid crossed module structure of the form
$\crossedmoduletriple{\thetaalgebra}{0}{ \galgebra}$ is equivalent to
assign to $(\galgebra,\thetaalgebrastar)$ a Lie algebroid matched pair structure.
\end{cor}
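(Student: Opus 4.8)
The plan is to deduce this corollary directly from Theorem~\ref{Thm:mathchedpairiff} by examining what the crossed module axioms say once the morphism $\phi$ is required to vanish. The guiding idea is that $\phi=0$ collapses all of the fiberwise Lie algebra data, so that a Lie bialgebroid crossed module of this shape carries exactly the data of a matched pair and nothing more.

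First I would unpack the hypothesis $\phi=0$ for the crossed module $\crossedmoduletriple{\thetaalgebra}{0}{\galgebra}$. Defining condition (1), $\phi(u)\triangleright v=[u,v]$, forces $[u,v]=0$ for all $u,v\in\sections{\thetaalgebra}$, so that $\thetaalgebra$ is an \emph{abelian} Lie algebra bundle; condition (2), $\phi(x\triangleright u)=[x,\phi(u)]$, reduces to the tautology $0=0$ and imposes nothing. Hence the only surviving datum is the $\galgebra$-representation $\triangleright:\galgebra\to\mathfrak{D}(\thetaalgebra)$. Dually, since $\phiUprotate=-\phi^*=0$, the companion crossed module $\crossedmoduletriple{\galgebrastar}{0}{\thetaalgebrastar}$ forces $\galgebrastar$ to be abelian and retains only the $\thetaalgebrastar$-representation on $\galgebrastar$.

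Next I would reinterpret these two representations by duality. A $\galgebra$-module structure on $\thetaalgebra$ is the same thing as a $\galgebra$-action on its dual $\thetaalgebrastar$, and a $\thetaalgebrastar$-module structure on $\galgebrastar=\galgebra^*$ is the same as a $\thetaalgebrastar$-action on $\galgebra$. These are precisely the two mutual representations entering Definition~\ref{Defn:mathchedpair}. Consequently, specifying a pair of dual crossed modules with $\phi=0$ amounts to nothing more than prescribing the Lie algebroids $\galgebra$ and $\thetaalgebrastar$ together with a $\galgebra$-action on $\thetaalgebrastar$ and a $\thetaalgebrastar$-action on $\galgebra$, i.e. the underlying data of a candidate matched pair $(\galgebra,\thetaalgebrastar)$.

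Finally I would invoke Theorem~\ref{Thm:mathchedpairiff}: with these dual actions, the crossed modules form a Lie bialgebroid crossed module if and only if $(\galgebra,\thetaalgebrastar)$ is a matched pair of Lie algebroids. Because the case $\phi=0$ carries no data beyond the mutual actions, this produces a one-one correspondence between Lie bialgebroid crossed module structures of the form $\crossedmoduletriple{\thetaalgebra}{0}{\galgebra}$ and matched pair structures on $(\galgebra,\thetaalgebrastar)$, which is the assertion. I do not expect a genuine obstacle, since the statement is essentially Theorem~\ref{Thm:mathchedpairiff} restricted to the vanishing-$\phi$ locus; the only point requiring care is to confirm that the crossed module axioms really do trivialize (forcing $\thetaalgebra$ and $\galgebrastar$ to be abelian) so that no hidden compatibility is silently lost in either direction of the correspondence.
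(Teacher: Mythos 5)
Your proof is correct and is essentially the argument the paper intends: the corollary is presented as a direct consequence of Theorem~\ref{Thm:mathchedpairiff}, and your verification that $\phi=0$ trivializes the crossed module axioms (forcing $\thetaalgebra$ and $\galgebrastar$ to be abelian, leaving only the two dual representations as data) is exactly the unpacking needed to make that deduction rigorous. Nothing is missing: both directions of the equivalence follow from the theorem once one observes, as you do, that the surviving data on each side coincide.
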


 \begin{cor}\label{Cor:hinvariant} Given a matched pair $(P,Q)$ as in Definition \ref{Defn:mathchedpair}, and an element $h\in \sections{P\otimes Q}$ which is invariant:
\begin{equation*}
\ba{l}{h}=0,\quad\forall l\in \Gamma(P\bowtie Q),
\end{equation*}
there exists a Lie bialgebroid crossed module structure on the dual crossed modules
$\crossedmoduletriple{Q^* }{\phi}{P}$ and $\crossedmoduletriple{P^*
}{\phiUprotate}{Q}$, where $\phi:~Q^*\lon P$ is determined by
\begin{equation*}
 {\phi(\beta)}\defbe
\inserts_{\beta}h,\quad~\forall \beta\in \Gamma(Q^*).
\end{equation*}
\end{cor}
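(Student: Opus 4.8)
The plan is to realize the given data as a co-quadratic Manin triple and then quote Proposition~\ref{Prop:manin3}. First I would form the double $K=P\bowtie Q$ of the matched pair, which is a Lie algebroid by Lemma~\ref{lem:matchedpair} and in which $P$ and $Q$ are transverse Lie subalgebroids with $K=P\oplus Q$. Since $P\cap Q=0$, the natural symmetrization map $\sections{P\otimes Q}\to\sections{K\symmetricproduct K}$ is injective, and I would let $C\in\sections{K\symmetricproduct K}$ be the image of $h$; concretely $C=\sum_i p_i\symmetricproduct q_i$ when $h=\sum_i p_i\otimes q_i$ with $p_i\in\sections{P}$, $q_i\in\sections{Q}$. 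By construction $C$ lies in the off-diagonal summand $\sections{P\symmetricproduct Q}\subset\sections{K\symmetricproduct K}$, a fact that will be crucial below.

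Next I would transfer the invariance. The Lie algebroid Lie derivative $\ld{X}^{K}$ extends to $\sections{K\symmetricproduct K}$ by the derivation rule $\ld{X}^{K}(a\symmetricproduct b)=\ba{X}{a}\symmetricproduct b+a\symmetricproduct\ba{X}{b}$, and the symmetrization $\sections{K\otimes K}\to\sections{K\symmetricproduct K}$ is $\ld{X}^{K}$-equivariant. Hence the hypothesis $\ba{l}{h}=0$ for all $l\in\sections{P\bowtie Q}=\sections{K}$ forces $\ld{X}^{K}C=0$ for all $X\in\sections{K}$. By the proposition characterizing co-quadratic structures through $K$-invariant elements of $K\symmetricproduct K$ (the proposition following Definition~\ref{Defn:coquadratic}), this endows $K$ with a co-quadratic Lie algebroid structure whose $2$-form $\bookpairing{\tobefilledin,\tobefilledin}$ on $K^*=P^*\oplus Q^*$ is the bilinear form determined by $C$, the invariance condition \eqref{Eqt:bookpairinginvariant} being exactly $\ld{X}^{K}C=0$.

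It then remains to verify that $P$ and $Q$ are Dirac structures in the sense of Definition~\ref{Defn:DiracStrs}. Both are Lie subalgebroids by Lemma~\ref{lem:matchedpair}, so I only need isotropy of the annihilators $P^0=Q^*$ and $Q^0=P^*$. Here the off-diagonal location of $C$ does the work: since $C$ has no component in $P\symmetricproduct P$ or $Q\symmetricproduct Q$, the induced form satisfies $\bookpairing{Q^*,Q^*}=0$ and $\bookpairing{P^*,P^*}=0$, so both annihilators are isotropic. Together with transversality $K=P\oplus Q$ this shows $(K,P,Q)$ is a co-quadratic Manin triple (Definition~\ref{Defn:coquadraticManintriple}). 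Applying Proposition~\ref{Prop:manin3} with $P^0=Q^*$ and $Q^0=P^*$ produces a Lie bialgebroid crossed module on $\crossedmoduletriple{Q^*}{\phi}{P}$ and $\crossedmoduletriple{P^*}{\phiUprotate}{Q}$, which are precisely the dual crossed modules in the statement.

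Finally I would identify $\phi$. Proposition~\ref{Prop:manin3} defines it by $\pairing{\xi}{\phi(\beta)}=\bookpairing{\xi,\beta}$ for $\xi\in\sections{P^*}$, $\beta\in\sections{Q^*}$. Evaluating the form determined by $C\in\sections{P\symmetricproduct Q}$ on such pure covectors, the cross term vanishes because $\xi$ annihilates $Q$ and $\beta$ annihilates $P$, leaving $\bookpairing{\xi,\beta}=\pairing{\xi}{\inserts_{\beta}h}$, and hence $\phi(\beta)=\inserts_{\beta}h$, as claimed. The only genuinely delicate points are the well-definedness and $\CIM$-linearity of $\ld{X}^{K}$ on $\sections{K\symmetricproduct K}$, so that invariance of $h$ really passes to $C$, and the bookkeeping of the normalization convention implicit in $\symmetricproduct$ (fixed here by the form in Proposition~\ref{Prop:manin3 reverse}); everything else is a direct reading-off of the earlier constructions.
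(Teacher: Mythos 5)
Your proof is correct and takes essentially the approach the paper intends: the corollary appears there as a direct consequence of Proposition~\ref{Prop:manin3}, and your construction of the co-quadratic Manin triple $(P\bowtie Q,P,Q)$ from the symmetrization of $h$ (invariance passing to $C$, isotropy of $P^0$ and $Q^0$ from the off-diagonal location of $C$, and the cross-term computation giving $\phi(\beta)=\iota_{\beta}h$) is exactly the verification the paper leaves implicit.
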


Furthermore, we have
\begin{cor}
A Lie bialgebroid crossed module structure on the
dual crossed modules
$\crossedmoduletriple{\thetaalgebra}{\phi}{\galgebra} $ and
$\crossedmoduletriple{\galgebrastar}{\phiUprotate
}{\thetaalgebrastar}$ induces two Lie bialgebroid structures $(A_{\galgebra\triangleright\theta},A_{\theta^*\triangleright\galgebra^*}
 )$, $(\galgebra\bowtie\theta^*,\galgebra^*\oplus\theta)$.
\end{cor}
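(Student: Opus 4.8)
The plan is to reduce both assertions to results already established, since neither requires new computation. First I would observe that the first of the two claimed structures is immediate: by the very Definition \ref{Defn:bi-crossedmoduleofLiealgebra} of a Lie bialgebroid crossed module, the pair $(A_{\galgebra\triangleright\theta},A_{\theta^*\triangleright\galgebra^*})$ \emph{is} a Lie bialgebroid. Thus no argument is needed for it, and the content of the corollary lies entirely in producing the second Lie bialgebroid.

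For the second structure, I would invoke Theorem \ref{Thm:mathchedpairiff}: because $\crossedmoduletriple{\thetaalgebra}{\phi}{\galgebra}$ and $\crossedmoduletriple{\galgebrastar}{\phiUprotate}{\thetaalgebrastar}$ constitute a Lie bialgebroid crossed module, the pair $(\galgebra,\thetaalgebrastar)$ is a matched pair of Lie algebroids, with the two actions given by dualizing the $\galgebra$-action on $\thetaalgebra$ and the $\thetaalgebrastar$-action on $\galgebrastar$. By Lemma \ref{lem:matchedpair}, this matched pair endows the direct sum bundle $\galgebra\oplus\thetaalgebrastar$ with a genuine Lie algebroid structure $\galgebra\bowtie\thetaalgebrastar$, whose dual bundle is canonically identified with $\galgebrastar\oplus\thetaalgebra$.

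It then remains only to promote this Lie algebroid to a Lie bialgebroid by equipping the dual bundle $\galgebrastar\oplus\thetaalgebra$ with the trivial Lie algebroid structure (zero anchor and zero bracket). With this choice the associated degree-one differential $\dees$ on $\sections{\wedge\graded(\galgebra\bowtie\thetaalgebrastar)}$ vanishes identically, so the defining compatibility of a Lie bialgebroid --- that $\dees$ be a derivation of the Gerstenhaber bracket --- holds trivially. This is precisely the matched-pair construction of a Lie bialgebroid with trivial dual recorded in the example following Theorem \ref{Thm:double}, specialized to $P=\galgebra$ and $Q=\thetaalgebrastar$, and it delivers the desired Lie bialgebroid $(\galgebra\bowtie\thetaalgebrastar,\galgebrastar\oplus\thetaalgebra)$.

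Since both parts follow directly from cited results, I do not anticipate a genuine obstacle. The only point deserving attention is the bookkeeping: checking that the matched pair $(\galgebra,\thetaalgebrastar)$ produced by Theorem \ref{Thm:mathchedpairiff} is exactly the one whose double $\galgebra\bowtie\thetaalgebrastar$ is being asserted, and confirming the canonical identification of its dual bundle with $\galgebrastar\oplus\thetaalgebra$ under the natural pairings. Both verifications are routine and carry no analytic difficulty.
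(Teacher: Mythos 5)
Your handling of the first pair is correct and is what the paper intends: $(A_{\galgebra\triangleright\theta},A_{\theta^*\triangleright\galgebra^*})$ is a Lie bialgebroid by Definition \ref{Defn:bi-crossedmoduleofLiealgebra} itself, and Theorem \ref{Thm:mathchedpairiff} together with Lemma \ref{lem:matchedpair} does produce the Lie algebroid $\galgebra\bowtie\thetaalgebrastar$ with underlying dual bundle $\galgebrastar\oplus\thetaalgebra$. The gap is in your last step, where you impose the \emph{trivial} Lie algebroid structure on $\galgebrastar\oplus\thetaalgebra$ by fiat. In this setting $\galgebrastar$ and $\thetaalgebra$ are not structureless bundles: each is a Lie algebra bundle --- this is part of the crossed-module data --- and the structure the corollary refers to is the one $\galgebrastar\oplus\thetaalgebra$ inherits as a Dirac structure of the Courant algebroid $E=A_{\galgebra\triangleright\theta}\oplus A_{\theta^*\triangleright\galgebra^*}$, namely zero anchor together with the fiberwise direct-sum bracket. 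Indeed, from the Dorfman bracket \eqref{Eqt:Dorfmanbracket} and Proposition \ref{Prop:lie2algebra} one finds $u_1\circ u_2=[u_1,u_2]_{\theta}$ for $u_1,u_2\in\sections{\thetaalgebra}$ and $\xi_1\circ\xi_2=[\xi_1,\xi_2]_{\galgebra^*}$ for $\xi_1,\xi_2\in\Gamma(\galgebrastar)$, which are nonzero whenever these Lie algebra bundles are nonabelian; what the proof of Proposition \ref{Prop:D'sdoublebracket}(4) actually establishes is only that the \emph{mixed} brackets $\xi\circ u$ vanish. The genuine content of the corollary is the compatibility of these fiberwise brackets with the matched-pair bracket on $\galgebra\bowtie\thetaalgebrastar$, and the intended proof is exactly the Dirac picture: $\galgebra\oplus\thetaalgebrastar$ and $\galgebrastar\oplus\thetaalgebra$ are transverse Dirac structures of $E$ (Proposition \ref{Prop:D'sdoublebracket}(3),(4) together with the proof of Theorem \ref{Thm:mathchedpairiff}), so Theorem \ref{Thm:double} yields the Lie bialgebroid $(\galgebra\bowtie\thetaalgebrastar,\galgebrastar\oplus\thetaalgebra)$. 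Your argument never touches this compatibility.

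The statement you do prove --- a Lie algebroid paired with its dual carrying the trivial structure is a Lie bialgebroid --- is true but vacuous: it holds for every Lie algebroid and uses nothing of the crossed-module data beyond Theorem \ref{Thm:mathchedpairiff}. It is also not the ``induced'' structure: in the double of your bialgebroid two sections of $\thetaalgebra$ commute, whereas in $E$ their bracket is the fiberwise one, so your bialgebroid is not the one sitting inside the double of $(A_{\galgebra\triangleright\theta},A_{\theta^*\triangleright\galgebra^*})$. I grant that the literal wording of Proposition \ref{Prop:D'sdoublebracket}(4) (``the Dorfmann bracket vanishes'') and the example following Theorem \ref{Thm:double} (where $P^*\oplus Q^*$ is taken trivial because there $P^*$ and $Q^*$ carry no brackets) could suggest your reading; but that wording overstates what its own proof shows, and the remark immediately after the corollary --- which works inside $E$ and contrasts these two Dirac structures with the subbundles $\galgebra\oplus\galgebrastar$ and $\thetaalgebra\oplus\thetaalgebrastar$, on which $\circ$ fails to close --- confirms the Dirac-structure reading. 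The repair is small: keep your first two steps, endow $\galgebrastar\oplus\thetaalgebra$ with its fiberwise bracket and zero anchor, and conclude by Theorem \ref{Thm:double} applied to the transverse Dirac pair inside $E$.
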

\begin{rmk}In fact, the pairs $(\galgebra,\galgebra^*)$  and $(\theta,\theta^*)$ are also Lie bialgebroids. However, in the Courant algebroid $A_{\galgebra\triangleright\theta}\oplus A_{\theta^*\triangleright\galgebra^*}$
(see Proposition \ref{Prop:D'sdoublebracket}), the Dorfmann bracket on $\galgebra\oplus\galgebra^*$ is not closed, nor on $\theta\oplus\theta^*$.
\end{rmk}

\section{Examples}

In this section, we give several examples of Lie bialgebroid crossed modules.

 \begin{ex}
Let $A$ and $B$ be vector bundles over $M$, $d: A{\lon}
 B$ a bundle map. Equipped with
   abelian Lie algebroids on both $A$ and $B$, and the trivial
 action of $B$ on $A$, then $\crossedmoduletriple{A}{d}{ B}$ is an obvious
 Lie algebroid crossed module.  By Theorem \ref{Thm:mathchedpairiff}, any Lie bialgebroid crossed module structure of the form  $\crossedmoduletriple{A}{d}{ B}$
is equivalent to assign a Lie algebroid crossed module structure to
$\crossedmoduletriple{B^*}{d^T}{ {\As}}$.

\end{ex}

\begin{ex}
We recall the action of a Lie algebra crossed module $\crossedmoduletriple{\theta}{\phi}{\galgebra}$ on a smooth manifold $M$ \cite{MR2996854}. Such an action is in fact a Lie algebra crossed module homomorphism $\varphi=(\varphi_0,0):\crossedmoduletriple{\theta}{\phi}{\galgebra}\longrightarrow\crossedmoduletriple{0}{}{\mathfrak{X}(M)}$. In other words, $\varphi_0: \galgebra\longrightarrow\mathfrak{X}(M)$ is a Lie algebra homomorphism such that $\varphi_0\circ \phi=0$. A Lie algebroid crossed module structure
 $\crossedmoduletriple{M\times \theta}{\phi}{M^\galgebra}$ is induced. Here ${M^\galgebra}$ is an action Lie algebroid $M^\galgebra$ (see Example \ref{Ex:actionLiealgebroids}) and the action of $M^\galgebra$ on $M\times \theta$ is induced by the action of $\galgebra$ on $\theta$.

Now suppose that $\crossedmoduletriple{\theta}{\phi}{\galgebra}$ and $\crossedmoduletriple{\galgebra^*}{\phiUprotate}{\theta^*}$ form a Lie bialgebra crossed module defined in the earlier work \cite{MR3035114}.  Assume further that $\psi_0:\theta^*\longrightarrow\mathfrak{X}(M)$ defines an action of the Lie algebra crossed module $\crossedmoduletriple{\galgebra^*}{\phiUprotate}{\theta^*}$ on $M$.

If $\varphi_0$ and $\psi_0$ satisfy the following condition:
$$[\varphi_0(x),\psi_0(\alpha)]=\psi_0(x\triangleright \alpha)-\varphi_0(\alpha\triangleright x),\qquad\forall x\in \galgebra,\ \alpha\in \theta^*,$$ then the map $\varphi_0\oplus\psi_0: \galgebra\bowtie \theta^*\longrightarrow\mathfrak{X}(M)$ defines an action,
 and we have an action Lie algebroid
 ${M^{\galgebra\bowtie \theta^*}}$. It is easy to see that $(M^\galgebra, M^{\theta^*})$ is a matched pair and $M^\galgebra\bowtie {M^{\theta^*}}\cong {M^{\galgebra\bowtie \theta^*}}$. Thus, by Theorem \ref{Thm:mathchedpairiff}, $\crossedmoduletriple{M\times \theta}{\phi}{M^\galgebra}$ and $\crossedmoduletriple{M\times \galgebra^*}{\phiUprotate}{M^{\theta^*}}$ form a Lie bialgebroid crossed module.

\end{ex}

 \begin{ex}\label{ex:Rmatrix}   In Example \ref{ex:classicalrmatrix} we have reviewed the standard $r$-matrix construction of Lie bialgebroids. Below, we give an analogous construction of Lie bialgebroid crossed modules.

Let $\crossedmoduletriple{\thetaalgebra}{\phi}{\galgebra}$ be a Lie algebroid crossed module. We call $\Rmatrix\in \Gamma(\wedge^2\thetaalgebra)$
 \textbf{a crossed  module  $r$-matrix}
   if $\ba{\Rmatrix}{\Rmatrix}\in
\Gamma(\wedge^3\thetaalgebra)$ is $\galgebra$-invariant, i.e.,
$$
 x \moduleaction\ba{\Rmatrix}{\Rmatrix} =0,\quad\forall~  x \in
\sections{\galgebra}.
$$

It can be verified that  such a crossed module  $r$-matrix $\Rmatrix$ is an $r$-matrix on $\thetaalgebra$. Moreover, it gives rise to a Lie
 algebroid crossed module structure  $\crossedmoduletriple{\galgebrastar}{\phiUprotate}{
\thetaalgebrastar}$. The Lie algebroid structure of $\thetaalgebrastar$ is just
given by $\ba{\tobefilledin}{\tobefilledin}_{\Rmatrix}$ as in Example \ref{ex:classicalrmatrix}. And the
action of $\thetaalgebrastar$ on $\galgebrastar$ is given as follows:
$$
\pairing{\alpha\moduleaction\xi}{x}=-\pairing{ {\alpha}\wedge{\phiUprotate
(\xi) }}{x\moduleaction\Rmatrix},
$$
where $\alpha\in\Gamma(\thetaalgebrastar)$, $\xi\in\Gamma(\galgebrastar)$,
$x\in\sections{\galgebra}$.

Define $\mathbf{r}'\in \Gamma(\galgebra\wedge\thetaalgebra)\oplus\Gamma(\wedge^2 \galgebra)$  by the rule:
$$
\mbox{if}\ \ \mathbf{r}=\sum_i a_i\wedge b_i,~\ ~(a_i,b_i\in\Gamma(\thetaalgebra)), \quad\mbox{then}\quad\mathbf{r}'=\sum_i \bigl(~\phi(a_i)\wedge b_i+a_i\wedge \phi(b_i)-\phi(a_i)\wedge\phi(b_i)~\bigr).
$$
Then, $\mathbf{r}+\mathbf{r}'$ is an $r$-matrix of the Lie algebroid $A_{\galgebra\triangleright \theta}$. Moreover, one can check that the Lie algebroid structure on $A_{\theta^*\triangleright \galgebra^*}$ comes from $\mathbf{r}+\mathbf{r}'$. Hence
 $(A_{\galgebra\triangleright \theta},A_{\theta^*\triangleright \galgebra^*})$ is a Lie bialgebroid and by definition,   the Lie algebroid crossed modules $\crossedmoduletriple{\thetaalgebra}{\phi}{\galgebra}$ and
$\crossedmoduletriple{\galgebrastar}{\phiUprotate}{
\thetaalgebrastar}$ form a Lie bialgebroid crossed module.
\end{ex}

\section{Techniques and Proofs of Main Theorems}

 Throughout this section, we  suppose that $x,y\in\sections{\galgebra}$, $\xi,\eta\in \Gamma(\galgebra^*)$, $u,v\in \Gamma(\theta)$ and $\alpha,\beta\in \Gamma(\theta^*).$

\begin{prop}\label{prop:fact}Given a Lie algebroid crossed module
$\crossedmoduletriple{\thetaalgebra}{\phi}{\galgebra} $, we have the following equalities:  
\begin{eqnarray}
\label{Eqt:another}
\phiUprotate(\ld{x}^\galgebra\xi)&=&x\moduleaction \phiUprotate(\xi),\\
\label{Eqt:another2}\phi(u)\moduleaction \alpha&=&\ld{u}^{\theta}{\alpha}.
\end{eqnarray}
\end{prop}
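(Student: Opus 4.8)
The plan is to prove both identities fiberwise, by pairing each side (a section of $\thetaalgebrastar$) with an arbitrary test section of $\thetaalgebra$ and reducing to the defining formulas for the two dual actions, the Cartan formula for the Lie algebroid Lie derivative on $1$-forms, and the two crossed-module axioms. First I would record the conventions I rely on: the representation $\moduleaction$ of $\galgebra$ on $\thetaalgebra$ dualizes to $\thetaalgebrastar$ via
\[
\pairing{x\moduleaction\alpha}{u}=\rho_\galgebra(x)\pairing{\alpha}{u}-\pairing{\alpha}{x\moduleaction u},\qquad x\in\sections{\galgebra},\ \alpha\in\Gamma(\thetaalgebrastar),\ u\in\sections{\thetaalgebra},
\]
the Lie derivative $\ld{x}^\galgebra$ acts on $\xi\in\Gamma(\galgebrastar)$ by $\pairing{\ld{x}^\galgebra\xi}{X}=\rho_\galgebra(x)\pairing{\xi}{X}-\pairing{\xi}{\ba{x}{X}}$ for $X\in\sections{\galgebra}$ (this follows from $\ld{x}=\ii{x}\dee+\dee\ii{x}$ and the differential formula of Section 2), and, since $\thetaalgebra$ is a Lie algebra bundle and hence has vanishing anchor, $\ld{u}^{\theta}$ reduces to the coadjoint operator $\pairing{\ld{u}^{\theta}\alpha}{v}=-\pairing{\alpha}{\ba{u}{v}}$.

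For \eqref{Eqt:another} I pair both sides with an arbitrary $u\in\sections{\thetaalgebra}$. On the left, writing $\phiUprotate=-\phi^*$ and transferring $\phi$ onto $u$ gives $-\pairing{\ld{x}^\galgebra\xi}{\phi(u)}$, which I expand by the Cartan formula and then simplify using crossed-module axiom (2), $\ba{x}{\phi(u)}=\phi(x\moduleaction u)$. On the right I unwind the dual action and again move $\phiUprotate$ onto the test vectors. Both sides reduce to $-\rho_\galgebra(x)\pairing{\xi}{\phi(u)}+\pairing{\xi}{\phi(x\moduleaction u)}$, which proves the identity.

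For \eqref{Eqt:another2} I pair both sides with an arbitrary $v\in\sections{\thetaalgebra}$. On the left I unwind the dual action with $\phi(u)$ playing the role of $x$; the anchor term $\rho_\galgebra(\phi(u))\pairing{\alpha}{v}$ vanishes because $\rho_\galgebra\circ\phi=0$ (the Remark following the definition of a Lie algebroid crossed module), and crossed-module axiom (1), $\phi(u)\moduleaction v=\ba{u}{v}$, turns the remaining term into $-\pairing{\alpha}{\ba{u}{v}}$. On the right, the Cartan formula for $\ld{u}^{\theta}$ with vanishing anchor gives precisely $-\pairing{\alpha}{\ba{u}{v}}$, so the two sides agree. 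Both verifications are purely formal; the only points requiring care are the bookkeeping of signs across the three dualizations ($\phiUprotate=-\phi^*$, the dual action, and the Cartan formula) and the essential---if elementary---use of $\rho_\galgebra\circ\phi=0$ to kill the anchor term in \eqref{Eqt:another2}.
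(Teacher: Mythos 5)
Your proposal is correct and follows essentially the same route as the paper's proof: both identities are verified by pairing with an arbitrary test section of $\thetaalgebra$, transferring $\phi$ across the pairing via $\phiUprotate=-\phi^*$, expanding the Lie derivative, and invoking the two crossed-module axioms together with $\rho_\galgebra\circ\phi=0$ and the Lie algebra bundle structure of $\thetaalgebra$. The only (immaterial) difference is stylistic: the paper writes each verification as a single chain of equalities from left side to right side, whereas you reduce both sides to a common expression.
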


\begin{proof}
For all $u\in \sections{\thetaalgebra}$, one has
\begin{eqnarray*}
\pairing{\phiUprotate(\ld{x}^\galgebra\xi)}{u}&=&-\pairing{\ld{x}^\galgebra\xi}{\phi(u)}
=-\rho_{\galgebra}(x)\pairing{\xi}{\phi(u)}+\pairing{\xi}{[x,\phi(u)]}\\&=&-\rho_{\galgebra}(x)\pairing{\xi}{\phi(u)}+\pairing{\xi}{\phi(x\moduleaction u)}\\ &=&\rho_{\galgebra}(x)\pairing{\phiUprotate(\xi)}{u}-\pairing{\phiUprotate(\xi)}{x\moduleaction u}\\&=&\pairing{x\moduleaction \phiUprotate (\xi)}{u}.
\end{eqnarray*}
This proves \eqref{Eqt:another}. For \eqref{Eqt:another2}, we observe that
\begin{eqnarray*}
\pairing{\phi(u)\moduleaction \alpha}{v}=-\pairing{\alpha}{\phi(u)\moduleaction v}
=-\pairing{\alpha}{[u,v]}=\pairing{\ld{u}^{\theta} \alpha}{v},
\end{eqnarray*}
where we have used the facts that $\rho_{\galgebra}\circ \phi=0$ and $\theta$ is a Lie algebra bundle.
\end{proof}

\begin{prop}\label{Prop:D'sdoublebracket}Suppose that $\crossedmoduletriple{\thetaalgebra}{\phi}{\galgebra} $ and $\crossedmoduletriple{\galgebrastar}{\phiUprotate}{\thetaalgebrastar}$ are Lie algebroid crossed modules in duality and let
$$E=A_{\galgebra\triangleright\theta}\oplus A_{\theta^*\triangleright\galgebra^*}= \galgebra\oplus\thetaalgebra\oplus\thetaalgebrastar\oplus\galgebrastar$$ be endowed with the Dorfmann bracket $\circ$  in Eqt. \eqref{Eqt:Dorfmanbracket}. Then,
\begin{enumerate}
\item[(1)]  Restricted to $\galgebra\oplus\galgebrastar$, we have
    \begin{equation*}
    x\circ \xi=\ld{x}^{\galgebra}\xi-\ld{\xi}^{\galgebra^*}x+x\vee\xi,\qquad\forall x\in \sections{\galgebra},\xi\in\Gamma(\galgebrastar).
    \end{equation*}
    Here $x\vee\xi \in \sections{\thetaalgebra}$ is defined by
\begin{equation}
\label{Eqt:Vee1}
    \pairing{x\vee\xi}{\alpha}{=\pairing{\xi}{\alpha\triangleright x}}=\rho_{\theta^*}
    (\alpha)\pairing{\xi}{x}-\pairing{x}{\alpha\moduleaction \xi} ,\quad\forall \alpha\in \Gamma(\thetaalgebrastar).
    \end{equation}
\item[(2)] Restricted to  $\thetaalgebra\oplus\thetaalgebrastar$, we have
    \begin{equation*}
    \alpha\circ u=\ld{\alpha}^{\theta^*}u - \ld{u}^{\theta}\alpha+\alpha\vee u,\qquad\forall u\in \sections{\thetaalgebra},\alpha\in\Gamma(\thetaalgebrastar).
    \end{equation*}
    Here $\alpha\vee u \in \Gamma(\galgebrastar)$ is defined by
\begin{equation}
\label{Eqt:Vee2}
    \pairing{\alpha \vee u}{x}{=\pairing{u}{x\triangleright \alpha}}=\rho_{\galgebra}(x)\pairing{\alpha}{u}-\pairing{\alpha}{x\moduleaction u} ,\quad\forall x\in \sections{\galgebra}.
\end{equation}
\item[(3)] Restricted to $\galgebra\oplus\thetaalgebrastar$, we have
    $$x\circ \alpha=x\moduleaction \alpha-\alpha \moduleaction x,\qquad \forall x\in \sections{\galgebra},\ \alpha\in \Gamma(\thetaalgebrastar).
    $$
    \item[(4)]   Restricted to  $\galgebrastar\oplus\thetaalgebra$, the Dorfmann bracket $\circ$ vanishes.
    \end{enumerate}
\end{prop}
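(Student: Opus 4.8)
The plan is to reduce each of the four cases to the defining formula \eqref{Eqt:Dorfmanbracket} for the Dorfmann bracket, and then to decompose the resulting $A$-level (resp.\ $A^*$-level) operations into their components along $\galgebra,\theta$ (resp.\ $\theta^*,\galgebra^*$) by testing against sections. Throughout I shall use the explicit description of $A_{\galgebra\triangleright\theta}$ and $A_{\theta^*\triangleright\galgebra^*}$ from Proposition~\ref{Prop:lie2algebra}: in $A=\galgebra\oplus\theta$ the two summands are Lie subalgebroids, the anchor is $\rho_\galgebra$ (so $\theta$ is anchored trivially) and the mixed bracket is $\ba{x}{u}=x\moduleaction u$; dually, in $A^*=\theta^*\oplus\galgebra^*$ the base algebroid is $\theta^*$, the isotropy bundle is $\galgebra^*$, and $\rho_{A^*}=\rho_{\theta^*}$.

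For item (1) I substitute $\xi_1=0$ and $X_2=0$ into \eqref{Eqt:Dorfmanbracket}, which collapses it to $x\circ\xi=\ld{x}\xi-\ii{\xi}\dees x$, where $\ld{x}$ is the $A$-Lie derivative and $\dees$ the $A^*$-differential. Pairing $\ld{x}\xi$ against sections of $A$, the pairing with $\theta$ vanishes because $\xi\in\galgebra^*$, so $\ld{x}\xi=\ld{x}^{\galgebra}\xi\in\galgebra^*$. For the second term I expand $\dees x$ through the Chevalley--Eilenberg formula of the Lie algebroid $A^*$, namely $(\dees x)(\zeta_1,\zeta_2)=\rho_{A^*}(\zeta_1)\pairing{x}{\zeta_2}-\rho_{A^*}(\zeta_2)\pairing{x}{\zeta_1}-\pairing{x}{\bas{\zeta_1}{\zeta_2}}$, contract with $\xi$, and split the test covector into its $\galgebra^*$- and $\theta^*$-parts. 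The $\galgebra^*$-test returns exactly $\ld{\xi}^{\galgebra^*}x$ (using that $\galgebra^*$ carries the trivial anchor), while the $\theta^*$-test extracts the $\theta$-component $-(x\vee\xi)$ via the defining identity \eqref{Eqt:Vee1}; hence $\ii{\xi}\dees x=\ld{\xi}^{\galgebra^*}x-x\vee\xi$ and the claimed formula follows. Item (2) is the mirror image: setting $X_1=0$ and $\xi_2=0$ gives $\alpha\circ u=\ld{\alpha}u-\ii{u}\dee\alpha$, and the identical component analysis---now using that $\theta^*$ is the base algebroid of $A^*$ and $\theta$ the isotropy bundle of $A$, together with \eqref{Eqt:Vee2}---yields $\ld{\alpha}u=\ld{\alpha}^{\theta^*}u$ and $\ii{u}\dee\alpha=\ld{u}^{\theta}\alpha-\alpha\vee u$.

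For item (3) the same reduction gives $x\circ\alpha=\ld{x}\alpha-\ii{\alpha}\dees x$. Testing $\ld{x}\alpha$ against $\galgebra$ and $\theta$, only the pairing with $\theta$ survives and returns the dual action $x\moduleaction\alpha\in\theta^*$; expanding $\ii{\alpha}\dees x$ as above, the $\theta$-direction vanishes (since $\theta^*$ pairs trivially with $x\in\galgebra$) and the $\galgebra^*$-direction returns the dual action $\alpha\moduleaction x\in\galgebra$, so that $\ii{\alpha}\dees x=\alpha\moduleaction x$ and $x\circ\alpha=x\moduleaction\alpha-\alpha\moduleaction x$. For item (4) I compute $\xi\circ u=\ld{\xi}u-\ii{u}\dee\xi$ and show each term vanishes identically: every pairing that arises couples a section of $\galgebra^*$ (or a bracket landing in $\galgebra^*$) against a section of $\theta$ (or one landing in $\theta$), which are orthogonal, while $\rho_{A^*}(\xi)=0$ and $\rho_A(u)=0$ annihilate the anchor terms. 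Finally, since $\pairing{u}{\xi}=0$ one has $\DD\pairing{u}{\xi}=0$, so by the symmetry relation \eqref{CA:4} the reversed bracket satisfies $u\circ\xi=-\xi\circ u=0$; thus $\circ$ vanishes identically on $\galgebra^*\oplus\theta$.

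The computations are all routine once the conventions are pinned down; the single delicate point---and the main technical obstacle---is the correct expansion and contraction of the $A^*$-differential $\dees$ applied to a section of $A$ regarded as a degree-one cochain, because this is where the anchor $\rho_{\theta^*}$ and the mixed brackets $\bas{\alpha}{\xi}=\alpha\moduleaction\xi$ enter, and precisely these terms must be matched against the definitions \eqref{Eqt:Vee1} and \eqref{Eqt:Vee2} of the $\vee$-operations and against the dual actions. Keeping the signs and the placement of the anchor terms in $(\dees x)(\zeta_1,\zeta_2)$ straight is the crux of the whole proposition.
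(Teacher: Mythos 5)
Your proposal is correct and follows essentially the same route as the paper's proof: specialize \eqref{Eqt:Dorfmanbracket} to the two surviving terms (a Lie derivative plus a contraction with a differential), expand the differential via the Chevalley--Eilenberg formula, and identify the $\galgebra$-, $\theta$-, $\theta^*$-, $\galgebra^*$-components by pairing, matching the cross terms against \eqref{Eqt:Vee1}, \eqref{Eqt:Vee2} and the dual actions. The only cosmetic differences are that the paper pairs against one general element $y+u+\alpha+\eta$ instead of testing summand by summand, and that your explicit appeal to the symmetric-part identity for $u\circ\xi$ (valid for the bracket \eqref{Eqt:Dorfmanbracket} of any dual pair, no bialgebroid hypothesis needed) is left implicit in the paper.
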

\begin{proof} For simplicity, we write $A=A_{\galgebra\triangleright\theta}$ and ${\As}=A_{\theta^*\triangleright\galgebra^*}$. We first show Claim (1).
In fact,
\begin{eqnarray*}
\pairing{x\circ\xi}{y+u+\alpha+\eta}&=&\pairing{\ld{x}^A \xi-i_\xi d_{{\As}}x }{y+u+\alpha+\eta}\\&=&\rho_{\galgebra}(x)\xi(y)-\pairing{\xi}{[x,y+u]_A}-d_{{\As}}x(\xi,\alpha+\eta)\\
&=&\pairing{\ld{x}^{\galgebra}\xi}{y}+\rho_{\theta^*}(\alpha)\xi(x)+\pairing{x}{[\xi,\alpha+\eta]_{{\As}}}\\ &=&\pairing{\ld{x}^{\galgebra}\xi}{y}-\pairing{\ld{\xi}^{\galgebra^*}x}{\eta}+\rho_{\theta^*}(\alpha)\xi(x)-\pairing{x}{\alpha\triangleright\xi}.
\end{eqnarray*}
Claim (2) can be derived analogously.

For Claim (3), one needs to observe that\begin{eqnarray*}
\pairing{x\circ\alpha}{y+u+\beta+\eta}&=&\pairing{\ld{x}^A\alpha-i_\alpha d_{{\As}}x} {y+u+\beta+\eta}\\&=&\rho_{\galgebra}(x)\alpha(u)-\pairing{\alpha}{x\triangleright u}-\rho_{\theta^*}(\alpha)\eta(x)+\pairing{x}{\alpha\triangleright \eta}\\ &=&\pairing{x\triangleright \alpha}{u}-
\pairing{\alpha\triangleright x}{\eta}.
\end{eqnarray*}
Finally to show Claim (4), it suffices to prove that $\galgebrastar$ and $\thetaalgebra$ are commuting with respect to $\circ$. In fact, 
\begin{eqnarray*}
\pairing{\xi\circ u}{x+v+\beta+\eta}&=&\pairing{\ld{\xi}^{{\As}}u-i_ud_A \xi}{x+v+\beta+\eta}
\\&=&-\pairing{u}{[\xi,\beta+\eta]_{{\As}}}-d_A \xi(u,x+v)\\&=&0+0=0. \end{eqnarray*}
\end{proof}

\begin{lem}
Maintain the assumptions in Proposition \ref{Prop:D'sdoublebracket} and suppose further that $(\galgebra,\theta^*)$ is a matched pair. Then we have
\begin{eqnarray}
\label{Eqt:anchor}\rho_{\galgebra}(\ld{\xi}^{\galgebra^*}x)&=&0,\\
\label{Eqt:temp1}
\ld{x}^{\galgebra}[\xi,\eta]&=&[\ld{x}^\galgebra\xi,\eta]+[\xi,\ld{x}^\galgebra\eta]-\ld{\ld{\xi}^{\galgebra^*}x}^{\galgebra}\eta
+\ld{\ld{\eta}^{\galgebra^*}x}^{\galgebra}\xi-d_{\galgebra}\pairing{\ld{\eta}^{\galgebra^*}x}{\xi},\\
\label{555} \ld{\xi}^{\galgebra^*}[x,y]&=&[\ld{\xi}^{\galgebra^*} x,y]+[x,\ld{\xi}^{\galgebra^*} y]+\ld{\ld{y}^{\galgebra}\xi}^{\galgebra^*}x-\ld{\ld{x}^{\galgebra}\xi}^{\galgebra^*}y,
\end{eqnarray}
for all $\xi,\eta \in \Gamma(\galgebrastar), x,y\in \sections{\galgebra}$.
\end{lem}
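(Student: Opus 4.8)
The plan is to reduce all three identities to the matched-pair axioms of Definition~\ref{Defn:mathchedpair} by means of a single computational observation. Since $\galgebrastar$ is a Lie algebra bundle in the crossed module $\crossedmoduletriple{\galgebrastar}{\phiUprotate}{\thetaalgebrastar}$, its anchor vanishes, so for $\xi\in\Gamma(\galgebrastar)$ and $x\in\sections{\galgebra}$ the section $\ld{\xi}^{\galgebrastar}x\in\sections{\galgebra}$ is given by the Lie-derivative formula $\pairing{\ld{\xi}^{\galgebrastar}x}{\eta}=\rho_{\galgebrastar}(\xi)\pairing{x}{\eta}-\pairing{x}{[\xi,\eta]}=-\pairing{x}{[\xi,\eta]}$. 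Combining this with the first crossed-module axiom for $\crossedmoduletriple{\galgebrastar}{\phiUprotate}{\thetaalgebrastar}$, namely $[\xi,\eta]=\phiUprotate(\xi)\moduleaction\eta$, the defining relation of the dual $\thetaalgebrastar$-action on $\galgebra$, and the vanishing $\rho_{\thetaalgebrastar}\circ\phiUprotate=0$, I would first establish
\begin{equation*}
\ld{\xi}^{\galgebrastar}x=\phiUprotate(\xi)\moduleaction x ,
\end{equation*}
i.e.\ $\ld{\xi}^{\galgebrastar}x$ is just the matched-pair action of $\phiUprotate(\xi)\in\sections{\thetaalgebrastar}$ on $x$. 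Granting this, \eqref{Eqt:anchor} is immediate: applying $\rho_{\galgebra}$ and using the first identity of Definition~\ref{Defn:mathchedpair} with $X=x$ and $Y=\phiUprotate(\xi)$, the bracket $\ba{\rho_{\galgebra}(x)}{\rho_{\thetaalgebrastar}(\phiUprotate(\xi))}$ vanishes because $\rho_{\thetaalgebrastar}\circ\phiUprotate=0$, while the remaining term $\rho_{\thetaalgebrastar}(x\moduleaction\phiUprotate(\xi))$ equals $\rho_{\thetaalgebrastar}(\phiUprotate(\ld{x}^{\galgebra}\xi))$ by \eqref{Eqt:another} and hence also vanishes; this forces $\rho_{\galgebra}(\phiUprotate(\xi)\moduleaction x)=0$.

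For \eqref{555} I would rewrite each term through the identification above together with \eqref{Eqt:another}. Indeed $\ld{\xi}^{\galgebrastar}[x,y]=\phiUprotate(\xi)\moduleaction[x,y]$, $\ld{\xi}^{\galgebrastar}x=\phiUprotate(\xi)\moduleaction x$, and $\ld{\ld{y}^{\galgebra}\xi}^{\galgebrastar}x=\phiUprotate(\ld{y}^{\galgebra}\xi)\moduleaction x=(y\moduleaction\phiUprotate(\xi))\moduleaction x$, with the symmetric statement for the last term. Writing $\alpha=\phiUprotate(\xi)\in\sections{\thetaalgebrastar}$, the identity \eqref{555} becomes
\begin{equation*}
\alpha\moduleaction[x,y]=[\alpha\moduleaction x,y]+[x,\alpha\moduleaction y]+(y\moduleaction\alpha)\moduleaction x-(x\moduleaction\alpha)\moduleaction y ,
\end{equation*}
which is precisely the third identity of Definition~\ref{Defn:mathchedpair} specialized to $Y=\alpha$, $X_1=x$, $X_2=y$.

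The real work is \eqref{Eqt:temp1}, since it couples the coadjoint action $\ld{x}^{\galgebra}$ on $\galgebrastar$ --- which carries a genuine anchor contribution because $\rho_{\galgebra}\neq 0$ --- with the fibrewise bracket of $\galgebrastar$, and it is exactly this anchor contribution that produces the differential $d_{\galgebra}\pairing{\ld{\eta}^{\galgebrastar}x}{\xi}$ having no analogue in \eqref{555}. I would prove \eqref{Eqt:temp1} by pairing both sides with an arbitrary $z\in\sections{\galgebra}$ and expanding via $\pairing{\ld{x}^{\galgebra}\zeta}{z}=\rho_{\galgebra}(x)\pairing{\zeta}{z}-\pairing{\zeta}{[x,z]}$, the identification above, \eqref{Eqt:another}, and the defining relation of the dual action. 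The two terms proportional to $\rho_{\galgebra}(x)$ match across the two sides and cancel; in the two double-Lie-derivative terms the anchors $\rho_{\galgebra}(\ld{\xi}^{\galgebrastar}x)$ and $\rho_{\galgebra}(\ld{\eta}^{\galgebrastar}x)$ drop out by the already-established \eqref{Eqt:anchor}; and after inserting the third identity of Definition~\ref{Defn:mathchedpair} (again with $Y=\phiUprotate(\xi)$, $X_1=x$, $X_2=z$) the surviving expression collapses, using the skew-symmetry of the $\galgebrastar$-bracket and its vanishing anchor, to the tautology $-\pairing{x}{[\ld{z}^{\galgebra}\xi,\eta]}=-\pairing{x}{[\ld{z}^{\galgebra}\xi,\eta]}$. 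The step I expect to be most error-prone is the sign bookkeeping that converts the anchor part of the coadjoint action into the $d_{\galgebra}$-term, since this is exactly where the asymmetry between $\galgebra$ (nonzero anchor) and $\galgebrastar$ (zero anchor) enters.
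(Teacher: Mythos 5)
Your proposal is correct and takes essentially the same approach as the paper: both rest on the identification $\ld{\xi}^{\galgebrastar}x=\phiUprotate(\xi)\moduleaction x$ obtained from Proposition~\ref{prop:fact} applied to the dual crossed module, derive \eqref{Eqt:anchor} from the first matched-pair axiom together with $\rho_{\thetaalgebrastar}\circ\phiUprotate=0$ and \eqref{Eqt:another}, and reduce \eqref{555} and \eqref{Eqt:temp1} to the third matched-pair axiom by pairing against sections of $\galgebra$. The only difference is cosmetic: for \eqref{Eqt:temp1} you expand $\phiUprotate(\xi)\moduleaction[x,z]$ where the paper expands $\phiUprotate(\eta)\moduleaction[x,y]$ --- a mirror image of the same computation.
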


\begin{proof}
Applying Proposition  \ref{prop:fact} to the Lie algebroid crossed module $\crossedmoduletriple{\galgebrastar}{\phiUprotate}{\thetaalgebrastar}$, we have
\begin{eqnarray}\label{Eqt:Lie deri}
\ld{\xi}^{\galgebra^*}x=\phiUprotate(\xi)\triangleright x.
\end{eqnarray}
 Thus, by Eqt. \eqref{Eqt:another} and the facts that $(\galgebra,\theta^*)$ is a matched pair and $\rho_{\theta^*}\circ \phiUprotate=0$, we have
\begin{eqnarray*}
\rho_{\galgebra}(\ld{\xi}^{\galgebra^*}x)=\rho_{\galgebra}(\phiUprotate(\xi)\triangleright x)=
\rho_{\theta^*}(x\triangleright\phiUprotate(\xi))=\rho_{\theta^*}\phiUprotate(\ld{x}^{\galgebra}\xi)=0.
\end{eqnarray*}
To prove Eqt. (\ref{Eqt:temp1}), we calculate
\begin{eqnarray*}
\pairing{\ld{x}[\xi,\eta]}{y}&=&\rho_{\galgebra}(x)\pairing{[\xi,\eta]}{y}-\pairing{[\xi,\eta]}{[x,y]}\\
&=&\rho_\galgebra(x)\pairing{[\xi,\eta]}{y}+\pairing{\phiUprotate(\eta)\moduleaction \xi}{[x,y]}\\&=&\rho_\galgebra(x)\pairing{[\xi,\eta]}{y}-\pairing{\xi}{\phiUprotate(\eta) \moduleaction [x,y]}\\
&=&\rho_{\galgebra}(x)\pairing{[\xi,\eta]}{y}\\ &&-\pairing{\xi}{[\phiUprotate(\eta)\moduleaction x,y]+[x,\phiUprotate(\eta)\moduleaction y]+(y\moduleaction \phiUprotate(\eta))\moduleaction x-(x\moduleaction \phiUprotate(\eta))\moduleaction y}.
\end{eqnarray*}
The last step is due to the fact that $(\galgebra,\thetaalgebrastar)$ is a matched pair.
Below we evaluate each of the above terms:
\begin{eqnarray*}
-\pairing{\xi}{[\phiUprotate(\eta)\moduleaction x,y]}= \pairing{\xi}{[y,\ld{\eta} x]}=
\pairing{\ld{\ld{\eta} x} \xi}{y} \qquad (\mbox{by Eqt. \eqref{Eqt:anchor},\eqref{Eqt:Lie deri}}),
\end{eqnarray*}
\begin{eqnarray*}
-\pairing{\xi}{[x, \phiUprotate(\eta) \moduleaction y]}&=&\pairing{\ld{x} \xi}{\phiUprotate(\eta)\moduleaction y}-\rho_{\galgebra}(x)\pairing{\xi}{\phiUprotate(\eta)\moduleaction y}\\&=&-\pairing{\phiUprotate(\eta)\moduleaction (\ld{x} \xi)}{y}+\rho_{\galgebra}(x)\pairing{\phiUprotate(\eta)\moduleaction \xi}{y}\\ &=&\pairing{[\ld{x} \xi,\eta]}{y}-
\rho_{\galgebra}(x)\pairing{[\xi,\eta]}{y},
\end{eqnarray*}
\begin{eqnarray*}
-\pairing{\xi}{(y\moduleaction \phiUprotate (\eta))\moduleaction x}&=&\pairing{\phiUprotate (\ld{y} \eta)\moduleaction \xi}{x}\qquad (\mbox{by Eqt. \eqref{Eqt:another}},\ \rho_{\theta^*}\circ \phiUprotate=0 )\\&=&\pairing{[\ld{y} \eta,\xi]}{x}=\pairing{\ld{y} \eta}{\ld{\xi} x}\\&=&\rho_{\galgebra}(y)\pairing{\eta}{\ld{\xi} x}-\pairing{\eta}{[y,\ld{\xi} x]}\\ &=&-\pairing{d_{\galgebra}\pairing{\ld{\eta} x}{\xi}}{y}-\pairing{\ld{\ld{\xi} x}\eta}{y} \qquad (\mbox{by Eqt. \eqref{Eqt:anchor}}),
\end{eqnarray*}
\begin{eqnarray*}
\pairing{\xi}{(x\moduleaction \phiUprotate (\eta))\moduleaction y}&=&\pairing{\xi}{\phiUprotate (\ld{x} \eta)\moduleaction y}\qquad (\mbox{by Eqt. \eqref{Eqt:another}})\\&=&-\pairing{\phiUprotate (\ld{x} \eta)\moduleaction \xi}{y}=\pairing{[\xi,\ld{x}\eta]}{y}.
\end{eqnarray*}
Combining these equalities, one proves Eqt. (\ref{Eqt:temp1}).

To prove Eqt. (\ref{555}), we again note that $(\galgebra,\theta^*)$ is a matched pair, and
\begin{eqnarray*}
 \pairing{\ld{\xi}[x,y]}{\eta}&=&-\pairing{[x,y]}{[\xi,\eta]}=\pairing{\eta}{\phiUprotate(\xi)\triangleright[x,y]}\\ &=&
 \pairing{\eta}{[\phiUprotate(\xi)\triangleright x,y]+[x,\phiUprotate(\xi)\triangleright y]+(y\triangleright \phiUprotate(\xi))\triangleright x-(x\triangleright \phiUprotate(\xi))\triangleright y}\\ &=&
 \pairing{\eta}{[\ld{\xi} x,y]+[x,\ld{\xi} y]+\ld{\ld{y} \xi}x-\ld{\ld{x} \xi}y},
\end{eqnarray*}
where we have used Eqt. (\ref{Eqt:Lie deri}) and the fact that
$$\pairing{\eta}{(y\triangleright \phiUprotate(\xi))\triangleright x}=\pairing{\eta}{(\phiUprotate(\ld{y} \xi)\triangleright x}=-\pairing{[\ld{y} \xi,\eta]}{x}=\pairing{\eta}{\ld{\ld{y} \xi}x}.$$
We thus get Eqt. (\ref{555}).

\end{proof}
\begin{proof}[Proof of Theorem \ref{Thm:mathchedpairiff}]
``$\Longrightarrow$'':\ \ If $\crossedmoduletriple{\thetaalgebra}{\phi}{\galgebra} $ and $\crossedmoduletriple{\galgebrastar}{\phiUprotate}{\thetaalgebrastar}$ form a Lie bialgebroid crossed module, then $A=A_{\galgebra\triangleright \theta}$ and $\As=A_{\theta^*\triangleright \galgebra^*}$ constitute a Lie bialgebroid. Thus, $$E=A \oplus \As= \galgebra\oplus\thetaalgebra\oplus\thetaalgebrastar\oplus\galgebrastar$$ is endowed with a Courant algebroid structure. By Proposition \ref{Prop:D'sdoublebracket}, $\galgebra\oplus\thetaalgebrastar$ is a Dirac structure, as well as a Lie algebroid with $\galgebra$ and $\theta^*$ as Lie sub-algebroids.
Hence by Lemma \ref{lem:matchedpair}, the pair $(\galgebra,\thetaalgebrastar)$ must be a matched pair.

``$\Longleftarrow$'':\ \ Assume that $(\galgebra,\thetaalgebrastar)$ is a matched pair. For $E=A\oplus {\As}$, we first check Conditions \eqref{CA:2} -- \eqref{CA:6} in Definition \ref{Defn:CA}.

    For any two Lie algebroids $A$ and $A^*$, one may directly verify that the Dorfman bracket (\ref{Eqt:Dorfmanbracket})  satisfies \eqref{CA:3},\eqref{CA:4} and \eqref{CA:6}. Since the anchor maps vanish on  $\theta$ and $\galgebra^*$, we have
    $$\DD f=d_* f+df \in \Gamma(\theta)\oplus\Gamma(\galgebra^*),$$  $\rho\circ \DD=0$, and to verify \eqref{CA:2}, we only need to prove the following equality:
\begin{equation}\label{Eqt:sdfdfsdf}\rho((\alpha+\xi)\circ (x+u))=[\rho_{\theta^*}(\alpha),\rho_{\galgebra}(x)].\end{equation}

In fact, by Proposition \ref{Prop:D'sdoublebracket} and (\ref{Eqt:anchor}), the LHS of \eqref{Eqt:sdfdfsdf} equals $-\rho_{\theta^*}(x\triangleright \alpha)+\rho_{\galgebra}(\alpha\triangleright x)$. Thus the two sides of \eqref{Eqt:sdfdfsdf} match because $(\galgebra,\theta^*)$ is a matched pair.

To show \eqref{CA:5}, we check that, for any $x\in \sections{\galgebra}$:
 \begin{equation*}
 (d_* f+df)\circ x=-x\triangleright (d_* f)-\ld{x}^\galgebra (df)+\ld{df}^{\galgebra^*} x-x\vee (df)+d_*(\rho_{\galgebra}(x)f)+d(\rho_{\galgebra}(x)f).
 \end{equation*}
Among these terms,  it is easy to see that $-\ld{x}^\galgebra (df)+d(\rho_{\galgebra}(x)f)=0$.
Also, by (\ref{Eqt:anchor}) and (\ref{Eqt:another2}), we have
 $$\pairing{\ld{df}^{\galgebra^*} x}{\xi}=\pairing{x}{[df,\xi]}=-\pairing{x}{\phiUprotate(\xi)\triangleright df}=\pairing{\phiUprotate(\xi)\triangleright x}{df}=\rho(\ld{\xi}^{\galgebra^*} x)f=0.$$

Moreover, we find
 $$\pairing{-x\triangleright (d_* f)-x\vee (df)+d_*(\rho_{\galgebra}(x)f)}{\alpha}=\big(\rho_{\theta^*}(x\triangleright \alpha)-\rho_\galgebra(\alpha\triangleright x)-[\rho_\galgebra(x),\rho_{\theta^*}(\alpha)]\big)f=0.$$
 Thus we have proved $$(d_* f+df)\circ x=0.$$

 For $u\in \sections{\thetaalgebra}$, we have {
 $$\pairing{[d_* f,u]}{\alpha}=-\pairing{\phi(u)\triangleright d_* f}{\alpha}=\pairing{\phi(u)\triangleright \alpha}{d_* f}=\rho_{\theta^*}(\ld{u}^{\theta} \alpha)f=0,$$
 where we have used (\ref{Eqt:anchor}) and (\ref{Eqt:another2}). } Hence we obtain that $[d_* f,u]=0$, $\forall$ $u\in\sections{\thetaalgebra}$.   {Since $\galgebra^*$ and $\theta$ commute, we have $df\circ u=0$.}
 Consequently, we get
 $$(d_* f+df)\circ u=[d_* f,u]+df\circ u=0.$$

 Similarly, we can prove that $(d_* f+df)\circ (\alpha+\xi)=0$, for all  $\alpha\in \sections{{\theta^*}}$, $\xi\in\sections{{\galgebra^*}}$. This completes the proof of \eqref{CA:5}.

 So, to prove that $E$ is a Courant algebroid, it remains to show Condition \eqref{CA:1}, namely the Leibniz identity, for which we examine various cases.

\noindent$\bullet$
For three elements in $\galgebra\oplus\thetaalgebrastar$, the assumption that $(\galgebra,\thetaalgebrastar)$ is a matched pair already implies  Condition \eqref{CA:1} involving these elements.

\noindent$\bullet$
For  elements $x\in \sections{\galgebra},\xi,\eta\in \Gamma(\galgebrastar)$, by (\ref{Eqt:temp1}) and $\ld{[\xi,\eta]}=[\ld{\xi},\ld{\eta}]$, we have
\begin{eqnarray*}
&&x\circ (\xi\circ \eta)-(x\circ \xi)\circ \eta-\xi\circ (x\circ \eta)\\ &=&
\ld{x}[\xi,\eta]-\ld{[\xi,\eta]} x+x\vee [\xi,\eta]-\big([\ld{x}\xi,\eta]-\ld{\ld{\xi} x} \eta+\ld{\eta} \ld{\xi} x-(\ld{\xi} x) \vee \eta\big)\\ &&-\big([\xi,\ld{x}\eta]+\ld{\ld{\eta} x} \xi-\ld{\xi} \ld{\eta} x+(\ld{\eta} x) \vee \xi-(d_\galgebra+d_{\theta^*})\pairing{\ld{\eta} x}{\xi}\big)\\ &=&x\vee [\xi,\eta]+(\ld{\xi} x) \vee \eta-(\ld{\eta} x) \vee \xi+d_{\theta^*}\pairing{\ld{\eta} x}{\xi}.
\end{eqnarray*}
The last line vanishes because
\begin{eqnarray*}
&&\pairing{x\vee [\xi,\eta]+(\ld{\xi} x) \vee \eta-(\ld{\eta} x) \vee \xi}{\alpha}+\rho_{\theta^*}(\alpha)\pairing{\ld{\eta} x}{\xi}
\\ &=& {
\rho_{\theta^*}(\alpha)\pairing{x}{[\xi,\eta]}-\pairing{x}{\alpha\triangleright [\xi,\eta]}+\rho_{\theta^*}(\alpha)\pairing{\ld\xi x}{\eta}-\pairing{\ld\xi x}{\alpha\triangleright\eta}
}
\\ &&-{\rho_{\theta^*}(\alpha)\pairing{\ld\eta x}{\xi}+\pairing{\ld\eta x}{\alpha\triangleright\xi}-\rho_{\theta^*}(\alpha)\pairing{x}{[\eta,\xi]}
}
\\&=&\pairing{x}{-\alpha\moduleaction [\xi,\eta]+
[\alpha\moduleaction\xi,\eta]+[\xi,\alpha\moduleaction \eta]}=0,\qquad \forall \alpha\in\Gamma(\thetaalgebrastar).
\end{eqnarray*}
So we obtain
\begin{equation}
\label{666}x\circ (\xi\circ \eta)-(x\circ \xi)\circ \eta-\xi\circ (x\circ \eta)=0.
\end{equation}

\noindent$\bullet$
For elements $u\in \sections{\thetaalgebra},x\in\sections{\galgebra},\xi\in \Gamma(\galgebrastar)$, we have
\begin{eqnarray}\label{111}
u\circ(x\circ \xi)-(u\circ x)\circ \xi-x\circ(u\circ \xi)&=&
(\ld{\xi}x)\triangleright u+[u,x\vee \xi].
\end{eqnarray}
We need the following identity $$\phi(x\vee \xi)=\ld{\xi} x.$$ In fact, for any $\eta\in \Gamma(\galgebra^*)$, we have
 {(note that $\rho\circ \phiUprotate=0$)}
\begin{eqnarray*}
\pairing{\phi(x\vee \xi)}{\eta}&=&-\pairing{(x\vee \xi)}{\phiUprotate(\eta)}=\pairing{x}{\phiUprotate(\eta)\triangleright\xi}\\ &=&\pairing{x}{[\eta,\xi]}=\pairing{\ld{\xi} x}{\eta}.
\end{eqnarray*}

Then,  by the fact that $[u,x\vee \xi]=-\phi(x\vee \xi)\triangleright u$, the RHS of \eqref{111} is indeed zero.
So we have proved
\begin{eqnarray}
\label{777}u\circ(x\circ \xi)-(u\circ x)\circ \xi-x\circ(u\circ \xi)=0.
\end{eqnarray}
In a similar manner, one can show that
\begin{eqnarray}
\label{333} (\ld{u} \alpha)\moduleaction\xi+[\xi,\alpha\vee u]=0,\qquad \forall u\in \sections{\thetaalgebra},\alpha\in \Gamma(\thetaalgebrastar),\xi\in\Gamma(\galgebrastar).
\end{eqnarray}

\noindent$\bullet$
For elements $u\in \sections{\thetaalgebra},x\in\sections{\galgebra},\alpha\in \Gamma(\thetaalgebrastar)$, we have
\begin{eqnarray*}
&&x\circ(\alpha\circ u)-(x\circ \alpha)\circ u-\alpha\circ (x\circ u)\\&=&x\circ (\ld{\alpha} u-\ld{u} \alpha+\alpha\vee u)-(x\moduleaction \alpha-\alpha\moduleaction x)\circ u-\big(\ld{\alpha} (x\triangleright u)-\ld{x\triangleright u}\alpha+\alpha\vee (x\triangleright u)\big).
\end{eqnarray*}
According to Proposition \ref{Prop:D'sdoublebracket}, we can show that the $\sections{\galgebra}$-component of $x\circ(\alpha\circ u)-(x\circ \alpha)\circ u-\alpha\circ (x\circ u)$ is zero. In fact, for any $\xi\in\Gamma(\galgebrastar)$,
\begin{eqnarray*}
&&\pairing{\mathrm{Pr}_{\sections{\galgebra}}\bigl(x\circ(\alpha\circ u)-(x\circ \alpha)\circ u-\alpha\circ (x\circ u) \bigl)}{\xi}\\&=&\pairing{(\ld{u} \alpha)\moduleaction x-\ld{\alpha\vee u} x}{\xi}\\&=&\pairing{x}{-(\ld{u} \alpha)\moduleaction\xi+[\alpha\vee u,\xi]}=0,
\end{eqnarray*}
where we have used \eqref{333}.

To evaluate the $\sections{\thetaalgebra}$-component, we take  $\beta\in\Gamma(\thetaalgebrastar)$, and compute
\begin{eqnarray*}
&&\pairing{\mathrm{Pr}_{\sections{\thetaalgebra}}\bigl(x\circ(\alpha\circ u)-(x\circ \alpha)\circ u-\alpha\circ (x\circ u) \bigl)}{\beta}\\&=&\pairing{x\moduleaction (\ld{\alpha} u)+x\vee (\alpha\vee u)-\ld{x\moduleaction \alpha} u+(\alpha\moduleaction x)\moduleaction u-\ld{\alpha}(x\moduleaction u)}{\beta}\\&=&\rho_{\galgebra}(x)\rho_{\theta^*}(\alpha)\beta(u)-\rho_{\galgebra}(x)\pairing{u}{[\alpha,\beta]}-
\rho_{\theta^*}(\alpha)\pairing{u}{x\triangleright \beta}+\pairing{u}{[\alpha,x\triangleright \beta]}\\ &&
+\pairing{\alpha\vee u}{\beta\triangleright x}-\rho_{\theta^*}(x\triangleright \alpha)\beta(u)+\pairing{u}{[x\triangleright \alpha,\beta]}+\rho_{\galgebra}(\alpha\triangleright x)\beta(u)-\pairing{u}{(\alpha\triangleright x)\triangleright \beta)}\\ &&-\rho_{\theta^*}(\alpha)\rho_{\galgebra}(x)\beta(u)+
\rho_{\theta^*}(\alpha)\pairing{u}{x\triangleright \beta}+\rho_{\galgebra}(x)\pairing{u}{[\alpha,\beta]}-\pairing{u}{x\triangleright [\alpha,\beta]}\\ &=&0.
\end{eqnarray*}
Here we have used the facts that
$$\pairing{\alpha\vee u}{\beta\triangleright x}=\pairing{(\beta\triangleright x)\triangleright \alpha}{u},$$
and $(\galgebra,\theta^*)$ is a matched pair.

For the $\Gamma(\thetaalgebrastar)$-component, we have, for any $v\in \sections{\thetaalgebra}$,
\begin{eqnarray*}
&&\pairing{\mathrm{Pr}_{\Gamma(\thetaalgebrastar)}(x\circ(\alpha\circ u)-(x\circ \alpha)\circ u-\alpha\circ (x\circ u))}
{v}\\&=&\pairing{-x\moduleaction(\ld{u}\alpha)+\ld{u}(x\moduleaction \alpha)+\ld{x\moduleaction u} \alpha}{v}\\&=&-\rho_{\galgebra}(x)\pairing{\ld{u} \alpha}{v}+\pairing{\ld{u} \alpha}{x\triangleright v}-\pairing{x\moduleaction \alpha}{[u,v]}-\pairing{\alpha}{[x\moduleaction u,v]}\\&=&\pairing{\alpha}{x\moduleaction [u,v]-[u,x\moduleaction v]-[x\moduleaction u,v]}=0.
\end{eqnarray*}

For the $\Gamma(\galgebrastar)$-component, we have, for any $y\in \sections{\galgebra}$,
\begin{eqnarray*}
&&\pairing{\mathrm{Pr}_{\Gamma(\galgebrastar)}(x\circ(\alpha\circ u)-(x\circ \alpha)\circ u-\alpha\circ (x\circ u))}
{y}\\&=&\pairing{\ld{x} (\alpha\vee u)-(x\moduleaction \alpha)\vee u-\alpha\vee (x\moduleaction u)}{y}\\&=&\rho_{\galgebra}(x)\rho_{\galgebra}(y)\alpha(u)-\rho_{\galgebra}(x)\pairing{\alpha}{y\triangleright u}-\pairing{\alpha\vee u}{[x,y]}\\ &&-\rho_{\galgebra}(y)\pairing{x\moduleaction \alpha}{u}+\pairing{x\moduleaction \alpha}{y\moduleaction u}-\rho_{\galgebra}(y)\pairing{x\triangleright u}{\alpha}+\pairing{\alpha}{y\triangleright (x\triangleright u)}\\&=&\pairing{\alpha}{-x\moduleaction(y\moduleaction u)+y\moduleaction (x\moduleaction u)+[x,y]\moduleaction u}=0.
\end{eqnarray*}
Combining  these four facts, we have proved that
\begin{eqnarray}
\label{444} x\circ(\alpha\circ u)-(x\circ \alpha)\circ u-\alpha\circ (x\circ u)=0.
\end{eqnarray}

\noindent$\bullet$
For elements $x,y\in \sections{\galgebra},\xi\in\Gamma(\galgebrastar)$, we have
\begin{eqnarray*}
&&x\circ(y\circ \xi)-(x\circ y)\circ \xi-y\circ (x\circ \xi)\\&=&
x\circ (\ld{y} \xi-\ld{\xi} y+y\vee \xi)-(\ld{[x,y]} \xi-\ld{\xi} [x,y]+[x,y]\vee \xi)-y\circ (\ld{x} \xi-\ld{\xi} x+x\vee \xi)
\\ &=&(\ld{x} \ld{y} \xi-\ld{[x,y]} \xi-\ld{y} \ld{x} \xi)+\big(-\ld{\ld{y}\xi}x-[x,\ld{\xi} y]+\ld{\xi}[x,y]+\ld{\ld{x}\xi}y-[\ld{\xi} x,y]\big)\\ &&+\big(x\vee (\ld{y} \xi)+x\triangleright(y\vee \xi)-[x,y]\vee \xi-y\vee (\ld{x} \xi)-y\triangleright(x\vee \xi)\big).
\end{eqnarray*}
Obviously, the first two terms (the $\Gamma(\galgebra^*), \sections{\galgebra}$ components) vanish due to $[\ld{x},\ld{y}]=\ld{[x,y]}$ and Eqt. \eqref{555}.
We show that the last term, namely the $\sections{\thetaalgebra}$-component, vanishes. In fact, we note that, for any $\alpha\in\Gamma(\thetaalgebra^*)$,
\begin{eqnarray*}
\pairing{x\vee (\ld{y} \xi)}{\alpha}&=&\pairing{\alpha\triangleright x}{\ld{y} \xi} =\rho_{\galgebra}(y)\pairing{\xi}{\alpha\triangleright x}-\pairing{\xi}{[y,\alpha\triangleright x]}.
\end{eqnarray*}
 {By \eqref{Eqt:Vee1}, we have}
\begin{eqnarray*}
\pairing{x\triangleright (y\vee \xi)}{\alpha}&=&\rho_{\galgebra}(x)\pairing{\alpha}{y\vee \xi}-\pairing{x\triangleright \alpha}{y\vee \xi}\\&=&\rho_{\galgebra}(x)\pairing{\xi}{\alpha\triangleright y}-\pairing{\xi}{(x\triangleright \alpha)\triangleright y}.
\end{eqnarray*}

It follows that
\begin{eqnarray*}
&&\pairing{x\vee (\ld{y} \xi)+x\triangleright(y\vee \xi)-[x,y]\vee \xi-y\vee (\ld{x} \xi)-y\triangleright(x\vee \xi)}{\alpha}\\&=&\rho_{\galgebra}(y)\pairing{\xi}{\alpha\triangleright x}-\pairing{\xi}{[y,\alpha\triangleright x]}
+\rho_{\galgebra}(x)\pairing{\xi}{\alpha\triangleright y}-\pairing{\xi}{(x\triangleright \alpha)\triangleright y}\\ &&-\pairing{\xi}{\alpha\triangleright [x,y]}-\rho_{\galgebra}(x)\pairing{\xi}{\alpha\triangleright y}+\pairing{\xi}{[x,\alpha\triangleright y]}
-\rho_{\galgebra}(y)\pairing{\xi}{\alpha\triangleright x}+\pairing{\xi}{(y\triangleright \alpha)\triangleright x}\\ &=&
0,
\end{eqnarray*}
where we have used the fact that $(\galgebra,\theta^*)$ is a matched pair.
We thus proved
\begin{equation}
\label{888} x\circ(y\circ \xi)-(x\circ y)\circ \xi-y\circ (x\circ \xi)=0.
\end{equation}

Now  we have proved the Leibniz identity of the forms \eqref{666}, \eqref{777}, \eqref{444} and \eqref{888}.
Using Conditions \eqref{CA:4}--\eqref{CA:5}, it is easy to see that the Leibniz identity  also holds for different orders of elements involved in the above four cases.
Since the role of $\galgebra$ and $\thetaalgebrastar$ are symmetric, the Leibniz identity actually holds for all possible  choices of elements involved.

In summary, we have proved that $E=A_{\galgebra\triangleright\theta}\oplus A_{\theta^*\triangleright\galgebra^*}$ is a Courant algebroid, and thus $A_{\galgebra\triangleright\theta}$ and $A_{\theta^*\triangleright \galgebra^*}$ compose  a Lie bialgebroid.
This completes the proof of Theorem \ref{Thm:mathchedpairiff}.

\end{proof}

\begin{prop}\label{Prop:invariancy}
 Given two vector bundles
$\thetaalgebra$ and $\galgebra$ over $M$, and a bundle map
$\phi:~\thetaalgebra\lon\galgebra$, we define a pairing
$$
\bookpairing{\tobefilledin,\tobefilledin}:~\Gamma(\galgebrastar) \times \sections{\thetaalgebra} \lon C^\infty(M)
$$by setting
$$
\bookpairing{\xi,u}\defbe
\pairing{\xi}{\phi(u)},\qquad\forall~\xi\in\Gamma(\galgebrastar),u\in
\sections{\thetaalgebra}.
$$

\begin{enumerate}
\item[1)] If  $\galgebra$ is a Lie algebroid and $\thetaalgebra$ is a
$\galgebra$-module, then there is a Lie algebroid crossed module
structure   $\crossedmoduletriple{\thetaalgebra}{\phi}{\galgebra} $
if and only if the following two equalities hold:
\begin{eqnarray}\label{Eqn:t1}
\rho_{\galgebra}(x)\bookpairing{\xi,u}&=&
\bookpairing{\ld{x}^{\galgebra}\xi,u}+\bookpairing{\xi,x\moduleaction u} ,\\
\label{Eqn:t3}
    \bookpairing{\alpha\vee v,u}&=&-\bookpairing{\alpha\vee u,v}
    ,
\end{eqnarray}
for all $x\in
\sections{\galgebra},\alpha\in\Gamma(\thetaalgebrastar),\xi\in\Gamma(\galgebrastar),u,v\in\sections{\thetaalgebra}$.
(The $ \vee $ operator is defined by Eqt. \eqref{Eqt:Vee2}.)

\item[2)] If  $\thetaalgebrastar$ is a Lie algebroid  and
$\galgebrastar$ is a $\thetaalgebrastar$-module, then there is a Lie
algebroid crossed module structure  $\crossedmoduletriple{\galgebrastar}{\phiUprotate
}{\thetaalgebrastar}$ if and only if the following two equalities
hold:
 \begin{eqnarray*}
\rho_{\theta^*}(\alpha)\bookpairing{
\xi,u}&=&\bookpairing{\alpha\moduleaction
\xi,u}+\bookpairing{\xi,\ld{\alpha}^{\theta^*} u},\\
    \bookpairing{\xi,x\vee \eta}&=&-\bookpairing{\eta,x\vee \xi}
    ,
 \end{eqnarray*}
  for all
$x\in
\sections{\galgebra},\alpha\in\Gamma(\thetaalgebrastar),\xi,\eta\in\Gamma(\galgebrastar),u
\in\sections{\thetaalgebra}$. (The $ \vee $ operator is defined by Eqt. \eqref{Eqt:Vee1}.)

\end{enumerate}
\end{prop}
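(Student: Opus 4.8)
The plan is to reduce the statement to Lemma~\ref{Lem:ensentialcrossedmoduleLiealgebra}, which asserts that, for a Lie algebroid $\galgebra$, a $\galgebra$-module $\thetaalgebra$ and a bundle map $\phi$, a Lie algebroid crossed module structure $\crossedmoduletriple{\thetaalgebra}{\phi}{\galgebra}$ exists if and only if the two conditions
\begin{equation*}
\text{(A)}\quad \phi(x\moduleaction u)=\ba{x}{\phi(u)}, \qquad
\text{(B)}\quad \phi(u)\moduleaction v=-\phi(v)\moduleaction u
\end{equation*}
hold, condition (A) being the crossed-module axiom (2) and (B) the antisymmetry forced by axiom (1) together with the prescription $\ba{u}{v}=\phi(u)\moduleaction v$. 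Thus for Part~1) it suffices to prove that (A) is equivalent to \eqref{Eqn:t1} and that (B) is equivalent to \eqref{Eqn:t3}.

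First I would establish (A)$\,\Longleftrightarrow\,$\eqref{Eqn:t1}. Pairing the $\galgebra$-valued identity (A) against an arbitrary $\xi\in\Gamma(\galgebrastar)$ and using the defining relation $\pairing{\ba{x}{\phi(u)}}{\xi}=\rho_{\galgebra}(x)\pairing{\phi(u)}{\xi}-\pairing{\ld{x}^{\galgebra}\xi}{\phi(u)}$ converts (A) into
\begin{equation*}
\bookpairing{\xi,x\moduleaction u}=\rho_{\galgebra}(x)\bookpairing{\xi,u}-\bookpairing{\ld{x}^{\galgebra}\xi,u},
\end{equation*}
which is precisely \eqref{Eqn:t1} after rearrangement; since the canonical pairing between $\galgebra$ and $\galgebrastar$ is nondegenerate, quantifying over all $\xi$ recovers (A). Note that no use of $\rho_{\galgebra}\circ\phi=0$ is made at this stage.

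The key observation, and the step that makes everything else go through, is that (A) by itself forces $\rho_{\galgebra}\circ\phi=0$. Indeed, replacing $x$ by $fx$ in (A) and using that the representation is $C^\infty(M)$-linear in $\galgebra$, so $(fx)\moduleaction u=f(x\moduleaction u)$, while the Lie algebroid bracket obeys $\ba{fx}{\phi(u)}=f\ba{x}{\phi(u)}-(\rho_{\galgebra}(\phi(u))f)\,x$, one obtains $(\rho_{\galgebra}(\phi(u))f)\,x=0$ for all $f$, $x$, $u$; choosing $x$ nonvanishing at a given point gives $\rho_{\galgebra}(\phi(u))=0$. Granting this, I would compute, via the definition \eqref{Eqt:Vee2} of $\vee$ evaluated in the slot $x=\phi(u)$ (respectively $x=\phi(v)$),
\begin{equation*}
\bookpairing{\alpha\vee v,u}+\bookpairing{\alpha\vee u,v}
=-\pairing{\alpha}{\phi(u)\moduleaction v+\phi(v)\moduleaction u},
\end{equation*}
the anchor terms $\rho_{\galgebra}(\phi(\cdot))\pairing{\alpha}{\cdot}$ dropping out precisely because $\rho_{\galgebra}\circ\phi=0$. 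Quantifying over $\alpha\in\Gamma(\thetaalgebrastar)$ then yields \eqref{Eqn:t3}$\,\Longleftrightarrow\,$(B).

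Assembling the pieces proves Part~1): for the forward implication a crossed module supplies (A), (B) and $\rho_{\galgebra}\circ\phi=0$, hence \eqref{Eqn:t1} and \eqref{Eqn:t3}; conversely \eqref{Eqn:t1} gives (A), which produces $\rho_{\galgebra}\circ\phi=0$, whence \eqref{Eqn:t3} gives (B), and Lemma~\ref{Lem:ensentialcrossedmoduleLiealgebra} delivers the crossed module. Part~2) is then obtained by running the identical argument on the dual data, interchanging $\crossedmoduletriple{\thetaalgebra}{\phi}{\galgebra}$ with $\crossedmoduletriple{\galgebrastar}{\phiUprotate}{\thetaalgebrastar}$, so that $\galgebra,\thetaalgebra,\phi$ are replaced by $\thetaalgebrastar,\galgebrastar,\phiUprotate$ and $\vee$ by \eqref{Eqt:Vee1}; the identity $\phiUprotate=-\phi^*$ makes the two pairings $\bookpairing{\tobefilledin,\tobefilledin}$ agree up to sign, so no new computation is required. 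The only genuinely non-routine point is the extraction of $\rho_{\galgebra}\circ\phi=0$ from \eqref{Eqn:t1}: without it \eqref{Eqn:t3} would not collapse to the antisymmetry (B), and the equivalence would fail.
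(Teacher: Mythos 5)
Your proof is correct and takes essentially the same route as the paper: both reduce to Lemma~\ref{Lem:ensentialcrossedmoduleLiealgebra} by showing that \eqref{Eqn:t1} is equivalent to $\phi(x\moduleaction u)=\ba{x}{\phi(u)}$ (pairing against $\xi$) and that \eqref{Eqn:t3} is equivalent to $\phi(u)\moduleaction v=-\phi(v)\moduleaction u$, with your explicit extraction of $\rho_{\galgebra}\circ\phi=0$ supplying a detail that the paper's ``it can be easily seen'' step leaves implicit. One small caveat: your closing claim that the equivalence would fail without first deriving $\rho_{\galgebra}\circ\phi=0$ from \eqref{Eqn:t1} is overstated, since \eqref{Eqn:t3} by itself already forces $\rho_{\galgebra}\circ\phi=0$ (replace $\alpha$ by $f\alpha$ in the unwound identity and compare the $C^\infty(M)$-linearity defect), though this does not affect the correctness of your argument.
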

\begin{proof} We first show that Eqt.  \eqref{Eqn:t1} is equivalent to
\begin{equation*}
\phi ( x\moduleaction u)=\ba{x}{\phi
(u)}.
\end{equation*}
In fact, by definition, we have
\begin{align*}&\rho_{\galgebra}(x)\bookpairing{\xi,u}-\bookpairing{\ld{x}^{\galgebra}\xi,u}-\bookpairing{\xi,x\moduleaction u}\\=&\rho_{\galgebra}(x)\pairing{\xi}{\phi(u)}-\rho_{\galgebra}(x)\pairing{ \xi}{\phi(u)}+\pairing{\xi}{[x,\phi(u)]}-\pairing{\xi}{\phi( x\moduleaction u)}
\\=&\pairing{\xi}{[x,\phi (u)]-\phi ( x\moduleaction u)}.
\end{align*}
It can be easily seen  that Eqt.  \eqref{Eqn:t3} is equivalent to
\begin{equation*}
 {\phi
(u)} \moduleaction v=-{\phi (v)} \moduleaction u ,\quad\forall~
u,v\in \sections{\thetaalgebra}.
\end{equation*}
We thus get the first conclusion, by Lemma \ref{Lem:ensentialcrossedmoduleLiealgebra}. The second is proved analogously.
\end{proof}

\begin{prop}\label{Prop:PQgivesLieBialgebraCrossedModule}
Given a matched pair $(P,Q)$ and suppose that there is a
bilinear pairing
$$
\bookpairing{\tobefilledin,\tobefilledin}:~\Gamma(\Ps) \times \Gamma(\Qs) \lon C^\infty(M)
$$
satisfying the following conditions: for all $x\in \sections{P},\alpha\in
\sections{Q},\xi,\eta\in \Gamma(\Ps),u,v\in\Gamma(\Qs)$,
\begin{align*}
\rho_P(x)\bookpairing{\xi,u}&=\bookpairing{\ld{x}^P\xi,u}+\bookpairing{\xi,x\moduleaction
u},\\
\rho_Q(\alpha)\bookpairing{\xi,u}&=
\bookpairing{\alpha\moduleaction
\xi,u}+\bookpairing{\xi,\ld{\alpha}^Q u},\\
 \bookpairing{\alpha\vee u,v}&=-\bookpairing{\alpha\vee
v ,u},\\
\bookpairing{x\vee \eta,\xi}&=-\bookpairing{x\vee \xi,\eta}.
\end{align*}

Then there associates a Lie bialgebroid crossed module structure on
$\crossedmoduletriple{\Qs }{\phi}{P}$ and
$\crossedmoduletriple{\Ps }{\phi^T}{Q}$ , where $\phi:~\Qs\lon P$ is
determined by \begin{equation*}
\pairing{\xi}{\phi(u)}\defbe
\bookpairing{\xi,u},\quad\forall~  \xi\in \Gamma(\Ps),u\in\Gamma(\Qs).
\end{equation*}
\end{prop}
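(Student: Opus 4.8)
The plan is to avoid any new analytic computation and instead reduce the statement entirely to the two structural results already proved, namely Proposition~\ref{Prop:invariancy} and Theorem~\ref{Thm:mathchedpairiff}. The essential move is to fix the dictionary $\galgebra=P$ and $\thetaalgebra=\Qs$, so that $\galgebrastar=\Ps$ and $\thetaalgebrastar=Q$, and to observe that the bundle map $\phi:\Qs\lon P$ defined by $\pairing{\xi}{\phi(u)}=\bookpairing{\xi,u}$ is precisely the map featuring in Proposition~\ref{Prop:invariancy}. Since $(P,Q)$ is a matched pair, both $P$ and $Q$ are Lie algebroids carrying mutual representations $\moduleaction$; dualizing the $P$-action on $Q$ turns $\thetaalgebra=\Qs$ into a $P$-module, and dualizing the $Q$-action on $P$ turns $\galgebrastar=\Ps$ into a $Q$-module. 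This supplies exactly the module data required by the two halves of Proposition~\ref{Prop:invariancy}.

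First I would produce the two crossed modules. Invoking part (1) of Proposition~\ref{Prop:invariancy} under the above identifications, the first and third hypotheses of the present statement become literally Equations~\eqref{Eqn:t1} and~\eqref{Eqn:t3}, whence $\crossedmoduletriple{\Qs}{\phi}{P}$ is a Lie algebroid crossed module. Invoking part (2) of Proposition~\ref{Prop:invariancy}, the second and fourth hypotheses are exactly the two conditions listed there (reading the pairing in the order $\Gamma(\galgebrastar)\times\sections{\thetaalgebra}$), whence $\crossedmoduletriple{\Ps}{\phiUprotate}{Q}$ is a Lie algebroid crossed module, with $\phiUprotate=-\phi^*$ forced by the duality convention of Definition~\ref{Defn:bi-crossedmoduleofLiealgebra}. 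At this point one has a pair of Lie algebroid crossed modules in duality.

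Next I would apply Theorem~\ref{Thm:mathchedpairiff}. That theorem asserts that a pair of dual crossed modules $\crossedmoduletriple{\thetaalgebra}{\phi}{\galgebra}$ and $\crossedmoduletriple{\galgebrastar}{\phiUprotate}{\thetaalgebrastar}$ assembles into a Lie bialgebroid crossed module if and only if $(\galgebra,\thetaalgebrastar)=(P,Q)$ is a matched pair, where the $P$-action on $Q$ is dual to the $P$-action on $\thetaalgebra=\Qs$ and the $Q$-action on $P$ is dual to the $Q$-action on $\galgebrastar=\Ps$. Because we built the module actions on $\Qs$ and $\Ps$ by dualizing the given matched-pair actions, taking duals a second time returns the original representations of $(P,Q)$; hence the matched pair demanded by Theorem~\ref{Thm:mathchedpairiff} coincides with the one assumed in the hypothesis. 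The theorem's condition is therefore met, and it yields the desired Lie bialgebroid crossed module structure.

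The only delicate point — and the place where care is genuinely needed — is this bookkeeping of representations: one must confirm that the $P$-module structure on $\Qs$ and the $Q$-module structure on $\Ps$ used in Proposition~\ref{Prop:invariancy} are compatible with, and double-dualize back to, the matched-pair actions fed into Theorem~\ref{Thm:mathchedpairiff}, and that the notational symmetry $\bookpairing{a,b}=\bookpairing{b,a}$ implicit in hypotheses~(3) and~(4) matches the orientation of the pairing in Proposition~\ref{Prop:invariancy}. Once these identifications are made explicit, the statement follows formally, with the two cited results carrying all of the analytic content.
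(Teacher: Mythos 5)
Your proposal is correct and follows essentially the same route as the paper's own proof: identify $\galgebra=P$, $\thetaalgebra=\Qs$, match the four hypotheses with the conditions of Proposition~\ref{Prop:invariancy} to obtain the two dual Lie algebroid crossed modules, and then invoke Theorem~\ref{Thm:mathchedpairiff} together with the given matched-pair structure on $(P,Q)$ to conclude. Your extra care about the double-dualization of the representations and the order convention in the pairing is exactly the bookkeeping the paper leaves implicit in its terse argument.
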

\begin{proof}Let us denote $\Qs=\thetaalgebra$ and $P=\galgebra$.
Hence $\galgebra$ acts on $\thetaalgebra$ and $\thetaalgebrastar$
acts on $\galgebra^*$. The four conditions exactly match those in Proposition \ref{Prop:invariancy}. Then by Theorem
\ref{Thm:mathchedpairiff}, the conclusion is clear.
\end{proof}

\begin{proof}[Proof of Proposition \ref{Prop:manin3}] We   show that  the $L$-invariance condition Eqt.  \eqref{Eqt:bookpairinginvariant}
is equivalent to  the four conditions in Proposition \ref{Prop:PQgivesLieBialgebraCrossedModule}. Then the
conclusion follows immediately.

Under the decompositions
$$
K=P\oplus Q,\qquad   K^*= Q^0\oplus P^0,
$$
we can prove the following two identities
\begin{eqnarray*}  \ld{x}^{K}(\xi+u)
&=&\ld{x}^{P}\xi+(x\vee\xi+x\moduleaction u),\\
\ld{\alpha}^{K}(\xi+u) &=& (\alpha \moduleaction
\xi+\alpha\vee u)+\ld{\alpha}^{Q}u
\end{eqnarray*}
for all $x\in \sections{P},\alpha\in \sections{Q}$, $\xi\in \Gamma(Q^0),u\in  \Gamma(P^0)$.  {Here $x\vee\xi\in \sections{P^0}$ is defined by $$\pairing{x\vee\xi}{\beta}=\pairing{\xi}{\beta\triangleright x}$$ and $x\triangleright u$ comes from the dual action of $P$ on $Q$.}
  {In fact,  for any $y\in \sections{P}$, $\beta\in \sections{Q}$,
\begin{eqnarray*}
\pairing{\ld{x}^{K}\xi}{y+\beta}=\rho_{P}(x)\xi(y)-\pairing{\xi}{[x,y]+x\triangleright \beta-\beta\triangleright x}
=\pairing{\ld{x}^{P}\xi+x\vee\xi}{y+\beta}.
\end{eqnarray*}
and
$$ \pairing{\ld{x}^K u}{y+\beta}=\rho_{P}(x)\beta(u)-\pairing{u}{x\triangleright \beta}=\pairing{x\triangleright u}{\beta}.$$
}
Thus the equivalence between Eqt.
\eqref{Eqt:bookpairinginvariant} and  the four equalities in Proposition \ref{Prop:PQgivesLieBialgebraCrossedModule} is obvious.
\end{proof}

Now, we can complete the proof of Proposition \ref{Prop:manin3 reverse}.

\begin{proof}[Proof of Proposition \ref{Prop:manin3 reverse}]
 {By Theorem \ref{Thm:mathchedpairiff}, we have that $(\galgebra,\theta^*)$ is a matched pair, so  $K=\galgebra\bowtie \thetaalgebrastar$ is a Lie algebroid. Moreover, by Proposition \ref{Prop:invariancy} and
the the proof of Proposition
\ref{Prop:manin3}, we get the $K$-invariance of $\bookpairing{\tobefilledin,\tobefilledin}$. The fact that  $\galgebra$ and $\theta^*$ are transverse Dirac structures of $K$ is obvious.
}
\end{proof}



 \begin{bibdiv}
\begin{biblist}
\bib{0501103}{article}{
   author={Androulidakis, Iakovos},
   title={Crossed modules and the integrability of Lie brackets},
   review={arXiv: math.DG/0501103},
}
 {
\bib{Brown}{article}{
   author={Brown, R},
   title={Higher dimensional group theory, in Low Dimensional
Topology},
journal={Lond. Math. Soc. Lect. Notes},
   volume={48},
   date={1982},
  pages={215--238},
   }
}
 {
\bib{Brown1}{article}{
   author={Brown, R},
    author={Huebschmann, J.},
   title={Identities among relations, in
Low Dimentional Topology,},
journal={Lond. Math. Soc. Lect. Notes},
   volume={48},
   date={1982},
  pages={153--202 },
   }
}
 {
\bib{BHS}{article}{
   author={Brown, R},
    author={Higgins, P.J.},
    author={Sivera, R.},
   title={Nonabelian Algebraic Topology: Filtered Sapaces, Crossed
Complexes, Cubical Homotopy Groupoids},
journal={European Mathematical Society Tracts in
Mathematics},
   volume={15},
   date={2011},
   }
}
 {
\bib{BS}{article}{
   author={Brown, R},
    author={Spencer, C.B.},
   title={G-groupoids, crossed modules and the fundamental groupoid of a
topological group},
journal={Proc. Konn. Ned. Akad. v. Wet.},
   volume={79},
   date={1976},
   pages={296--302}
   }
}


\bib{MR3035114}{article}{
   author={Chen, Zhuo},
   author={Sti{\'e}non, Mathieu},
   author={Xu, Ping},
   title={Weak Lie 2-bialgebras},
   journal={J. Geom. Phys.},
   volume={68},
   date={2013},
   pages={59--68},
   issn={0393-0440},
}

\bib{MR3080481}{article}{
   author={Chen, Zhuo},
   author={Sti{\'e}non, Mathieu},
   author={Xu, Ping},
   title={Poisson 2-groups},
   journal={J. Differential Geom.},
   volume={94},
   date={2013},
   number={2},
   pages={209--240},
   issn={0022-040X},
}

\bib{MR3022918}{article}{
   author={Chen, Zhuo},
   author={Sti\'{e}non, Mathieu},
   author={Xu, Ping},
   title={On regular Courant algebroids},
   journal={J. Symplectic Geom.},
   volume={11},
   date={2013},
   number={1},
   pages={1--24},
   issn={1527-5256},
}

\bib{MR3439229}{article}{
   author={Chen, Zhuo},
   author={Sti\'{e}non, Mathieu},
   author={Xu, Ping},
   title={From Atiyah classes to homotopy Leibniz algebras},
   journal={Comm. Math. Phys.},
   volume={341},
   date={2016},
   number={1},
   pages={309--349},
   issn={0010-3616},
   review={\MR{3439229}},
}

\bib{MR998124}{article}{
  author={Courant, Theodore James},
  title={Dirac manifolds},
  journal={Trans. Amer. Math. Soc.},
  volume={319},
  date={1990},
  number={2},
  pages={631--661},
  issn={0002-9947},
}


\bib{MR688240}{article}{
   author={Drinfel{\cprime}d, V. G.},
   title={Hamiltonian structures on Lie groups, Lie bialgebras and the
   geometric meaning of classical Yang-Baxter equations},
   language={Russian},
   journal={Dokl. Akad. Nauk SSSR},
   volume={268},
   date={1983},
   number={2},
   pages={285--287},
   issn={0002-3264},
}

\bib{MR934283}{article}{
   author={Drinfel{\cprime}d, V. G.},
   title={Quantum groups},
   conference={
      title={Proceedings of the International Congress of Mathematicians,
      Vol. 1, 2 },
      address={Berkeley, Calif.},
      date={1986},
   },
   book={
      publisher={Amer. Math. Soc., Providence, RI},
   },
   date={1987},
   pages={798--820},
}

 {
\bib{Gers}{article}{
   author={Gerstenhaber, M.},
   title={A uniform cohomology theory for algebras},
   journal={Pros. Nat. Acad. Sci. U.S.A.},
   volume={51},
   date={1964},
   pages={626--629},
}}
\bib{MR0207793}{article}{
   author={Gerstenhaber, Murray},
   title={On the deformation of rings and algebras. II},
   journal={Ann. of Math.},
   volume={84},
   date={1966},
   pages={1--19},
   issn={0003-486X},
}

 {
\bib{Hue}{article}{
   author={Huebschmann, J.},
   title={Crossed n-fold extensions of groups and cohomology},
   journal={Comment. Math.
Helvetici},
   volume={55},
   date={1980},
   pages={302--314},
}}

\bib{MR1362125}{article}{
  author={Kosmann-Schwarzbach, Yvette},
  title={Exact Gerstenhaber algebras and Lie bialgebroids},
  journal={Acta Appl. Math.},
  volume={41},
  date={1995},
  number={1-3},
  pages={153--165},
  issn={0167-8019},
}

\bib{MR2415176}{article}{
   author={Laurent-Gengoux, Camille},
   author={Wagemann, Friedrich},
   title={Obstruction classes of crossed modules of Lie algebroids and Lie
   groupoids linked to existence of principal bundles},
   journal={Ann. Global Anal. Geom.},
   volume={34},
   date={2008},
   number={1},
   pages={21--37},
   issn={0232-704X},
}

\bib{MR2507112}{article}{
   author={Li-Bland, David},
   author={Meinrenken, Eckhard},
   title={Courant algebroids and Poisson geometry},
   journal={Int. Math. Res. Not. IMRN},
   date={2009},
   number={11},
   pages={2106--2145},
   issn={1073-7928},
}

\bib{MR1371234}{article}{
  author={Liu, Z. J.},
  author={Xu, P.},
  title={Exact Lie bialgebroids and Poisson groupoids},
  journal={Geom. Funct. Anal.},
  volume={6},
  date={1996},
  number={1},
  pages={138--145},
  issn={1016-443X},
}
\bib{MR1472888}{article}{
  author={Liu, Z. J.},
  author={Weinstein, A.},
  author={Xu, Ping},
  title={Manin triples for Lie bialgebroids},
  journal={J. Differential Geom.},
  volume={45},
  date={1997},
  number={3},
  pages={547--574},
  issn={0022-040X},
}
 {
\bib{Loday}{article}{
   author={Loday, J. L.},
   title={Cohomologie et groupes de Steinberg relatifs},
   journal={J. Algebra},
   volume={54},
   date={1978},
   pages={178--202},
}}
 {
\bib{Loday1}{article}{
   author={Loday, J. L.},
   title={Spaces with finitely many non-trivial homotopy groups},
   journal={J. Pure Appl. Algebra},
   volume={24},
   date={1982},
   pages={179--202},
}}
\bib{MR1430434}{article}{
   author={Lu, Jiang-Hua},
   title={Poisson homogeneous spaces and Lie algebroids associated to
   Poisson actions},
   journal={Duke Math. J.},
   volume={86},
   date={1997},
   number={2},
   pages={261--304},
   issn={0012-7094},
}

 \bib{MR2685337}{book}{
   author={Lu, Jiang-Hua},
   title={Multiplicative and affine Poisson structures on Lie groups},
   note={Thesis (Ph.D.)--University of California, Berkeley},
   publisher={ProQuest LLC, Ann Arbor, MI},
   date={1990},
   pages={74},
}

 {
\bib{Lue}{article}{
   author={Lue, A.S.T.},
   title={Cohomology of groups relative to a variety},
   journal={J. Algebra},
   volume={69},
   date={1981},
   pages={155--174},
}
}


\bib{MR1746902}{article}{
   author={Mackenzie, Kirill C. H.},
   author={Xu, Ping},
   title={Integration of Lie bialgebroids},
   journal={Topology},
   volume={39},
   date={2000},
   number={3},
   pages={445--467},
   issn={0040-9383},
}

\bib{MR2157566}{book}{
   author={Mackenzie, Kirill C. H.},
   title={General theory of Lie groupoids and Lie algebroids},
   series={London Mathematical Society Lecture Note Series},
   volume={213},
   publisher={Cambridge University Press, Cambridge},
   date={2005},
   pages={xxxviii+501},
   isbn={978-0-521-49928-3},
   isbn={0-521-49928-3},
}
\bib{MR1716681}{article}{
   author={Mackenzie,  Kirill C. H.},
   author={Mokri, T.},
   title={Locally vacant double Lie groupoids and the integration of matched
   pairs of Lie algebroids},
   journal={Geom. Dedicata},
   volume={77},
   date={1999},
   number={3},
   pages={317--330},
   issn={0046-5755},
}

\bib{MR1262213}{article}{
  author={Mackenzie, Kirill C. H.},
  author={Xu, Ping},
  title={Lie bialgebroids and Poisson groupoids},
  journal={Duke Math. J.},
  volume={73},
  date={1994},
  number={2},
  pages={415--452},
  issn={0012-7094},
}

\bib{MR2996854}{article}{
   author={Mehta, Rajan Amit},
   author={Zambon, Marco},
   title={$L_\infty$-algebra actions},
   journal={Differential Geom. Appl.},
   volume={30},
   date={2012},
   number={6},
   pages={576--587},
   issn={0926-2245},
}


\bib{MR1460632}{article}{
   author={Mokri, Tahar},
   title={Matched pairs of Lie algebroids},
   journal={Glasgow Math. J.},
   volume={39},
   date={1997},
   number={2},
   pages={167--181},
   issn={0017-0895},
}


\bib{MR2699145}{book}{
   author={Roytenberg, Dmitry},
   title={Courant algebroids, derived brackets and even symplectic
   supermanifolds},
   note={Thesis (Ph.D.)--University of California, Berkeley},
   publisher={ProQuest LLC, Ann Arbor, MI},
   date={1999},
   pages={103},
   isbn={978-0599-31598-3},
}

\bib{Whitehead1}{article}{
   author={Whitehead, J. H. C.},
   title={Note on a previous paper entitled ``On adding relations to
   homotopy groups.''},
   journal={Ann. of Math. (2)},
   volume={47},
   date={1946},
  pages={806--810},
   issn={0003-486X},
}

\bib{Whitehead2}{article}{
   author={Whitehead, J. H. C.},
   title={Combinatorial homotopy. II},
  journal={Bull. Amer. Math. Soc.},
   volume={55},
   date={1949},
   pages={453--496},
   issn={0002-9904},
}
\bib{MR725413}{article}{
   author={Semenov-Tyan-Shanski{\u\i}, M. A.},
   title={What a classical $r$-matrix is},
   language={Russian},
   journal={Funktsional. Anal. i Prilozhen.},
   volume={17},
   date={1983},
   number={4},
   pages={17--33},
   issn={0374-1990},
}
\bib{MR1675117}{article}{
   author={Xu, Ping},
   title={Gerstenhaber algebras and BV-algebras in Poisson geometry},
   journal={Comm. Math. Phys.},
   volume={200},
   date={1999},
   number={3},
   pages={545--560},
   issn={0010-3616},
}

\end{biblist}
 \end{bibdiv}

\end{document}